\theoremstyle{plain}
\newtheorem{theorem}{\bf Theorem}[section]
\newtheorem{claim}[theorem]{\bf Claim}
\newtheorem{remark}[theorem]{\bf Remark}
\newtheorem{problem}[theorem]{\bf Problem}
\newtheorem{conjecture}[theorem]{\bf Conjecture}
\newtheorem{proposition}[theorem]{\bf Proposition}
\newtheorem{corollary}[theorem]{\bf Corollary}
\newtheorem{lemma}[theorem]{\bf Lemma}
\theoremstyle{definition}
\newtheorem{definition}[theorem]{\bf Definition}
\newcommand{\Rea}{{\mathbb R}}
\DeclareMathOperator{\rank}{\text{rank}}
\newcommand{\lfrac}[2]{\left\lfloor\frac{#1}{#2}\right\rfloor}
\newcommand{\ufrac}[2]{\left\lceil\frac{#1}{#2}\right\rceil}
\newcommand{\bv}{\mathbf v}
\newcommand{\px}{p^*}
	\title{Rigidity expander graphs}
\author[2]{Alan Lew\thanks{\href{mailto:alanlew@andrew.cmu.edu}{alanlew@andrew.cmu.edu}. Alan Lew was partially supported by the Israel Science Foundation grant ISF-2480/20.}}
\author[1]{Eran Nevo\thanks{\href{mailto:nevo@math.huji.ac.il}{nevo@math.huji.ac.il}. Eran Nevo was partially supported by the Israel Science Foundation grant ISF-2480/20.}}
\author[1]{Yuval Peled\thanks{\href{mailto:yuval.peled@mail.huji.ac.il}{yuval.peled@mail.huji.ac.il}.}}
\author[1]{Orit E. Raz\thanks{\href{mailto:oritraz@mail.huji.ac.il}{oritraz@mail.huji.ac.il}.}}
\affil[1]{Einstein Institute of Mathematics,
 Hebrew University, Jerusalem~91904, Israel}
 \affil[2]{Dept. Math. Sciences, Carnegie Mellon University, Pittsburgh, PA 15213, USA}
	\date{}
\begin{document}

	\maketitle

\begin{abstract}
Jord\'an and Tanigawa recently introduced the $d$-dimensional algebraic connectivity $a_d(G)$ of a graph $G$. This is a quantitative measure of the $d$-dimensional rigidity of $G$ which generalizes the well-studied notion of spectral expansion of graphs. We present a new lower bound for $a_d(G)$ defined in terms of the spectral expansion of certain subgraphs of $G$ associated with a partition of its vertices into $d$ parts. In particular, we obtain a new sufficient condition for the rigidity of a graph $G$. 
As a first application, we prove the existence of an infinite family of $k$-regular $d$-rigidity-expander graphs for every $d\ge 2$ and $k\ge 2d+1$. Conjecturally, no such family of $2d$-regular graphs exists. Second, we show that $a_d(K_n)\geq \frac{1}{2}\lfrac{n}{d}$, which we conjecture to be essentially tight. In addition, we study the extremal values $a_d(G)$ attained if $G$ is a minimally $d$-rigid graph.
\end{abstract}

\section{Introduction}

Graph expansion is one of the most influential concepts in modern graph theory, with numerous applications in discrete mathematics and computer science (see ~\cite{hoory2006expander,lubotzky2012expander}). Intuitively speaking, an expander is a ``highly-connected" graph, and a standard way to quantitatively measure the connectivity, or expansion, of a graph uses the spectral gap in its Laplacian matrix. A main theme in the study of expander graphs deals with the construction of sparse expanders. In particular, bounded-degree regular expander graphs have been studied extensively in various areas of mathematics ~\cite{gabber1981explicit,LPS,zigzag,friedman_alon,MSS}. This paper studies a generalization of spectral graph expansion that was recently introduced by Jord\'an and Tanigawa via the theory of graph rigidity~\cite{jordan2022rigidity}.

A $d$-dimensional framework is a pair $(G,p)$ consisting of a graph $G=(V,E)$ and a map $p:V\to \Rea^d$. The framework is called $d$-rigid if every continuous motion of the vertices starting from $p$ that preserves the distance between every two adjacent vertices in $G$, also preserves the distance between \emph{every pair} of vertices; see e.g.~\cite{Connelly:RigiditySurvey, graver1993book} for background on framework rigidity. 

Asimow and Roth showed in \cite{AR1} that if the map $p$ is generic (e.g. if the $d|V|$ coordinates of $p$ are algebraically independent over the rationals), then the framework rigidity of $(G,p)$ does not depend on the map $p$. Moreover, they showed that for a generic $p$, rigidity coincides with the following stronger linear-algebraic notion of infinitesimal rigidity. 

For every  $u,v\in V$ we define $d_{uv}\in \Rea^d$ by
\[
    d_{u v}=\begin{cases}
           \frac{p(u)-p(v)}{\|p(u)-p(v)\|} & \text{ if } p(u)\neq p(v),\\
            0 & \text{ otherwise,}
    \end{cases}
\]
and $\bv_{u,v}:= (1_u-1_v)\otimes d_{u v}\in \Rea^{d|V|}$,
where $\{1_u\}_{u\in V}$ is the standard basis of $\Rea^{|V|}$ and $\otimes$ denotes the Kronecker product.
 Equivalently,
\[
   \bv_{u,v} ^T= \kbordermatrix{
     & &  &  & u & & & & v & & & \\
     & 0 & \ldots & 0 & d_{u v}^T & 0 & \ldots & 0 & d_{v u}^T & 0 & \ldots & 0}.
\]
 The (normalized) \emph{rigidity matrix} $R(G,p)\in \Rea^{d|V|\times |E|}$ is the matrix whose columns are the vectors $\bv_{u,v}$ for all $\{u,v\}\in E$.
We always assume that the image $p(V)$ does not lie on any affine hyperplane in $\Rea^d$. In such a case, it is possible to show (see \cite{AR1}) that $\rank(R(G,p))\leq d|V|-\binom{d+1}{2}$. The framework $(G,p)$ is called \emph{infinitesimally rigid} if this bound is attained, that is, if $\rank(R(G,p))= d|V|-\binom{d+1}{2}$.

A graph $G$ is called \emph{rigid in $\Rea^d$}, or \emph{$d$-rigid}, if it is
infinitesimally rigid with respect to some map $p$ (or, equivalently, if it is infinitesimally rigid for all generic maps~\cite{AR1}).

For $d=1$ and an injective map $p:V\to \Rea^d$, the rigidity matrix $R(G,p)$ is equal to the incidence matrix of $G$, hence both notions of rigidity coincide with graph connectivity. One can extend this analogy and define a higher dimensional version of the graph's Laplacian matrix, that is called the \emph{stiffness matrix} of $(G,p)$, and is defined by
\[
    L(G,p)=R(G,p) R(G,p)^T \in \Rea^{d|V|\times d|V|}.
\] 
We denote by $\lambda_i(A)$ the $i$-th smallest eigenvalue of a symmetric matrix $A$.
Since $\rank(L(G,p))=\rank(R(G,p))\le d|V|-\binom{d+1}{2}$, the kernel of $L(G,p)$ is of dimension at least $\binom{d+1}{2}$. Therefore, $\lambda_{\binom{d+1}{2}+1}(L(G,p))\neq 0$ if and only if $(G,p)$ is infinitesimally rigid. 

In \cite{jordan2022rigidity}, Jord\'an and Tanigawa defined the \emph{$d$-dimensional algebraic connectivity} of $G$, $a_d(G)$, as
\[
a_d(G)=\sup\left\{ \lambda_{\binom{d+1}{2}+1}(L(G,p)) \middle| \, p: V\to \Rea^d\right\}.
\]
For $d=1$, $L(G,p)$ coincides with the graph Laplacian matrix $L(G)$, and $a_1(G)=a(G)$ is the usual algebraic connectivity, or Laplacian spectral gap, of $G$, introduced by Fiedler in \cite{fiedler1973algebraic}. For every $d\ge 1$, $a_d(G) \ge 0$ since $L(G,p)$ is positive semi-definite, and $a_d(G)>0$ if and only if $G$ is $d$-rigid.

The following notion of \emph{rigidity expander graphs} extends the classical notion of (spectral) expander graphs, corresponding to the $d=1$ case:

\begin{definition}\label{def:expander}
Let $d\geq 1$. A family of graphs $\{G_i\}_{i\in\mathbb{N}}$ of increasing size is called a \emph{family of $d$-rigidity expander graphs} if there exists $\epsilon>0$ such that $a_d(G_i)\geq \epsilon$ for all $i\in\mathbb{N}$.
\end{definition}

It is well known that, for every $k\geq 3$, there exist families of $k$-regular ($1$-dimensional) expander graphs (see e.g. \cite{hoory2006expander}).
Our main result is an extension of this fact to general $d$:

\begin{theorem}\label{thm:rigidity_expanders}
Let $d\geq 1$ and $k\ge 2d+1$. Then, there exists an infinite family of 
$k$-regular $d$-rigidity expander graphs.
\end{theorem}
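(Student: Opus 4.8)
The plan is to derive Theorem~\ref{thm:rigidity_expanders} from the partition-based lower bound on $a_d(G)$ advertised in the abstract, whose shape I expect to be the following: for any partition $V=V_1\sqcup\cdots\sqcup V_d$ of the vertex set there is a constant $c(d)>0$, depending only on $d$, with $a_d(G)\ge c(d)\cdot\Phi$, where $\Phi$ is the minimum of the one-dimensional spectral gaps $a_1$ of a family of subgraphs of $G$ read off from the partition. Granting such a bound, it suffices to produce, for every $k\ge 2d+1$, an infinite family of $k$-regular graphs together with a $d$-partition for which all the relevant subgraphs are spectral expanders with a gap bounded below independently of the number of vertices; the resulting uniform lower bound on $\Phi$ then yields a uniform $\epsilon$ in Definition~\ref{def:expander}.

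To set up the lower bound I would choose the map $p$ adapted to the partition, associating each part $V_j$ with the $j$-th coordinate direction. Concretely I would place the vertices of $V_j$ so that the edges internal to $V_j$ realize a lower-dimensional rigid configuration that controls most of the coordinates of the motions of $V_j$, while the edges leaving $V_j$ are responsible for pinning the remaining directions. With such a choice the stiffness matrix $L(G,p)=R(G,p)R(G,p)^T$ splits, up to lower-order terms, into diagonal blocks given by the Laplacians of the subgraphs attached to each part, plus off-diagonal blocks coming from the crossing edges. After quotienting out the $\binom{d+1}{2}$-dimensional space of trivial (infinitesimal rigid) motions, estimating $\lambda_{\binom{d+1}{2}+1}(L(G,p))$ becomes a coercivity statement for this coupled operator, with the constant governed by the spectral gaps of the blocks.

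I expect the construction itself to be recursive, with the degree budget increasing by exactly two per dimension. The base case $d=1$ is a $3$-regular spectral expander placed injectively on a line, where $L(G,p)$ is the ordinary graph Laplacian and $a_1(G)=a(G)$ is bounded below by classical constructions (random regular graphs or explicit Ramanujan families). For the inductive step I would promote a $(2d-1)$-regular $(d-1)$-rigidity expander to a $(2d+1)$-regular $d$-rigidity expander by adjoining two edges per vertex that supply the new coordinate direction and glue the configuration together; applying the partition bound with the two blocks being the old configuration and the freshly added direction reduces the step to the previously established gap together with the expansion of the new edges. Since $c$-regular expanders with a uniform spectral gap exist for every fixed $c\ge 3$, each layer can be taken to expand uniformly, and the bookkeeping $2d+1=3+2(d-1)$ is precisely what lands the total degree on the threshold. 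The same count heuristically explains the complementary conjecture: at total degree $2d$ there is one edge per vertex too few to simultaneously expand a part and pin the extra direction, so no uniform gap should survive.

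The hard part will be the coercivity estimate for the coupled operator, that is, the lower bound lemma rather than the combinatorial assembly. The genuine difficulty is that the $d$ coordinate slices interact through the crossing edges, so one cannot simply add up $d$ independent one-dimensional gaps: the component of a motion of a vertex of $V_j$ in a direction $e_i$ with $i\ne j$ is not seen by the internal expander of $V_j$ and must be controlled entirely through the sparse crossing edges, and the rotational trivial motions live exactly in this transverse space and must be separated cleanly before any gap can be extracted. I would address this by keeping the point positions as free parameters and choosing them generically, so that the off-diagonal coupling blocks have full rank and the only near-kernel of the combined quadratic form is the space of trivial motions; the size-independence of the resulting $\epsilon$ then propagates from the uniform gaps of the expander ingredients. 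A final routine point is to verify that the chosen $p$ is admissible, namely that $p(V)$ is not contained in an affine hyperplane, which is automatic once the $d$ parts occupy $d$ affinely independent directions.
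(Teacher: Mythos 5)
Your high-level strategy --- partition $V$ into $d$ parts, lower-bound $a_d(G)$ by the one-dimensional spectral gaps of the induced and crossing subgraphs, then realize those subgraphs as expanders --- is exactly the paper's strategy; the coercivity lemma you anticipate is Theorem~\ref{thm:lower_bound_general_d}, proved by sending the $d$ parts to the vertices of a blown-up regular simplex and analysing the limiting lower stiffness matrix. But there is a genuine gap in the combinatorial assembly at the critical degree $k=2d+1$, and it is precisely where the paper has to work hardest. A vertex $v\in A_i$ lies in $d$ of the $\binom{d+1}{2}$ subgraphs that the partition bound requires to be spanning expanders: $G[A_i]$ and the $d-1$ crossing graphs $G(A_i,A_j)$. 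With total degree $2d+1$ it can contribute on average only about two edges to each of them. A spanning subgraph in which almost every vertex has degree $2$ is a union of paths and cycles, whose spectral gap is $\Theta(1/n^2)$, not a constant; and your fallback that ``$c$-regular expanders exist for every fixed $c\ge 3$'' is unavailable, since making all $d$ subgraphs through $v$ three-regular would force $\deg(v)=3d>2d+1$. (That easier regime, $k\ge 3d$, is exactly the one the paper notes can be handled with off-the-shelf Ramanujan graphs.) The missing idea is Theorem~\ref{thm:subdivided_spectral_gap} together with Corollary~\ref{cor:expanders_with_many_degree_2_vertices}: subdividing each edge of a $3$-regular bipartite expander with $O(d)$ new vertices yields a graph with a small fraction of degree-$3$ vertices, the rest of degree $2$, whose spectral gap is still bounded below by a constant $c(d)>0$. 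Only with these ``mostly degree-$2$'' expanders as building blocks does the degree bookkeeping close at $2d+1$.

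Two further points. Your recursive scheme (promote a $(2d-1)$-regular $(d-1)$-rigidity expander by adding two edges per vertex) does not balance: the new vertices of the added part $A_d$ must reach all $d-1$ old parts and also span an internal expander, so their degree increment is not two; the paper instead builds all $\binom{d+1}{2}$ subgraphs in one shot on a $d\times d$ grid of blocks $B_{i,j}$, with the scarce degree-$3$ vertices of each subdivided expander placed in the appropriate block. And your treatment of the lower-bound lemma (generic positions making the off-diagonal coupling blocks full rank) would at best give positivity of $a_d(G)$, not a size-independent quantitative bound; the paper needs the degenerate limiting configuration, in which the quadratic form decouples into the $1$-dimensional stiffness matrices of the subgraphs after subtracting an explicit positive semidefinite correction, to extract a uniform $\epsilon$.
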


It was conjectured by Jord\'an and Tanigawa that families of $2d$-regular $d$-rigidity expanders do not exist
(see \cite[Conj. 2]{jordan2022rigidity} for the statement in the $d=2$ case, and see \cite[Conj. 6.2]{lew2022d} for the general case), and clearly families of $k$-regular $d$-rigidity expanders do not exist for $k<2d$ since, for $n$ large enough, such graphs have less than $dn-\binom{d+1}{2}$ edges, and are therefore not even $d$-rigid. Therefore, assuming this conjecture, our result is sharp.

Our main tool for the proof of Theorem \ref{thm:rigidity_expanders} is a new lower bound on $a_d(G)$, given in terms of the ($1$-dimensional) algebraic connectivity of certain subgraphs of $G$ associated with a partition of its vertex set into $d$ parts. For convenience, we let $a(G)=\infty$ if $G$ consists of a single vertex.

Let $G=(V,E)$ be a graph, and let $A,B\subset V$ be two disjoint sets. We denote by $G[A]$ the subgraph of $G$ induced on $A$, and by $G(A,B)$ the subgraph of $G$ with vertex set $A\cup B$ and edge set $E(A,B)=\{e\in E:\, |e\cap A|=|e\cap B|=1\}$. In addition, we say that a partition $V=A_1\cup\cdots\cup A_d$ is \emph{non-trivial} if $A_i\neq \emptyset$ for all $i=1,\ldots,d$. 

\begin{theorem}\label{thm:lower_bound_general_d}
Let $d\ge 2$. For every graph $G=(V,E)$ and a non-trivial partition $V=A_1\cup \cdots\cup A_d$ there holds
\[
a_d(G)\geq \min\left(\bigg\{a(G[A_i]) \bigg\}_{1\leq i\leq d} \bigcup\left\{\frac{1}{2}a(G(A_i,A_j))\right\}_{1\leq i<j\leq d}\right).
\]
In particular, if $G[A_i]$ is connected for all $i\in[d]$ and $G(A_i,A_j)$ is connected for all $1\leq i< j\leq d$, then $G$ is $d$-rigid.
\end{theorem}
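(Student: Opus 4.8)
The plan is to lower-bound $a_d(G)=\sup_p \lambda_{\binom{d+1}{2}+1}(L(G,p))$ by exhibiting one configuration $p$ tailored to the partition and estimating its spectral gap. Since $a_d(G)\ge \lambda_{\binom{d+1}{2}+1}(L(G,p))$ for every $p$, and since by the Courant--Fischer min-max principle $\lambda_{\binom{d+1}{2}+1}(L(G,p))\ge \min_{0\ne z\perp U} \frac{z^TL(G,p)z}{\|z\|^2}$ for any subspace $U\subseteq \Rea^{d|V|}$ with $\dim U=\binom{d+1}{2}$, it suffices to produce such a $p$ and such a $U$ for which $z^TL(G,p)z\ge \lambda\,\|z\|^2$ holds on $U^{\perp}$, where $\lambda$ denotes the right-hand side of the theorem. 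I would build $p$ from the partition by placing the vertices of each part $A_i$ along the $i$-th coordinate axis, say $p(v)=t_v e_i$ for $v\in A_i$; the point of this choice is that it makes the three families of terms $a(G[A_i])$ and $\tfrac12 a(G(A_i,A_j))$ appear naturally, as explained next.

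Recalling that $z^TL(G,p)z=\sum_{\{u,v\}\in E}\langle d_{uv},\,z_u-z_v\rangle^2$, I would split the edge sum according to the partition. An edge inside $A_i$ has direction $d_{uv}=\pm e_i$, so it contributes exactly $((z_u)_i-(z_v)_i)^2$; summing over $E(G[A_i])$ yields the $1$-dimensional Laplacian form of $G[A_i]$ applied to the $i$-th coordinates, which is bounded below by $a(G[A_i])$ times the (centred) squared norm of those coordinates. For an edge between $A_i$ and $A_j$, when the parts are concentrated the direction tends to $(e_i-e_j)/\sqrt2$, so the contribution tends to $\tfrac12\big(f_{ij}(u)-f_{ij}(v)\big)^2$ with $f_{ij}(w):=(z_w)_i-(z_w)_j$; summing over $E(A_i,A_j)$ gives $\tfrac12$ times the Laplacian form of $G(A_i,A_j)$ applied to $f_{ij}$, bounded below by $\tfrac12 a(G(A_i,A_j))$ times a centred norm. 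This is exactly where the factor $\tfrac12$ in the statement originates, and it explains the shape of the claimed bound.

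For the subspace $U$ I would take the space of \emph{trivial infinitesimal motions}: the $d$ translations $\tau^{(k)}$ (with $\tau^{(k)}_v=e_k$) together with the $\binom{d}{2}$ infinitesimal rotations $\rho^{(kl)}$ of the configuration. A direct check shows $\dim U=d+\binom{d}{2}=\binom{d+1}{2}$, with the non-triviality of the partition ($A_i\ne\emptyset$ for all $i$) guaranteeing that the rotation vectors are present and independent; moreover $U$ lies in the kernel of the stiffness form, so this is a legitimate (and, at a rigid configuration, tight) choice in the min-max bound. Once $z^TL(G,p)z\ge \lambda\|z\|^2$ is established on $U^{\perp}$, the ``in particular'' statement is immediate: if every $G[A_i]$ and every $G(A_i,A_j)$ is connected then each $a(G[A_i])$ and each $a(G(A_i,A_j))$ is strictly positive, hence $\lambda>0$, hence $a_d(G)>0$, which is equivalent to $d$-rigidity.

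The hard part is the quadratic-form inequality on $U^{\perp}$, and the real difficulty lies in the \emph{off-diagonal} (transverse) coordinates: for $v\in A_i$ the coordinate $(z_v)_k$ with $k\ne i$ is seen by no edge inside $A_i$ and is constrained only through the cross terms $f_{ik}$. A configuration in which the parts are fully collapsed onto the points $e_i$ under-penalises these transverse motions---one can then write down a vector $z\perp U$ whose energy comes from a single within-part Laplacian while its norm also counts a free transverse component, losing a factor of two. The resolution is to keep the parts genuinely spread along their axes (equivalently, to symmetrise over the positions $t_v$), so that cross edges of varying directions do control the transverse coordinates; quantitatively, this replaces each cross contribution by the symmetric form $\tfrac12\big(((z_u)_i-(z_v)_i)^2+((z_u)_j-(z_v)_j)^2\big)$, which decouples across coordinates into weighted Laplacians in which transverse motions are charged with weight $\tfrac12$. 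Assembling these per-coordinate bounds with the within-part bounds, and using the rotation constraints in $U^{\perp}$ to tie the diagonal and transverse coordinates together, is what I expect to be the crux of the proof and the step requiring the most care.
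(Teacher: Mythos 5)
Your overall strategy --- pick a configuration adapted to the partition, split the quadratic form into within-part and cross-part edge contributions, and read off the three families of terms --- matches the paper's starting point (the paper places $A_i$ near $c\,x_i$ for a regular simplex $x_1,\dots,x_d$ and lets $c\to\infty$, which is essentially your ``concentrated'' picture, with the same $60\degree$ angles producing the factor $\tfrac12$). But the argument has a genuine gap exactly at the step you flag as the crux, and your sketched resolution does not close it. First, the symmetrised cross contribution $\tfrac12\bigl(((z_u)_i-(z_v)_i)^2+((z_u)_j-(z_v)_j)^2\bigr)$ is not a pointwise lower bound for the actual contribution of a cross edge at any single configuration: if $u\in A_i$ sits at $t_ue_i$ and $v\in A_j$ at $t_ve_j$, the edge contributes $(t_u a-t_v b)^2/(t_u^2+t_v^2)$ with $a=(z_u)_i-(z_v)_i$ and $b=(z_u)_j-(z_v)_j$, a rank-one form that vanishes on a whole line in the $(a,b)$-plane, whereas $\tfrac12(a^2+b^2)$ does not. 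Averaging over the positions $t_v$ produces the symmetric form only in expectation, and the inequality you would then need, $\sup_p\min_{z\perp U}\ge\min_{z\perp U}\mathbb{E}_p$, is not implied by concavity of $\min_{z\perp U}$ in $L$ (concavity gives $\mathbb{E}_p\min\le\min\mathbb{E}_p$, the wrong direction); moreover the rotations in $U$ move with $p$.

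Second, even granting the symmetrised per-edge bound, the assembled estimate does not dominate $\|z\|^2$ on $U^{\perp}$. Your decomposition bounds $z^TL(G,p)z$ from below by $\lambda$ times a sum of \emph{centred} norms of $d+2\binom{d}{2}=d^2$ coordinate functions (the $i$-th coordinate on $A_i$, and the $i$-th and $j$-th coordinates on $A_i\cup A_j$), so the bound degrades along the span of the corresponding $d^2$ centering directions, while orthogonality to $U$ removes only $\binom{d+1}{2}$ of them. Concretely, for $d\ge3$ take $z$ supported on coordinate $k$, equal to $\xi$ on $A_i$ and $\eta$ on $A_j$ with $i,j,k$ distinct and $\sum\xi=-\sum\eta\ne0$: such $z$ can be taken orthogonal to all trivial motions, yet the only terms of your lower bound that see it are the centred norms over $A_i\cup A_k$ and over $A_j\cup A_k$, whose sum is strictly less than $\|z\|^2$. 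The paper sidesteps both problems by working on edge space with the lower stiffness matrix $L^-=R^TR$: after passing to the limit configuration it subtracts a positive semidefinite correction $\tfrac12Q$ (so Weyl's inequality applies), leaving a block-diagonal matrix whose blocks are exactly the one-dimensional lower stiffness matrices of the $G[A_i]$ and $\tfrac12$ times those of the $G(A_i,A_j)$; the eigenvalue bookkeeping then closes exactly because on edge space the index shift $|E|-d|V|$ is uniform across blocks. I would recommend either adopting that route or supplying a genuinely new argument for the assembling step; as written, the plan does not yield the stated bound.
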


\begin{remark}\label{rem:2d}
In the $d=2$ dimensional case, the statement in Theorem \ref{thm:lower_bound_general_d} can be slightly improved (by removing the constant $1/2$) to
\[a_2(G)\geq \min\{a(G[A_1]), a(G[A_2]),a(G(A_1,A_2))\},\] for every non-trivial partition $A_1,A_2$  of $V$.
\end{remark}

In the case $d=2$, we can think of Theorem \ref{thm:lower_bound_general_d} as a quantitative version of (a special case of) a theorem of Crapo \cite[Theorem 7]{crapo1990generic}. For $d\geq 3$, Theorem \ref{thm:lower_bound_general_d} seems to give, in addition to a lower bound on $a_d(G)$, a new sufficient condition for $d$-rigidity, which we believe to be of independent interest (this sufficient condition could also be derived from \cite[Theorem 5.5]{lindemann2022combinatorial}).


To derive Theorem \ref{thm:rigidity_expanders} from Theorem \ref{thm:lower_bound_general_d} we consider a balanced partition of the vertex set, and construct each of the $\binom{d+1}{2}$ subgraphs induced by the partition in separate. In the main case $k=2d+1$, our ``building blocks" are ($1$-dimensional) expander graphs with maximum degree $3$ and a large proportion of vertices of degree $2$. Such graphs are constructed by subdividing edges in  classical constructions of $3$-regular expander graphs. In Theorem \ref{thm:subdivided_spectral_gap} below, we hedge the effect of edge subdivision on the algebraic connectivity of the graph.

For another application of Theorem \ref{thm:lower_bound_general_d}, we derive a slight improvement of the previously known lower bound for $a_d(K_n)$ from \cite[Theorem 1.5]{lew2022d}.

\begin{corollary}\label{cor:kn_lower_bound}
Let $d\geq 3$ and $n\geq d+1$. Then
\[
    a_d(K_n)\geq \frac{1}{2}\left\lfloor\frac{n}{d}\right\rfloor.
\]
\end{corollary}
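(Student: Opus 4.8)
The plan is to apply Theorem~\ref{thm:lower_bound_general_d} to $K_n$ with a carefully chosen balanced partition of the vertex set into $d$ parts. Since $K_n$ is the complete graph, the induced subgraphs $G[A_i]$ and the bipartite subgraphs $G(A_i,A_j)$ are themselves complete and complete-bipartite graphs, respectively, whose algebraic connectivities are known exactly. The strategy is to split the $n$ vertices as evenly as possible so that each part $A_i$ has size either $\lfloor n/d\rfloor$ or $\lceil n/d\rceil$, and then bound each term in the minimum from below by $\tfrac12\lfloor n/d\rfloor$.

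First I would recall the relevant spectral facts: for the complete graph $K_m$ one has $a(K_m)=m$, and for the complete bipartite graph $K_{s,t}$ one has $a(K_{s,t})=\min(s,t)$. With a balanced partition, each part has size at least $m_0:=\lfloor n/d\rfloor$, so the induced terms satisfy $a(G[A_i])=|A_i|\geq m_0$. For the bipartite terms, $G(A_i,A_j)$ is the complete bipartite graph $K_{|A_i|,|A_j|}$, so $a(G(A_i,A_j))=\min(|A_i|,|A_j|)\geq m_0$, and hence $\tfrac12 a(G(A_i,A_j))\geq \tfrac12 m_0$. The minimum over all $\binom{d+1}{2}$ terms is therefore governed by the smaller family — the halved bipartite terms — giving a lower bound of $\tfrac12\lfloor n/d\rfloor$. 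Plugging this into Theorem~\ref{thm:lower_bound_general_d} yields exactly the claimed inequality $a_d(K_n)\geq \tfrac12\lfloor n/d\rfloor$.

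The remaining points to verify are mostly bookkeeping. I would confirm that a balanced non-trivial partition exists whenever $n\geq d+1$, so that every $A_i$ is nonempty (and in fact has at least one vertex, with the $m_0\geq 1$ needed for the bound to be meaningful); the hypothesis $n\geq d+1$ guarantees $\lfloor n/d\rfloor\geq 1$. I would also double-check the two spectral formulas, either by direct computation of the Laplacian spectra (the Laplacian of $K_m$ has eigenvalue $m$ with multiplicity $m-1$, and the Laplacian of $K_{s,t}$ has nonzero eigenvalues $s$, $t$, and $s+t$) or by citing them as standard. Since $d\geq 3$ here, Theorem~\ref{thm:lower_bound_general_d} applies directly rather than the sharper $d=2$ version of Remark~\ref{rem:2d}.

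Honestly, there is no serious obstacle: the corollary is a clean specialization, and the only mild subtlety is ensuring that the $\tfrac12$-weighted bipartite terms — not the unweighted induced terms — are what determine the minimum, which they do because $\tfrac12 m_0\leq m_0$. The entire proof is short, and the main content is already carried by Theorem~\ref{thm:lower_bound_general_d}; the corollary simply records what it says for the complete graph under a balanced split.
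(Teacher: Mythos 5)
Your proof is correct and follows essentially the same route as the paper's: a balanced partition of $[n]$ into $d$ parts of size $\lfloor n/d\rfloor$ or $\lceil n/d\rceil$, the exact values $a(K_m)=m$ and $a(K_{s,t})=\min(s,t)$ for the induced and bipartite pieces, and a direct application of Theorem~\ref{thm:lower_bound_general_d}. No gaps.
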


In addition, we establish the following upper bound on $a_d(G)$, generalizing the case $d=2$ proved by Jord\'an and Tanigawa in \cite[Theorem 4.2]{jordan2022rigidity}.

\begin{theorem}\label{thm:upper_bound}
Let $d\geq 2$, and let $G$ be a graph. Then,
\[
    a_d(G)\leq a(G).
\]
\end{theorem}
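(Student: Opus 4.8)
The plan is to prove $a_d(G) \leq a(G)$ by exhibiting, for \emph{every} placement $p: V \to \Rea^d$ of the vertices, a suitable test vector in $\Rea^{d|V|}$ that lies outside the trivial motions and whose Rayleigh quotient with respect to the stiffness matrix $L(G,p)$ is bounded above in terms of a Rayleigh quotient for the ordinary graph Laplacian $L(G)$. Since $a_d(G) = \sup_p \lambda_{\binom{d+1}{2}+1}(L(G,p))$ and $a(G) = \lambda_2(L(G))$, it suffices to show that for each $p$ we have $\lambda_{\binom{d+1}{2}+1}(L(G,p)) \leq a(G)$. The natural candidate test vectors are of the form $x \otimes w$ where $x \in \Rea^{|V|}$ is a suitable vector (e.g.\ the Fiedler vector, an eigenvector of $L(G)$ for $\lambda_2$) and $w \in \Rea^d$ is a fixed unit direction; these encode a uniform translation of the framework in direction $w$, modulated by the scalar field $x$ on the vertices.

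The key computation I would carry out first is the Rayleigh quotient $(x \otimes w)^T L(G,p) (x \otimes w) = \| R(G,p)^T (x \otimes w)\|^2$. For a single edge $\{u,v\}$, the corresponding entry of $R(G,p)^T(x\otimes w)$ is $\bv_{u,v}^T (x\otimes w) = (x_u - x_v)\, (d_{uv}^T w)$, using the definition $\bv_{u,v} = (1_u - 1_v)\otimes d_{uv}$ and the mixed-product property of the Kronecker product. Hence
\[
(x\otimes w)^T L(G,p)(x\otimes w) = \sum_{\{u,v\}\in E} (x_u - x_v)^2 (d_{uv}^T w)^2 \leq \sum_{\{u,v\}\in E}(x_u-x_v)^2 = x^T L(G) x,
\]
because $\|d_{uv}\| \le 1$ and $\|w\|=1$ give $(d_{uv}^T w)^2 \leq 1$. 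Since $\|x\otimes w\|^2 = \|x\|^2\|w\|^2 = \|x\|^2$, the Rayleigh quotient of $x\otimes w$ for $L(G,p)$ is at most the Rayleigh quotient of $x$ for $L(G)$.

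To convert this Rayleigh-quotient bound into an eigenvalue bound I would use the Courant–Fischer min-max characterization. The subtlety, and the main obstacle, is that $\lambda_{\binom{d+1}{2}+1}(L(G,p))$ is not simply a minimum of a Rayleigh quotient over a one-dimensional space but the min over $\binom{d+1}{2}+1$–dimensional subspaces of the max of the Rayleigh quotient. To control it, I would produce a whole subspace of small-Rayleigh-quotient vectors rather than a single one: take an orthonormal set $x^{(1)}, \dots, x^{(m)}$ of eigenvectors of $L(G)$ spanning the span of the all-ones vector together with eigenvectors for eigenvalues at most $a(G)$ (in the simplest case the constant vector $\mathbf 1$ and a $\lambda_2$-eigenvector), tensor each against a fixed direction $w$, and check these tensor vectors are linearly independent and jointly have Rayleigh quotient at most $a(G)$. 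The care needed is to ensure this test subspace has dimension exceeding $\binom{d+1}{2}$ \emph{and} meets the orthogonal complement of any prescribed $\binom{d+1}{2}$-dimensional subspace nontrivially, so that Courant–Fischer yields $\lambda_{\binom{d+1}{2}+1}(L(G,p)) \leq a(G)$. I expect the delicate point to be choosing the directions $w$ (and possibly several directions) and the scalar fields so that the resulting tensor vectors span a space of the right dimension and interact correctly with the $\binom{d+1}{2}$-dimensional space of trivial infinitesimal motions; handling the case $x = \mathbf 1$, where $x^T L(G)x = 0$ and $x\otimes w$ is itself a trivial (translational) motion, requires bookkeeping so that the counted dimension genuinely exceeds $\binom{d+1}{2}$. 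Once the test subspace is correctly assembled, the eigenvalue bound $a_d(G)\le a(G)$ follows uniformly in $p$ and hence after taking the supremum.
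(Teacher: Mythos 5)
Your opening computation is exactly the paper's starting point: with $y$ a unit Fiedler vector and $x_z=y\otimes z$, one gets $(y\otimes z)^T L(G,p)(y\otimes z)=\sum_{uv\in E}(y_u-y_v)^2\langle z,d_{uv}\rangle^2\le y^TL(G)y=a(G)$. The genuine gap is precisely at the step you yourself flag as ``delicate'' and leave unresolved: converting this Rayleigh-quotient bound into a bound on $\lambda_{\binom{d+1}{2}+1}(L(G,p))$. There are two concrete obstructions. First, the kernel of $L(G,p)$ for an infinitesimally rigid framework consists of the translations $\mathbf{1}\otimes w$ \emph{and} the infinitesimal rotations $u\mapsto Ap(u)$ with $A$ skew-symmetric; the latter are not of tensor-product form, so you cannot assemble a $\left(\binom{d+1}{2}+1\right)$-dimensional test space out of vectors $x\otimes w$ alone. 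Second, and more seriously, if you instead take $S=\ker L(G,p)\oplus\mathrm{span}(y\otimes z)$, Courant--Fischer requires bounding the Rayleigh quotient of \emph{every} vector $k+t(y\otimes z)\in S$; the numerator is $t^2(y\otimes z)^TL(G,p)(y\otimes z)$, but the denominator $\|k+t(y\otimes z)\|^2$ is only controlled if $y\otimes z\perp\ker L(G,p)$. While $y\perp\mathbf{1}$ gives orthogonality to the translations, the inner product of $y\otimes z$ with a rotation equals $\langle z, A\sum_u y_u\, p(u)\rangle$, which is generically nonzero. Equivalently: the projection $q$ of $y\otimes z$ onto $\mathrm{im}\,L(G,p)$ satisfies $q^TL(G,p)q=(y\otimes z)^TL(G,p)(y\otimes z)\le a(G)\|y\otimes z\|^2$, but $\|q\|$ can be much smaller than $\|y\otimes z\|$, so the needed inequality $q^TL(G,p)q\le a(G)\|q\|^2$ does not follow.

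The paper closes this gap with an idea your plan is missing: choose the direction $z$ \emph{at random} on $S^{d-1}$ and normalize by $L(K_n,p)$ rather than by the Euclidean norm. Since $\mathbb{E}_z[x_z^TL(G,p)x_z]=a(G)/d$ and $\mathbb{E}_z[x_z^TL(K_n,p)x_z]=n/d$, there exists $z_0$ with $n\,x_{z_0}^TL(G,p)x_{z_0}\le a(G)\,x_{z_0}^TL(K_n,p)x_{z_0}$ and $x_{z_0}\notin\ker L(K_n,p)$. Because $\ker L(K_n,p)=\ker L(G,p)$ for infinitesimally rigid $(G,p)$ and $\lambda_{\max}(L(K_n,p))=n$, the projection $q$ of $x_{z_0}$ onto the image then satisfies $q^TL(G,p)q\le \frac{a(G)}{n}\,q^TL(K_n,p)q\le a(G)\|q\|^2$, and the variational characterization applies to $q$. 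Without this comparison (or some other device controlling the rotational component of the test vector), your argument does not go through. Two minor omissions: you should reduce to injective $p$ via Lemma~\ref{lemma:a_d_equivalent} so that $\|d_{uv}\|=1$ (your $\|d_{uv}\|\le 1$ is enough for the upper bound but the expectation computation needs equality), and dispose separately of maps $p$ for which $(G,p)$ is not infinitesimally rigid, where the relevant eigenvalue is $0$.
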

Theorem~\ref{thm:upper_bound} was proved recently and independently in~\cite{presenza2022upper}. Our proof is different, using the probabilistic method, and we believe it to be of independent interest.

Finally, we study how small and how large can $a_d(G)$ be provided that $G$ is a minimally $d$-rigid graph. A graph $G$ is called \emph{minimally $d$-rigid} if it is $d$-rigid, but $G\setminus e$ is not $d$-rigid for every edge $e\in E$. For $d=1$, these are exactly spanning trees. This question is related to the aforementioned conjecture that no $2d$-regular $d$-rigidity expanders exist (see Conjecture~\ref{conj:percent-ev}).

Among the minimally $d$-rigid graphs, $a_d(G)$ is maximized by a $d$-analog of the star graph. For every $d\geq 1$ and $n\geq d+1$, let $S_{n,d}$ be the graph consisting of a clique of size $d$, and $n-d$ additional vertices, each adjacent to all of the vertices of the clique, and not adjacent to any other vertex. It is easy to check that $S_{n,d}$ is minimally $d$-rigid.
\begin{theorem}\label{thm:upper_bound_minimally_rigid}
For every $d\geq 1$ and $T\neq K_2, K_3$ a minimally $d$-rigid graph there holds
\[
    a_d(T)\leq 1,
\]
and equality holds if $T=S_{n,d}$.
\end{theorem}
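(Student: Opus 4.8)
The plan is to prove Theorem~\ref{thm:upper_bound_minimally_rigid}, which has two parts: an upper bound $a_d(T)\le 1$ for every minimally $d$-rigid graph $T\neq K_2,K_3$, and the claim that $S_{n,d}$ attains equality.

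\paragraph*{Approach and key steps.}
The natural quantity controlling $a_d(T)$ from above is the presence of low-degree vertices, since a vertex of small degree gives a "weak'' direction that no placement $p$ can reinforce. First I would recall the sparsity structure of minimally $d$-rigid graphs: by Maxwell's count, a $d$-rigid graph on $n\ge d+1$ vertices has at least $dn-\binom{d+1}{2}$ edges, and a \emph{minimally} $d$-rigid graph has \emph{exactly} $dn-\binom{d+1}{2}$ edges. An averaging argument then forces the existence of a vertex of low degree: the average degree is $2d-\binom{d+1}{1}\cdot\text{(correction)}/n < 2d$, so there must be a vertex $v$ with $\deg(v)\le 2d-1$; in fact more careful counting shows a vertex of degree at most... the relevant threshold here is a vertex of degree exactly $d$, which always exists in a minimally $d$-rigid graph (a standard fact, e.g.\ the "$d$-valent vertex'' of the Henneberg-type analysis). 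The second step is to bound $\lambda_{\binom{d+1}{2}+1}(L(T,p))$ from above by exhibiting, for every placement $p$, a good test subspace. Concretely, I would use the variational (min-max) characterization of eigenvalues: if I can produce a subspace $W$ of dimension $\binom{d+1}{2}+1$ on which the Rayleigh quotient $x^T L(T,p)x / \|x\|^2$ is at most $1$, then $\lambda_{\binom{d+1}{2}+1}(L(T,p))\le 1$. The kernel of $L(T,p)$ already has dimension $\binom{d+1}{2}$ (the infinitesimal isometries), so I only need to add \emph{one} extra direction with Rayleigh quotient at most $1$. The natural candidate is a vector supported (or almost supported) on a minimum-degree vertex $v$: take $w = 1_v\otimes z$ for a suitable unit $z\in\Rea^d$, projected orthogonally to the kernel. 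Its Rayleigh quotient involves only the $\deg(v)=d$ edge contributions $\langle \bv_{v,u}, w\rangle^2$, each bounded by $\|d_{vu}\|^2\|z\|^2=1$, giving a bound that I would push down to $1$ after optimizing $z$ over the at most $d$ edge directions at $v$.

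\paragraph*{The equality case.}
For $T=S_{n,d}$, I would compute $a_d(S_{n,d})$ directly and show it equals $1$. The clean strategy is to place the $d$ clique vertices at an affinely independent configuration (e.g.\ the standard simplex vertices) and each of the $n-d$ "leaf'' vertices generically; then $L(S_{n,d},p)$ decomposes, up to the clique block, into blocks indexed by the leaves, and each leaf of degree $d$ contributes an eigenvalue that I can make equal to $1$ by symmetry. I would verify that with the optimal symmetric placement the smallest nonzero stiffness eigenvalue is exactly $1$, matching the upper bound, so equality holds. The most delicate point of the whole argument, and the one I would budget the most care for, is the first step: guaranteeing a degree-$d$ vertex and correctly handling the projection onto the orthogonal complement of the $\binom{d+1}{2}$-dimensional kernel, since the test vector $w$ must be made orthogonal to all infinitesimal isometries without inflating its Rayleigh quotient above $1$. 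I expect this projection bookkeeping, rather than the averaging or the example computation, to be the main obstacle; the exclusion of $K_2$ and $K_3$ signals exactly the small cases where the degree-$d$ vertex coincides with the whole "non-kernel'' structure and the bound would otherwise fail to be meaningful.
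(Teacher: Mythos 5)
Your upper-bound argument has two genuine gaps. First, the structural claim it rests on is false: a minimally $d$-rigid graph need \emph{not} contain a vertex of degree exactly $d$. The triangular prism (two triangles joined by a perfect matching) has $6$ vertices and $9=2\cdot 6-\binom{3}{2}$ edges, is minimally $2$-rigid, and is $3$-regular, so no degree-$2$ vertex exists; in general the minimum degree of a minimally $d$-rigid graph can be anywhere up to $2d-1$. Second, even at a degree-$d$ vertex $v$ the test-vector computation does not close. With $w=1_v\otimes z$ you can choose $z$ orthogonal to $d-1$ of the $d$ edge directions at $v$, giving $w^TL(T,p)w=\langle z,d_{vu}\rangle^2\le 1$; but the relevant quantity is the Rayleigh quotient of the projection $w_1$ of $w$ onto the orthogonal complement of the $\binom{d+1}{2}$-dimensional kernel, which equals $w^TL(T,p)w/\|w_1\|^2$ with $\|w_1\|<\|w\|$ (the component of $1_v\otimes z$ along the translation $\mathbf{1}\otimes z$ is already nonzero). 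Projection therefore strictly \emph{increases} the quotient, and you only get a bound of the form $1/(1-O(1/n))>1$, not $1$ — and this must hold uniformly over all placements $p$, since $a_d$ is a supremum. You correctly flag this as the main obstacle, but it is not a bookkeeping issue; it sinks the approach. The paper avoids both problems by going in the opposite direction: for $|V|>d+1$ the average degree $2d-d(d+1)/|V|$ exceeds $d$, so some vertex $v$ has degree at least $d+1$; deleting it leaves fewer than $d|V\setminus v|-\binom{d+1}{2}$ edges, so $T\setminus v$ is not $d$-rigid and $a_d(T\setminus v)=0$; the Jord\'an--Tanigawa vertex-deletion inequality $a_d(G\setminus v)\ge a_d(G)-1$ then gives $a_d(T)\le 1$. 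The remaining case $T=K_{d+1}$ (where every vertex has degree $d$ and deletion keeps rigidity) is handled by citing $a_d(K_{d+1})=1$ for $d\ge 3$ — a case your sketch does not address at all.

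Your plan for the equality case is closer in spirit to the paper but underspecified at the decisive point: with the leaves placed \emph{generically} the relevant eigenvalue will in general be strictly below $1$, and you would need a limiting or optimization argument. The paper instead uses the degenerate placement $\px$ sending the $d$ clique vertices to the standard basis vectors and \emph{all} $n-d$ remaining vertices to the origin, and computes the full spectrum of the lower stiffness matrix explicitly ($1$ with multiplicity $dn-\binom{d+1}{2}-d$, $n-d/2$ with multiplicity $d-1$, and $n$), which yields $\lambda_{\binom{d+1}{2}+1}=1$ exactly and hence $a_d(S_{n,d})\ge 1$; combined with the upper bound this gives equality.
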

This extends a result of Fiedler (see \cite[4.1]{fiedler1973algebraic}, more explicitly stated by Merris in \cite[Cor. 2]{merris1987characteristic}) corresponding to the case $d=1$. Note that 
for $T=K_2$ (which is a minimally $1$-rigid graph), we have $a_1(K_2)=2$, and for $T=K_3$ (which is a minimally $2$-rigid graph), we have $a_2(K_3)=\frac{3}{2}$ (see \cite[Theorem 4.4]{jordan2022rigidity}).

Considering the other extreme, of minimizers of $a_d$ among all $n$-vertex $d$-rigid graphs,  it was shown by Fiedler in \cite{fiedler1973algebraic} that $a(G)\geq a(P_n)=2(1-\cos(\pi/n))$ for every connected graph $G$, where $P_n$ is the $n$-vertex path 
(see \cite{grone1990laplacian} for an explicit statement). We conjecture that a similar situation holds in higher dimensions: in Subsection~\ref{subsec:GeneralizedPathGraphs} we define \emph{generalized path graphs} $P_{n,d}$, which are certain $n$-vertex minimally $d$-rigid graphs, and provide in Proposition~\ref{prop:generalized_path_graph} bounds on their $d$-dimensional algebraic connectivity implying that 
$a_d(P_{n,d})=\Theta_d(1/n^2)$. We conjecture that these graphs are extremal:
\begin{conjecture}\label{conj:minimal_spectral_gap_obtained_by_paths}
Let $G$ be a $d$-rigid graph on $n$ vertices. Then,
\[
    a_d(G)\geq a_d(P_{n,d}).
\]
\end{conjecture}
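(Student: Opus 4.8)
The plan is to first reduce the conjecture to the class of minimally $d$-rigid graphs, and then to attack that case either structurally or by an explicit-framework comparison with the path. For the reduction, I would establish the monotonicity of $a_d$ under edge addition: if $H$ is obtained from $G$ by adding edges on the same vertex set, then for every map $p\colon V\to\Rea^d$ one has $L(H,p)=L(G,p)+\sum_{e}\bv_e\bv_e^T\succeq L(G,p)$, so Weyl's inequality gives $\lambda_{\binom{d+1}{2}+1}(L(H,p))\ge \lambda_{\binom{d+1}{2}+1}(L(G,p))$; taking the supremum over $p$ yields $a_d(H)\ge a_d(G)$. Since every $d$-rigid graph contains a spanning minimally $d$-rigid subgraph $G'$, we get $a_d(G)\ge a_d(G')$, so it suffices to prove the bound when $G$ is minimally $d$-rigid. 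Equivalently, the conjecture reduces to the statement that $P_{n,d}$ minimizes $a_d$ over all minimally $d$-rigid graphs on $n$ vertices.

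Next I would exploit the fact that $a_d(G)=\sup_p\lambda_{\binom{d+1}{2}+1}(L(G,p))$ is a supremum, so a lower bound on $a_d(G)$ only requires exhibiting one good framework. For a fixed infinitesimally rigid $(G,p)$, writing $x=(x_v)_{v\in V}\in(\Rea^d)^V$ and using $x^TL(G,p)x=\sum_{\{u,v\}\in E}\langle x_u-x_v,d_{uv}\rangle^2$, the relevant eigenvalue equals the minimum of this quadratic form over all unit-norm $x$ orthogonal to the $\binom{d+1}{2}$-dimensional space of trivial infinitesimal motions. Thus the goal becomes: for each minimally $d$-rigid $G$, construct an embedding $p_G$ so that every such normalized displacement is ``detected'' by the edges at least as strongly as in the extremal framework realizing $a_d(P_{n,d})$, whose value is $\Theta_d(1/n^2)$ by Proposition~\ref{prop:generalized_path_graph}.

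For the core comparison I see two routes. The structural route would seek a sequence of rigidity-preserving reduction moves, in the spirit of Henneberg constructions, transforming any minimally $d$-rigid graph toward $P_{n,d}$ while not increasing $a_d$; this mirrors the classical $d=1$ argument in which monotonicity reduces the problem to spanning trees and the path is then shown extremal among trees. The analytic route would mimic the diameter-based lower bounds for the $1$-dimensional spectral gap, e.g.\ $a(G)\ge 4/(n\,\mathrm{diam}(G))$, which already pins down the $\Theta(1/n^2)$ order for $P_n$, by introducing a combinatorial ``rigidity diameter'' $\mathrm{diam}_d(G)$ together with a Poincar\'e-type inequality of the form $a_d(G)\gtrsim 1/(\mathrm{diam}_d(G))^2$, and then showing that $P_{n,d}$ maximizes this quantity among minimally $d$-rigid graphs. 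A promising intermediate target is the purely qualitative bound $a_d(G)=\Omega_d(1/n^2)$ for every $d$-rigid $G$, which by the reduction already captures the order of magnitude of the conjecture.

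The main obstacle is the simultaneous dependence of $a_d(G)$ on the embedding and the absence of a usable combinatorial handle on minimally $d$-rigid graphs for $d\ge3$: unlike $d=1$, where $L(G,p)$ is the embedding-free graph Laplacian and minimally rigid graphs are simply trees, here one must both choose a near-optimal $p_G$ uniformly across an ill-understood graph class (no Henneberg-type characterization is known for $d\ge3$) and control the minimum of the stress quadratic form off the trivial motions. Proving a uniform Poincar\'e inequality with the sharp constant matching $a_d(P_{n,d})$, rather than merely the right order, appears to be the decisive difficulty, and is likely to require a new quantitative substitute for the sign-change structure of the Fiedler vector that drives the $d=1$ proof.
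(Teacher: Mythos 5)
The statement you were asked to prove is Conjecture~\ref{conj:minimal_spectral_gap_obtained_by_paths}, which the paper leaves \emph{open}: the authors prove only Proposition~\ref{prop:generalized_path_graph}, pinning down $a_d(P_{n,d})=\Theta_d(1/n^2)$, and explicitly state the extremality of $P_{n,d}$ as a conjecture. So there is no proof in the paper to compare against, and your proposal does not supply one either. The only step you actually carry out is the reduction to minimally $d$-rigid graphs: monotonicity of $a_d$ under edge addition is indeed correct (for each $p$, $L(H,p)\succeq L(G,p)$, so the eigenvalue inequality follows, and the paper itself invokes this monotonicity at the end of the proof of Theorem~\ref{thm:rigidity_expanders}), and every $d$-rigid graph does contain a spanning minimally $d$-rigid subgraph. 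But this reduction is the easy part; everything after it in your write-up is a description of two possible strategies, not an argument.

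Concretely, the gap is the entire core comparison. The structural route presupposes a Henneberg-type reduction scheme transforming an arbitrary minimally $d$-rigid graph toward $P_{n,d}$ without increasing $a_d$; for $d\ge 3$ no combinatorial characterization of minimally $d$-rigid graphs is known at all, and even for $d=2$, where such inductive constructions exist, you offer no mechanism for controlling $a_d$ along the moves (note that already for $d=1$, where minimal rigidity means being a tree, the extremality of $P_n$ is a genuine theorem of Fiedler, not a formality). The analytic route rests on an undefined ``rigidity diameter'' and an unproven Poincar\'e-type inequality $a_d(G)\gtrsim 1/\mathrm{diam}_d(G)^2$; moreover, even if such an inequality were established with the correct order of magnitude, it would only yield $a_d(G)=\Omega_d(1/n^2)$, which is strictly weaker than the conjectured exact inequality $a_d(G)\ge a_d(P_{n,d})$ --- the conjecture demands the sharp constant, and order-of-magnitude bounds cannot distinguish $P_{n,d}$ from a near-extremal competitor. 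You diagnose this difficulty accurately in your final paragraph, but diagnosing it is not overcoming it; the statement remains open after your proposal exactly as it was before.
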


The paper is organized as follows: In Section \ref{sec:prelims} we present some results about stiffness matrices that are used later. In particular, we recall the definition of the \emph{lower stiffness matrix} $L^{-}(G,p)$ introduced in \cite{lew2022d}. In Section \ref{sec:lower_bound} we prove Theorem \ref{thm:lower_bound_general_d}. In Section \ref{sec:subdivisions} we study the effects of edge subdivisions on the spectral gap of a graph. Section \ref{sec:rigidity_expanders} contains the proof of our main result, Theorem \ref{thm:rigidity_expanders}, showing the existence of $k$-regular $d$-rigidity expanders for $k\geq 2d+1$. In Section \ref{sec:upper_bound} we give a proof of the upper bound $a_d(G)\leq a(G)$ (Theorem \ref{thm:upper_bound}). In Section \ref{sec:minimally_rigid} we study the $d$-dimensional algebraic connectivity of minimally $d$-rigid graphs. We conclude in Section \ref{sec:concluding} with several open problems and directions for further research.

\section{Preliminaries}\label{sec:prelims}

Occasionally, it is simpler to work with the \emph{lower stiffness matrix} of the framework $(G,p)$, defined by
\[
    L^{-}(G,p)=R(G,p)^T R(G,p) \in \Rea^{|E|\times |E|}.
\] 
By standard linear algebra, we have that $\rank(L(G,p))=\rank(L^{-}(G,p))=\rank(R(G,p))$ and that the non-zero eigenvalues  of $L(G,p)$, with multiplicities, coincide with those of $L^{-}(G,p)$. In particular, assuming that $|E|\geq d|V|-\binom{d+1}{2}$, we have
\begin{equation}\label{eqn:lam_k}
    \lambda_{k}(L(G,p))= \lambda_{|E|-d|V|+k}(L^{-}(G,p)),
\end{equation}

for every $k\geq \binom{d+1}{2}+1$. In addition, the entries of $L^{-}(G,p)$ are given explicitly by the following lemma.
\begin{lemma}[{\cite[Lemma 2.1]{lew2022d}}]
\label{lemma:down_laplacian}
Let $(G,p)$ be a $d$-dimensional framework. Then, for every $e,e'\in E(G)$,
\[
    L^{-}(G,p)_{e,e'}= \begin{cases}
                    2 & \text{ if } e=e'=\{u,v\} \text{ and } p(u)\neq p(v),\\
  d_{uv}\cdot d_{uw} & \text{ if } e=\{u,v\},~e'=\{u,w\} \\
                    0 & \text{ otherwise,}
                    \end{cases} 
\]
\end{lemma}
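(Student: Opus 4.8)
The plan is to compute every entry of $L^{-}(G,p)$ directly from its definition as a Gram matrix. Since $L^{-}(G,p)=R(G,p)^T R(G,p)$ and the columns of $R(G,p)$ are precisely the vectors $\bv_{u,v}$ indexed by the edges of $G$, the $(e,e')$ entry is just the inner product $\bv_e^T\bv_{e'}$ of the two corresponding columns. Thus the whole statement reduces to evaluating $\bv_{u,v}^T\bv_{x,y}$ for edges $e=\{u,v\}$ and $e'=\{x,y\}$, according to how their vertex sets overlap.

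The key algebraic tool is the mixed-product identity for the Kronecker product: for $a,c\in\Rea^{|V|}$ and $b,d\in\Rea^{d}$ one has $(a\otimes b)^T(c\otimes d)=(a^Tc)(b^Td)$. Applying this to $\bv_{u,v}=(1_u-1_v)\otimes d_{uv}$ factors each entry as
\[
\bv_{u,v}^T\bv_{x,y}=\big\langle 1_u-1_v,\,1_x-1_y\big\rangle\cdot\big\langle d_{uv},\,d_{xy}\big\rangle ,
\]
so it remains to evaluate the purely combinatorial factor $\langle 1_u-1_v,\,1_x-1_y\rangle$, which depends only on the intersection pattern of $e$ and $e'$.

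A short case analysis then finishes the proof. When $e=e'=\{u,v\}$ the combinatorial factor equals $\langle 1_u-1_v,1_u-1_v\rangle=2$ and the geometric factor is $\|d_{uv}\|^2$, which is $1$ if $p(u)\neq p(v)$ and $0$ otherwise, yielding the value $2$ (and placing the degenerate diagonal in the zero case exactly when $p(u)=p(v)$). When $e$ and $e'$ share exactly one vertex, written as $e=\{u,v\}$ and $e'=\{u,w\}$ with $u,v,w$ distinct, the combinatorial factor collapses to $\langle 1_u-1_v,1_u-1_w\rangle=1$, giving the entry $d_{uv}\cdot d_{uw}$. When $e$ and $e'$ are disjoint, all four endpoints are distinct, the combinatorial factor vanishes, and the entry is $0$.

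The only delicate point --- and the single place where care is needed --- is a bookkeeping one: the vector $\bv_{u,v}$ must be independent of the chosen orientation of the edge, so that the formula is unambiguous. This holds because $d_{vu}=-d_{uv}$ gives $\bv_{v,u}=(1_v-1_u)\otimes d_{vu}=\bv_{u,v}$; consequently, in the shared-vertex case the two admissible sign choices for the common endpoint induce compensating sign flips in the two factors, so that the product $d_{uv}\cdot d_{uw}$ is well defined. No genuine obstacle arises beyond this consistency check.
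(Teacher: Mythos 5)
Your proof is correct. The paper does not prove this lemma at all---it is quoted verbatim from \cite[Lemma 2.1]{lew2022d}---so there is no in-paper argument to compare against; your direct computation of the Gram matrix entries $\bv_e^T\bv_{e'}$ via the mixed-product identity $(a\otimes b)^T(c\otimes d)=(a^Tc)(b^Td)$, together with the case analysis on $|e\cap e'|$ and the orientation check $\bv_{v,u}=\bv_{u,v}$, is exactly the standard verification and handles the degenerate diagonal case ($p(u)=p(v)$) correctly.
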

where $d_{uv}\cdot d_{uw}$ denotes the dot product. In the case that $e=\{u,v\}$ and $e'=\{u,w\}$, we denote by
$\theta(e,e')$ the angle between $d_{uv}$ and $d_{uw}$. Hence,  
    $L^{-}(G,p)_{e,e'}=\cos(\theta(e,e'))$ (by convention, $\cos(\theta(e,e'))=0$ if $d_{uv}=0$ or $d_{uw}=0$).

\section{A lower bound on $a_d(G)$}\label{sec:lower_bound}

We turn to the proof of Theorem \ref{thm:lower_bound_general_d}, starting with the following very simple lemma about the eigenvalues of a block diagonal matrix.

For convenience, given a ``$0\times 0$" matrix $M$, we define $\lambda_1(M)=\infty$. 

\begin{lemma}\label{lemma:block_diagonal}
Let $M\in\Rea^{n\times n}$ be a block diagonal matrix, with blocks $M_1,\ldots,M_k$, where $M_i\in \Rea^{n_i\times n_i}$ is symmetric for every $1\leq i\leq k$. Then, for every $1\leq m\leq n$ and $r_1,\ldots, r_k$ satisfying $m=1-k+\sum_{i=1}^k r_i$ there holds
\[
\lambda_m(M)\geq \min \{ \lambda_{r_i}(M_i) : \, 1\leq i\leq k\}.
\]
\end{lemma}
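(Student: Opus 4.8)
The plan is to exploit the fact that, for a block diagonal symmetric matrix, the multiset of eigenvalues of $M$ is precisely the disjoint union of the multisets of eigenvalues of the blocks $M_1,\ldots,M_k$. In this way the whole statement reduces to a counting argument about how many eigenvalues fall below a prescribed threshold. Concretely, I would set $\mu := \min\{\lambda_{r_i}(M_i): 1\le i\le k\}$, so that the goal becomes showing $\lambda_m(M)\ge \mu$.

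First I would reformulate the conclusion in counting terms. Since $\lambda_m(M)$ is the $m$-th smallest eigenvalue of $M$, the inequality $\lambda_m(M)\ge \mu$ holds if and only if strictly fewer than $m$ eigenvalues of $M$ (counted with multiplicity) are smaller than $\mu$; equivalently, at most $m-1$ of them lie strictly below $\mu$.

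Next I would bound the contribution of each block separately. Fix $i$. By the definition of $\mu$ we have $\lambda_{r_i}(M_i)\ge \mu$, i.e.\ the $r_i$-th smallest eigenvalue of $M_i$ is at least $\mu$; consequently at most $r_i-1$ eigenvalues of $M_i$ can be strictly smaller than $\mu$. Summing over the blocks and invoking the hypothesis $m=1-k+\sum_{i=1}^k r_i$, the number of eigenvalues of $M$ strictly below $\mu$ is at most $\sum_{i=1}^k (r_i-1)=\left(\sum_{i=1}^k r_i\right)-k=m-1$, which is exactly the bound demanded by the reformulation. This gives $\lambda_m(M)\ge \mu$ and completes the argument.

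The reasoning is elementary, so the only real care lies in the bookkeeping rather than in any genuine difficulty. One must track strict versus non-strict inequalities consistently, noting that the threshold $\mu$ is itself attained as some $\lambda_{r_i}(M_i)$; and one must handle degenerate blocks with $n_i=0$, for which the convention $\lambda_1=\infty$ renders the bound ``at most $r_i-1=0$ eigenvalues below $\mu$'' vacuously correct with $r_i=1$. I expect this edge-case bookkeeping to be the main thing to get right.
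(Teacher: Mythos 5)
Your proof is correct and is essentially the paper's argument: both reduce to the observation that the spectrum of $M$ is the disjoint union of the spectra of the blocks and then count eigenvalues against the threshold $\mu=\min_i\lambda_{r_i}(M_i)$. The only cosmetic difference is that you count at most $\sum_i(r_i-1)=m-1$ eigenvalues strictly below $\mu$, while the paper counts at least $\sum_i(n_i-r_i+1)=n-m+1$ eigenvalues at or above it; these are the same bookkeeping.
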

\begin{proof}
Note that the spectrum of $M$ is the union of the spectra of $M_1,\ldots,M_k$. Let $\lambda=\min \{ \lambda_{r_i}(M_i) : \, 1\leq i\leq k\}$. Then, there are at least 
\[
 \sum_{i=1}^k (n_i-r_i+1)= n+k-\sum_{i=1}^k r_i= n-m+1
\]
eigenvalues of $M$ that are greater or equal to $\lambda$. Therefore, $\lambda$ is one of the $m$ smallest eigenvalues of $M$. That is, $\lambda_m(M)\geq \lambda$ as claimed.
\end{proof}
\begin{remark}
    Note that, under the convention $\lambda_1(M_i)=\infty$ for $M_i\in \Rea^{0\times 0}$, Lemma \ref{lemma:block_diagonal} holds 
     also if we allow values $n_i=0$ and $r_i=1$ for one or more $i\in[k]$.
\end{remark}

\begin{proof}[Proof of Theorem \ref{thm:lower_bound_general_d}]
Suppose that $V=[n]$ and denote $m=|E|-d|V|+\binom{d+1}{2}+1$.

If one of the subgraphs $G[A_i]$ for $i\in[d]$ or $G(A_i,A_j)$ for $1\leq i<j\leq d$ is not connected, then the claim holds trivially. Therefore, we will assume that all of these subgraphs are connected. In particular, we have $|E(G[A_i])|\geq |A_i|-1$ for all $i\in[d]$, and $|E(G(A_i,A_j))|\geq |A_i\cup A_j|-1$ for all $1\leq i<j\leq d$.

Let $x_1,\ldots,x_d\in \Rea^{d-1}$ be the vertices of a regular $(d-1)$-dimensional simplex with edge lengths $1$.
For every $c>0$, consider the embedding $p_c: V\to \Rea^d$ that is given by
\[
    p_c(u)=\left(c x_{i},u \right)
\]
for every $i\in[d]$ and $u\in A_i$. 

Let $L^-$ be the entry-wise limit of the matrices $L^{-}(G,p_c)$ as $c\to\infty$. Note that $\lambda_m(L^-(G,p_c))$ converges to $\lambda_{m}(L^{-})$ as $c\to\infty$ by the continuity of eigenvalues. Therefore,
\begin{equation}
\label{eqn:a_d_and_L_minus}
a_d(G) \ge \lim_{c\to\infty}\lambda_{\binom{d+1}{2}+1}(L(G,p_c))
= \lim_{c\to\infty}\lambda_{m}(L^-(G,p_c))
= \lambda_{m}(L^-).
\end{equation}
We now turn to establish a lower bound for $\lambda_{m}(L^-).$

\begin{claim}\label{clm:Lminus}
For every $e,e' \in E$ there holds,
 \begin{align}\label{eq:limit_lap_general_d}
    L^{-}_{e,e'}=\begin{cases}
                    2 & \text{ if }  e=e',\\
                    \sigma(e,e') & \text{ if } |e\cap e'|=1, \, e,e'\in E(G[A_i]) \text{ for some } i,\\
                    1 & \text{ if } |e\cap e'|=1,  \, e,e'\in E(G(A_i,A_j)) \text{ for some } i\neq j,\\
                    1/2 & \text{ if } |e\cap e'|=1,\, e\in E(G(A_i,A_j)),e'\in E(G(A_i,A_k)) \\ & \quad \text{ for some } i\neq j\neq k,\\
                    0 & \text{otherwise},        
    \end{cases}
 \end{align}
where $\sigma(e,e')=\mathrm{sign}((u-v)(u-w))\in\{\pm 1\}$ if $e=\{u,v\}$ and $e'=\{u,w\}$.
\end{claim}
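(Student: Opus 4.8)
The plan is to read off the entries of the limit matrix $L^-$ directly from the explicit description in Lemma~\ref{lemma:down_laplacian}, by first computing the limiting direction vectors $\bar{d}_{uv}:=\lim_{c\to\infty}d_{uv}$ associated to the embeddings $p_c$, and then taking their pairwise dot products (this is legitimate since the dot product is continuous, so the limit of $L^{-}(G,p_c)_{e,e'}$ is the dot product of the limiting directions).

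First I would determine $\bar d_{uv}$ for each edge $\{u,v\}$, splitting into two types according to the partition. If $u,v$ lie in the same part $A_i$, then the first $d-1$ coordinates of $p_c(u)$ and $p_c(v)$ both equal $cx_i$ and cancel, so $p_c(u)-p_c(v)=(0,u-v)$ is independent of $c$ and $\bar d_{uv}=(0,\mathrm{sign}(u-v))$ points purely in the last coordinate. If instead $u\in A_i$ and $v\in A_j$ with $i\neq j$, then $p_c(u)-p_c(v)=(c(x_i-x_j),\,u-v)$; since $\|x_i-x_j\|=1$, dividing by the norm $\sqrt{c^2+(u-v)^2}$ and letting $c\to\infty$ gives $\bar d_{uv}=(x_i-x_j,0)$, which lies purely in the simplex directions.

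With these limits in hand I would go through the cases of \eqref{eq:limit_lap_general_d}. The diagonal entries equal $2$ for every $c$ (as $p_c(u)\neq p_c(v)$ always), and entries indexed by disjoint edges vanish identically, giving the first and last cases. For edges $e=\{u,v\}$ and $e'=\{u,w\}$ sharing a vertex $u\in A_i$, the entry is $\bar d_{uv}\cdot\bar d_{uw}$, which I evaluate according to the parts of $v$ and $w$: if both lie in $A_i$ we get $\mathrm{sign}(u-v)\,\mathrm{sign}(u-w)=\sigma(e,e')$; if both lie in a common $A_j$ with $j\neq i$ we get $\|x_i-x_j\|^2=1$; if one edge is internal to $A_i$ and the other crosses to some $A_j$, the two limiting directions are orthogonal (one in $\{0\}\times\Rea$, the other in $\Rea^{d-1}\times\{0\}$) and the entry is $0$; and if $v\in A_j$, $w\in A_k$ with $i,j,k$ distinct, we get $(x_i-x_j)\cdot(x_i-x_k)$. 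This last quantity I compute from the geometry of the unit regular simplex via polarization,
\[
(x_i-x_j)\cdot(x_i-x_k)=\tfrac12\big(\|x_i-x_j\|^2+\|x_i-x_k\|^2-\|x_j-x_k\|^2\big)=\tfrac12,
\]
matching the remaining case.

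The computation is essentially routine, and the only point needing genuine care is the limit for crossing edges: the last coordinate $u-v$ is an $O(1)$ quantity competing against a norm that grows linearly in $c$, so it washes out and $\bar d_{uv}$ ends up lying entirely in the simplex directions. This decoupling of the \emph{internal} directions (in the last coordinate) from the \emph{crossing} directions (in the simplex coordinates) is exactly what produces the clean block structure of $L^-$, and hence the case distinction in \eqref{eq:limit_lap_general_d}.
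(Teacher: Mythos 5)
Your proposal is correct and follows essentially the same route as the paper: compute the limiting direction vectors (internal edges pointing along the last coordinate, crossing edges along the simplex directions) and evaluate the four dot-product cases via Lemma~\ref{lemma:down_laplacian}. The only cosmetic differences are that you spell out the $c\to\infty$ limit for crossing edges explicitly and compute $(x_i-x_j)\cdot(x_i-x_k)=\tfrac12$ by polarization rather than by citing the $\pi/3$ angle in a regular simplex.
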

\begin{proof}
The cases $e=e'$ and  $e\cap e'=\emptyset$ follow directly from Lemma \ref{lemma:down_laplacian}. For the other cases, we observe that if $u\in A_i,~v\in A_j$ for some $i\neq j$, then $d_{uv}$ converges to $(x_i-x_j,0)\in\Rea^d$ as $c\to\infty$. Otherwise, if $u,v \in A_i$ then $d_{uv}=\mathrm{sign}(u-v)\cdot 1_d$ for every $c>0$, where $1_d$ is the $d$-th unit vector in $\Rea^d$. We denote $e=\{u,v\}$ and $e'=\{u,w\}$ and derive the claim by a case analysis using the second case in Lemma \ref{lemma:down_laplacian}:
\begin{itemize}
    \item If $u,v,w \in A_i$ then $d_{uv}\cdot d_{uw}=\mathrm{sign}(u-v)\mathrm{sign}(u-w)=\sigma(e,e').$
    \item If $u\in A_i,~ v,w\in A_j$ then $d_{uv}\cdot d_{uw}\to\|x_i-x_j\|^2=1.$
    \item If $u\in A_i,~ v\in A_j,~w\in A_k$ then $d_{uv}\cdot d_{uw}\to(x_i-x_j)\cdot(x_i-x_k)=1/2$ since the angle between two edges in a regular simplex is $\pi/3$.
    \item If $u,v\in A_i,~w\in A_j$ then $d_{uv}\cdot d_{uw}\to \mathrm{sign}(u-v)1_d\cdot(x_i-x_j,0)=0$.
\end{itemize}
\end{proof}

Let $E'=\cup_{1\leq i<j\leq d} E(G(A_i,A_j))$, and $Q$ an $E\times E$ matrix defined by
\[
Q_{e,e'}=\begin{cases}
 |e\cap e'| & \text{ if } e,e'\in E',\\
 0 & \text{ otherwise.}
\end{cases}
\]
Note that $Q$ is positive semi-definite since $Q=N^TN$, where $N\in \Rea^{V\times E}$ is a (signless) incidence matrix, defined by
\[
    N_{v,e}=\begin{cases}
     1 & \text{ if } v\in e\in E',\\
     0 & \text{ otherwise}.
    \end{cases}
\]
Therefore, by Weyl's inequality (see e.g. \cite[Theorem 2.8.1(iii)]{brouwer2011spectra}), it holds that
\begin{equation}\label{eqn:L2M}
\lambda_{m}(L^{-})\ge \lambda_{m}(M),
\end{equation}
where the matrix $M=L^- -\frac 12Q$ is given directly by
 \begin{align*}
    M_{e,e'}=\begin{cases}
                    2 & \text{ if } 
					e=e'\in E(G[A_i]) \text{ for some } i,\\                    
                    \sigma(e,e') & \text{ if } |e\cap e'|=1, \, e,e'\in E(G[A_i]) \text{ for some } i,\\
                                        1 & \text{ if }  e=e'\in E(G(A_i,A_j)) \text{ for some } i\neq j,\\
                    1/2 & \text{ if } |e\cap e'|=1,  \, e,e'\in E(G(A_i,A_j)) \text{ for some } i\neq j,\\
                    0 & \text{otherwise}.    
    \end{cases}
 \end{align*}
We observe that $M$ is a block diagonal matrix, and denote its blocks by $M_i,~i\in [d]$ and $M_{ij},~1\le i<j\le d$. The block $M_i$ is supported on $E(G[A_i])$ and is equal to the ($1$-dimensional) lower stiffness matrix of $G[A_i]$ with respect to any order-preserving embedding $q_i:A_i \to \Rea$. In particular, by letting $r_i=|E(G[A_i])|-|A_i|+2$ we derive from \eqref{eqn:lam_k} that
\begin{equation}\label{eqn:aGAi}
\lambda_{r_i}(M_i) = \lambda_2(L(G[A_i],q_i)) = a(G[A_i]).
\end{equation}

In addition, the block $M_{ij}$ is supported on $E(G(A_i,A_j))$ and is equal to $1/2$ times the ($1$-dimensional) lower stiffness matrix of $G(A_i,A_j)$ with respect to any embedding $q_{ij}:A_i \cup A_j \to \Rea$ satisfying $q_{ij}(u)<q_{ij}(v)$ if $u\in A_i,~v\in A_j$. In particular, by letting $r_{ij}=|E(G(A_i,A_j))|-|A_i\cup A_j|+2$ we find from \eqref{eqn:lam_k} that
\begin{equation}\label{eqn:aGAij}
\lambda_{r_{ij}}(M_{ij}) = \lambda_2\left(\frac 12 L(G(A_i,A_j),q_{ij})\right) = \frac 12 a(G(A_i,A_j)).
\end{equation}

A direct computation yields that
\[
1-\binom{d+1}{2}+\sum_{i=1}^{d}r_i+\sum_{1\le i<j\le d}r_{i,j} = |E| - d|V| + \binom{d+1}{2} + 1=m.
\]
Hence, by Lemma \ref{lemma:block_diagonal}, \eqref{eqn:aGAi} and \eqref{eqn:aGAij}, we find that
\begin{equation}\label{eqn:M}
\lambda_{m}(M) \ge  
\min\left(\bigg\{a(G[A_i]) \bigg\}_{1\leq i\leq d} \bigcup\left\{\frac{1}{2}a(G(A_i,A_j))\right\}_{1\leq i<j\leq d}\right).
\end{equation}
The desired lower bound for $a_d(G)$ is derived by combining \eqref{eqn:a_d_and_L_minus},\eqref{eqn:L2M} and \eqref{eqn:M}.

In particular, since $G[A_i]$ is connected for all $1\leq i\leq d$, and $G(A_i,A_j)$ is connected for all $1\leq i<j\leq d$, then $a_d(G)>0$, and therefore $G$ is $d$-rigid.
\end{proof}

To derive the stronger bound in the case $d=2$ mentioned in Remark \ref{rem:2d}, we note that in this case $L^-$ itself is a block diagonal matrix with $3$ blocks which are the $1$-dimensional lower stiffness matrices $L^-(G[A_1],q_1)$, $L^-(G[A_2],q_2)$ and $L^-(G(A_1,A_2),q_{12})$ (that is, there is no need for the ``correction" term $Q$). Therefore, by the same reasoning we applied to $M$ in the general case, we find that
\[
a_2(G)\ge \lambda_{m}(L^-) \ge \min(\{a_2(G[A_1]),a_2(G[A_2]),a_2(G(A_1,A_2))\}),
\]
where $m=|E|-2|V|+\binom{2+1}{2}+1$.

\begin{remark}
    The criterion for $d$-rigidity given by Theorem \ref{thm:lower_bound_general_d} is minimal in terms of the edge count. Namely, the assumption that all the $\binom {d+1}2$ graphs in the partition are connected implies that there are at least $|A_i|-1$ edges in $G[A_i]$ for $i\in [d]$, and at least $|A_i|+|A_j|-1$ edges in $G(A_i,A_j)$, for  $1\le i<j\le d.$ In total, there needs be at least
    \[
    \sum_{i=1}^{d}(|A_i|-1) + \sum_{1\le i<j\le d}(|A_i|+|A_j|-1) = d|V|-\binom{d+1}{2}
    \]
    edges in $G$ --- precisely the number of edges in a minimally $d$-rigid graph.
\end{remark}

As a consequence of Theorem \ref{thm:lower_bound_general_d}, we obtain a simple proof of Corollary \ref{cor:kn_lower_bound}, giving a lower bound on the $d$-dimensional algebraic connectivity of $K_n$.

\begin{proof}[Proof of Corollary \ref{cor:kn_lower_bound}]
Partition the vertex set $[n]$ into $d$ sets $A_1,\ldots, A_d$ of sizes as equal as possible. Then, each $A_i$ consist of either $\left\lfloor\frac{n}{d}\right\rfloor$ or $\left\lceil\frac{n}{d}\right\rceil$ vertices. 
The induced subgraphs $G[A_i]$ are just complete graphs on $|A_i|$ vertices, and therefore \[a(G[A_i])\geq |A_i| \geq \lfrac{n}{d}\] for all $i\in[d]$. Similarly, the graphs $G(A_i,A_j)$ are complete bipartite graphs, and therefore \[a(G(A_i,A_j))=\min\{|A_i|,|A_j|\}\geq \lfrac{n}{d}.\]
By Theorem \ref{thm:lower_bound_general_d}, we obtain
\[
    a_d(K_n)\geq \frac{1}{2}\lfrac{n}{d}.
\]
\end{proof}

\begin{remark}
For $d=2$, it was shown by Jord\'an and Tanigawa in \cite{jordan2022rigidity}, relying on a result by Zhu (\cite{zhu2013quantitative}), that $a_2(K_n)=n/2$. Dividing the vertex set into two parts of sizes $\lfloor n/2 \rfloor$ and $\lceil n/2 \rceil$ respectively, we obtain a bound of $a_2(K_n)\geq \lfrac{n}{2}$. This gives a simple proof of the sharp lower bound in the case that $n$ is even.
\end{remark}

We conjecture that the lower bound we obtained in Corollary \ref{cor:kn_lower_bound} is almost tight:

\begin{conjecture}\label{conj:complete}
Let $d\geq 3$ and $n\geq d+1$. Then,
\[
a_d(K_n)=\begin{cases}
1 & \text{ if }  d+1\leq n\leq 2d,\\
\frac{n}{2d} & \text{ if } n\geq 2d.
\end{cases}
\]
\end{conjecture}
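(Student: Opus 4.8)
The statement is an exact equality, so the plan is to prove matching lower and upper bounds, separating cases according to whether $d\mid n$ and whether $n$ lies in the small range $d+1\le n\le 2d$. It is worth noting at the outset that the entire difficulty is a factor of two: the trivial averaging bound
\[
\lambda_{\binom{d+1}{2}+1}(L(K_n,p))\le \frac{\operatorname{trace}L(K_n,p)}{dn-\binom{d+1}{2}}=\frac{n(n-1)}{dn-\binom{d+1}{2}}\sim \frac{n}{d},
\]
valid for every $p$ (using $\operatorname{trace}L=2|E|=n(n-1)$ and that at most $dn-\binom{d+1}{2}$ eigenvalues are nonzero), already gives the correct order of magnitude, so that $a_d(K_n)\le \tfrac{n(n-1)}{dn-\binom{d+1}{2}}$. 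The content of the conjecture is improving the constant from $\tfrac1d$ to $\tfrac1{2d}$.

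The lower bound is the easier half. When $d\mid n$ and $n\ge 2d$, Corollary \ref{cor:kn_lower_bound} already gives $a_d(K_n)\ge \frac12\lfrac nd=\frac{n}{2d}$, so the simplex-partition construction is exactly tight. For $n$ not divisible by $d$ the same construction yields only $\frac12\lfrac nd$, which is strictly below $\frac{n}{2d}$: the bottleneck is a bipartite block $M_{ij}$, whose eigenvalue equals $\frac12\min(|A_i|,|A_j|)=\frac12\lfrac nd$ whenever the two parts differ in size. To reach $\frac{n}{2d}$ exactly I would abandon the $c\to\infty$ limiting configuration in favor of an optimized one --- either keeping $c$ finite and solving the resulting finite-dimensional optimization, or placing the $n$ points in a single highly symmetric orbit (for instance on a sphere, with the automorphism group acting transitively) and diagonalizing $L(K_n,p)$ by representation theory. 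For the small range, the base case $n=d+1$ is already settled, since $K_{d+1}=S_{d+1,d}$ and Theorem \ref{thm:upper_bound_minimally_rigid} gives $a_d(K_{d+1})=1$; I would then establish a monotonicity lemma asserting $a_d(K_n)\le a_d(K_{n+1})$ (adding a universal vertex does not decrease $a_d$), which together with the two endpoint values $a_d(K_{d+1})=1$ and $a_d(K_{2d})=1$ would sandwich the whole range $d+1\le n\le 2d$ at $1$.

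The upper bound is where the real work lies, since Theorem \ref{thm:upper_bound} is useless here ($a(K_n)=n$) and the trace bound above is off by exactly the conjectured factor of two. To prove it one must exhibit, for \emph{every} configuration $p$, a subspace $W\subseteq\Rea^{dn}$ of dimension $\binom{d+1}{2}+1$ on which the Rayleigh quotient of $L(K_n,p)$ is at most $\frac{n}{2d}$; by the variational characterization of eigenvalues this bounds $\lambda_{\binom{d+1}{2}+1}$. The natural choice is the $\binom{d+1}{2}$-dimensional space of trivial infinitesimal motions together with one extra low-energy test direction $z$, in which case the relevant quantity reduces to $z^TLz/\|z_\perp\|^2$, where $z_\perp$ is the component of $z$ orthogonal to the trivial motions. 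Product test vectors $z=f\otimes w$ only reproduce the weaker bound $\frac nd$ on average over $w$, so the factor-two gain must come from test directions that genuinely exploit the $\binom d2$ rotational motions lying in the kernel --- a promising candidate being \emph{modulated rotations} $z_u=g(u)\,S\,p(u)$ with $S$ skew-symmetric and $g$ slowly varying, mirroring the fact that in the extremal construction it is precisely the cross (bipartite) interactions, weakened by the simplex angle $\cos(\pi/3)=\frac12$, that form the bottleneck.

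I expect the main obstacle to be establishing this upper bound with the \emph{exact} constant $\frac{n}{2d}$ uniformly over all $p$: one must show that the bipartite-type bottleneck witnessed by the lower-bound construction is unavoidable for every configuration, and not merely for the symmetric ones. The most plausible concrete route is to adapt the probabilistic method used for Theorem \ref{thm:upper_bound}: draw the test direction (or a random family of them) from a distribution adapted to $p$, and bound the expected energy by $\frac{n}{2d}$ times the expected squared norm of the orthogonalized direction. A secondary, more technical obstacle is removing the rounding in the lower bound to obtain $\frac{n}{2d}$ exactly for $n$ not divisible by $d$, which as noted seems to require replacing the clean simplex-partition configuration by an optimized or fully symmetric one.
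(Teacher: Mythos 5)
The statement you are addressing is stated in the paper as a conjecture (Conjecture \ref{conj:complete}), not a theorem: the paper offers no proof, only the lower bound $a_d(K_n)\ge \tfrac12\lfloor n/d\rfloor$ of Corollary \ref{cor:kn_lower_bound} together with the remark that this bound is conjectured to be essentially tight. Your proposal correctly maps out what is known and what is missing, but it does not close any of the gaps, so it is not a proof. The decisive missing piece is the upper bound $a_d(K_n)\le \tfrac{n}{2d}$, which is the entire content of the conjecture beyond Corollary \ref{cor:kn_lower_bound}. Your candidate test directions (modulated rotations $z_u=g(u)\,S\,p(u)$, or a $p$-adapted random family in the style of the proof of Theorem \ref{thm:upper_bound}) are reasonable things to try, but you give no computation showing that any of them achieves Rayleigh quotient at most $\tfrac{n}{2d}$ for an arbitrary configuration $p$, and it is precisely this uniformity over all $p$ that makes the problem hard; nothing in the paper supplies such an argument.

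Two further steps of your plan have concrete defects. First, for the range $d+1\le n\le 2d$ you propose to sandwich $a_d(K_n)$ between $a_d(K_{d+1})=1$ and $a_d(K_{2d})=1$ via a monotonicity lemma $a_d(K_n)\le a_d(K_{n+1})$; but the endpoint value $a_d(K_{2d})=1$ (specifically the upper bound $a_d(K_{2d})\le 1$) is itself part of the conjecture and is established nowhere --- Theorem \ref{thm:upper_bound_minimally_rigid} applies only to minimally $d$-rigid graphs, and $K_n$ is not minimally $d$-rigid once $n>d+1$ --- so the argument is circular. The monotonicity lemma itself is also unproved (Lemma \ref{lemma:vertex_removal} gives $a_d(G)\le a_d(G\setminus v)+1$, which is the wrong direction for your purposes). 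Second, for $n\ge 2d$ with $d\nmid n$ the simplex-partition construction only yields $\tfrac12\lfloor n/d\rfloor<\tfrac{n}{2d}$, and your suggestion to replace it by an optimized or fully symmetric configuration is not carried out. In short, what you have is a sensible research plan for an open problem; the honest comparison with the paper is that the paper proves only the lower bound of Corollary \ref{cor:kn_lower_bound} and explicitly leaves the rest open.
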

Note that this is a strong version of Conjecture 6.1 in \cite{lew2022d}.

\section{Expansion under edge subdivisions}\label{sec:subdivisions}
The goal of this section is to prove the following theorem regarding the effect of edge subdivision on the algebraic connectivity of a graph. Let $G=(V,E)$ be a graph without isolated vertices. Given an edge $e$ in  $G$, replacing $e$ with an induced path containing $m\ge 0$ new internal vertices is called a subdivision of $e$ with $m$ vertices. 

\begin{theorem}\label{thm:subdivided_spectral_gap}
Let $G$ be a connected graph with minimum degree at least $2$ and maximum degree $\Delta$, and let $G'$ be obtained from $G$ by a subdivision of each edge of $G$ with at most $m$ vertices. Then,
\[
a(G')\geq \frac{\min\left\{\frac{1}{\Delta}a(G),4\right\}}{2 (m+1)^2}.
\]
\end{theorem}

 Let $D(G)$ be the diagonal matrix with $D(G)_{i,i}=\deg_G(i)$, and $\mathcal{L}(G)=D(G)^{-\frac{1}{2}} L(G) D(G)^{-\frac{1}{2}}$ be the normalized Laplacian of $G$.  
 The effect of edge subdivision on the normalized Laplacian was studied by Xie, Zhang and Comellas in ~\cite{xie2016normalized}. Denote by $s(G)$ the subdivision of $G$, that is, the graph obtained from $G$ subdividing each edge of $G$ with $1$ vertex, thus subdividing each edge into two edges. Furthermore, let $s^k(G)$ be the $k$-th iterated subdivision. That is, $s^k(G)=s(s^{k-1}(G))$ (where  $s^{0}(G)=G$).

\begin{lemma}[{\cite[Lemma 3.1]{xie2016normalized}}]\label{lemma:normalized_subdivision}
If $\lambda\neq 1$ is an eigenvalue of $\mathcal{L}(s(G))$ then $2\lambda(2-\lambda)$ is an eigenvalue of $\mathcal{L}(G)$.
\end{lemma}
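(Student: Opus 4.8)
The plan is to translate the statement into the language of the (generalized) adjacency eigenvalue problem, where the subdivision structure becomes transparent. If $\lambda$ is an eigenvalue of $\mathcal{L}(s(G))$ with eigenvector $f$, set $\mu=1-\lambda$ and $g=D(s(G))^{-1/2}f$; then $g$ solves the generalized eigenvalue equation $A(s(G))\,g=\mu\,D(s(G))\,g$, that is, $\sum_{y\sim x}g(y)=\mu\deg(x)g(x)$ at every vertex $x$. The point of this reformulation is that the hypothesis $\lambda\neq 1$ becomes $\mu\neq 0$, which is exactly what will be needed to eliminate the subdivision vertices.

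Next I would exploit the structure of $s(G)$, whose vertex set splits into the original vertices $V$ (each retaining its $G$-degree) and one degree-$2$ vertex $w_e$ per edge $e\in E$. At a subdivision vertex $w_e$ with $e=\{u,v\}$ the equation reads $g(u)+g(v)=2\mu\,g(w_e)$, so for $\mu\neq 0$ we may solve $g(w_e)=\frac{g(u)+g(v)}{2\mu}$. Substituting this into the equation at an original vertex $v$, namely $\sum_{e\ni v}g(w_e)=\mu\deg_G(v)g(v)$, and separating the two terms of each $g(u)+g(v)$, yields
\[
\sum_{u\sim v}g(u)=(2\mu^2-1)\,\deg_G(v)\,g(v)
\]
for every $v\in V$. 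In other words, the restriction $g_V:=g|_V$ satisfies $A(G)\,g_V=(2\mu^2-1)\,D(G)\,g_V$, so $2\mu^2-1$ is a generalized eigenvalue of $G$, equivalently an eigenvalue of $\mathcal{A}(G)=D(G)^{-1/2}A(G)D(G)^{-1/2}$.

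Finally I would translate back: the corresponding eigenvalue of $\mathcal{L}(G)=I-\mathcal{A}(G)$ is $1-(2\mu^2-1)=2-2\mu^2$, and substituting $\mu=1-\lambda$ gives $2-2(1-\lambda)^2=2\lambda(2-\lambda)$, as required. The one genuine subtlety — the main obstacle — is verifying that $g_V$ is not identically zero, so that it is a legitimate eigenvector and $2\mu^2-1$ is a true eigenvalue rather than a spurious root. This is where $\mu\neq 0$ is used a second time: if $g_V\equiv 0$ then the relation $g(w_e)=\frac{g(u)+g(v)}{2\mu}$ forces every $g(w_e)=0$ as well, so $g\equiv 0$, contradicting that $g$ (hence $f$) is a nonzero eigenvector. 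Everything else is routine bookkeeping of degrees, using that $\deg_{s(G)}(v)=\deg_G(v)$ for $v\in V$ and $\deg_{s(G)}(w_e)=2$.
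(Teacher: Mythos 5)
Your proof is correct: the reduction to the generalized eigenvalue equation $A(s(G))g=\mu D(s(G))g$, the elimination of the degree-$2$ subdivision vertices via $g(w_e)=\frac{g(u)+g(v)}{2\mu}$ (which is exactly where $\lambda\neq 1$ is used), and the check that $g|_V\not\equiv 0$ are all sound, and the algebra $2-2(1-\lambda)^2=2\lambda(2-\lambda)$ checks out. Note that the paper itself gives no proof of this lemma --- it is quoted verbatim from the cited reference of Xie, Zhang and Comellas --- and your argument is essentially the standard one used there, so you have in effect supplied a correct self-contained proof of an imported fact.
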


In order to relate the spectral gap of the normalized Laplacian to the one of the unnormalized Laplacian, we will use the following result due to Higham and Cheng ~\cite{higham1998modifying}.

\begin{lemma}[{\cite[Theorem 3.2]{higham1998modifying}}]\label{lemma:inertia}
Let $A\in \Rea^{n\times n}$ be a symmetric matrix, and let $X\in \Rea^{n\times m}$, for some $m\leq n$. Then, for every $1\leq i\leq m$,
\[
    \lambda_{i}(X^{T} A X) = \theta_i \mu_i,
\]
where
\[
\lambda_i(A)\leq \mu_i\leq \lambda_{i+n-m}(A)
\]
and
\[
\lambda_1(X^T X) \leq \theta_i\leq \lambda_m(X^T X).
\]
\end{lemma}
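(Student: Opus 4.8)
The plan is to factor the congruence $X^{T}AX$ into two independent operations—an orthogonal restriction of $A$ to an $m$-dimensional subspace, which produces the interlacing bounds on $\mu_i$, and a positive-definite scaling by $X^{T}X$, which produces the multiplicative factor $\theta_i$—and to control each separately. First I would reduce to the case where $X$ has full column rank $m$, so that $B:=X^{T}X$ is positive definite; the general (rank-deficient) case then follows by continuity, perturbing $X$ so that $X^{T}X$ becomes nonsingular and letting the perturbation tend to $0$, using that all three families of eigenvalues depend continuously on $X$, that the interval $[\lambda_1(X^{T}X),\lambda_m(X^{T}X)]$ closes up in the limit, and that a convergent subsequence of the $(\theta_i,\mu_i)$ can be extracted by compactness.

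Assuming $B\succ 0$, let $B^{1/2}$ be its positive-definite square root and set $Y:=XB^{-1/2}$. Then $Y^{T}Y=B^{-1/2}X^{T}XB^{-1/2}=I_m$, so $Y$ has orthonormal columns, and $X=YB^{1/2}$ yields the factorization
\[
X^{T}AX=B^{1/2}\,(Y^{T}AY)\,B^{1/2}.
\]
Writing $\mu_i:=\lambda_i(Y^{T}AY)$, the matrix $Y^{T}AY$ is the compression of $A$ to the column space of $X$ expressed in an orthonormal basis, so the Poincar\'e separation theorem (Cauchy interlacing for rectangular isometries) gives exactly $\lambda_i(A)\le\mu_i\le\lambda_{i+n-m}(A)$ for $1\le i\le m$. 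This settles the bounds on $\mu_i$.

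It then remains to compare $\lambda_i(X^{T}AX)=\lambda_i(B^{1/2}CB^{1/2})$ with $\mu_i=\lambda_i(C)$, where $C:=Y^{T}AY$, that is, to bound the ratio $\theta_i:=\lambda_i(X^{T}AX)/\mu_i$. This is precisely Ostrowski's theorem on congruence transformations, which guarantees $\theta_i\in[\lambda_{\min}(B),\lambda_{\max}(B)]=[\lambda_1(X^{T}X),\lambda_m(X^{T}X)]$, completing the proof. For a self-contained argument I would derive this multiplicative bound from the Courant--Fischer min--max formula: for any $z$, setting $u=B^{1/2}z$, the Rayleigh quotient splits as
\[
\frac{z^{T}B^{1/2}CB^{1/2}z}{z^{T}z}=\frac{u^{T}Cu}{u^{T}u}\cdot\frac{z^{T}Bz}{z^{T}z},
\]
where the second factor always lies in $[\lambda_{\min}(B),\lambda_{\max}(B)]$; feeding this into the min--max characterization of $\lambda_i$ over $i$-dimensional subspaces then sandwiches $\lambda_i(B^{1/2}CB^{1/2})$ between $\lambda_{\min}(B)\,\mu_i$ and $\lambda_{\max}(B)\,\mu_i$.

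I expect the main obstacle to be this last multiplicative step: unlike the additive Weyl and interlacing inequalities, the min--max argument must track the sign of $\mu_i$, since multiplication by $\theta_i\ge 0$ swaps the roles of the lower and upper endpoints when $\mu_i<0$. The clean way to handle this is to invoke Sylvester's law of inertia—congruence preserves the numbers of positive, negative, and zero eigenvalues—so that $\lambda_i(X^{T}AX)$ and $\mu_i$ share the same sign, $\theta_i\ge 0$ is well defined (and may be taken arbitrarily in range when $\mu_i=0$), and the positive and negative parts of the spectrum can be treated separately.
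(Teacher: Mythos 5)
Your proof is correct, but note that the paper itself contains no proof of this lemma to compare against: it is imported verbatim as \cite[Theorem 3.2]{higham1998modifying} and used as a black box. What you have written is essentially a reconstruction of the standard argument behind the cited result. The factorization $X=YB^{1/2}$ with $B=X^TX\succ 0$ and $Y^TY=I_m$ correctly splits the congruence into an isometric compression $C=Y^TAY$, to which Poincar\'e separation applies and yields exactly $\lambda_i(A)\le\mu_i\le\lambda_{i+n-m}(A)$, and a congruence by the positive definite $B^{1/2}$, to which Ostrowski's theorem applies and yields $\theta_i\in[\lambda_1(B),\lambda_m(B)]$; the reduction to full column rank via perturbation and compactness of the limiting intervals is also needed (otherwise $B^{-1/2}$ does not exist) and is handled soundly, since in the rank-deficient limit $\lambda_1(X^TX)=0$ and the product form $\theta_i\mu_i$ degenerates gracefully. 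Your final paragraph identifies the one genuinely delicate point: a naive min--max bound of the form $\lambda_i(B^{1/2}CB^{1/2})\le\lambda_{\max}(B)\,\mu_i$ is simply false when $\mu_i<0$, and invoking Sylvester's law of inertia to match the signs of $\lambda_i(X^TAX)$ and $\mu_i$, then treating the positive and negative parts of the spectrum separately (with the endpoint roles swapped on the negative part), is precisely how Ostrowski's theorem is proved, so your self-contained fallback is consistent with the theorem you quote. In short: the paper buys brevity by citing Higham--Cheng; your route buys self-containedness at the cost of needing two classical ingredients (Poincar\'e separation and Ostrowski) plus the inertia bookkeeping, and both are valid.
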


\begin{lemma}\label{lemma:normalized_vs_unnormalized}
Let $G$ be a graph on $n$ vertices, with minimum degree $\delta>0$ and maximum degree $\Delta$. Then, for all $2\leq i\leq n$,
\[
  \delta\leq  \frac{\lambda_i(L(G))}{\lambda_i(\mathcal{L(G)})}\leq \Delta.
\]
\end{lemma}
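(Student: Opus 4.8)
The plan is to prove Lemma \ref{lemma:normalized_vs_unnormalized} by writing the unnormalized Laplacian as a congruence transformation of the normalized one and applying Lemma \ref{lemma:inertia} (the Higham--Cheng result). Observe that $\mathcal{L}(G) = D(G)^{-1/2} L(G) D(G)^{-1/2}$, so equivalently $L(G) = D(G)^{1/2}\,\mathcal{L}(G)\,D(G)^{1/2}$. To cast this in the form $X^T A X$ covered by Lemma \ref{lemma:inertia}, I would take $A = \mathcal{L}(G)$ and $X = D(G)^{1/2}$, which is a symmetric (hence $X^T = X$) $n\times n$ matrix; here $m = n$, so the inequalities $\lambda_i(A)\le \mu_i \le \lambda_{i+n-m}(A) = \lambda_i(A)$ collapse to $\mu_i = \lambda_i(\mathcal{L}(G))$.

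With $m=n$ the lemma gives $\lambda_i(L(G)) = \theta_i\,\lambda_i(\mathcal{L}(G))$, where $\theta_i$ is pinched between $\lambda_1(X^TX)$ and $\lambda_n(X^TX)$. Since $X^T X = D(G)$ is diagonal with entries $\deg_G(v)$, its smallest eigenvalue is the minimum degree $\delta$ and its largest is the maximum degree $\Delta$. Hence $\delta \le \theta_i \le \Delta$ for every $i$, which yields
\[
\delta \le \frac{\lambda_i(L(G))}{\lambda_i(\mathcal{L}(G))} \le \Delta.
\]
The restriction $i\ge 2$ is precisely what guarantees $\lambda_i(\mathcal{L}(G)) > 0$ (the normalized Laplacian of a connected graph — or more generally a graph with no isolated vertices, guaranteed by $\delta>0$ — has a one-dimensional kernel from the constant-degree vector), so the quotient is well-defined.

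I expect this to be essentially immediate once the congruence $L(G) = D(G)^{1/2}\mathcal{L}(G)D(G)^{1/2}$ is recognized as an instance of Lemma \ref{lemma:inertia} with the square ($m=n$) choice $X = D(G)^{1/2}$. The only point requiring mild care is the degenerate-seeming application of Lemma \ref{lemma:inertia} when $X$ is square and invertible (so that $\mu_i$ is forced to equal $\lambda_i(A)$ exactly and the statement reduces to the classical fact that congruence by $X$ scales eigenvalues by a factor in $[\lambda_{\min}(X^TX),\lambda_{\max}(X^TX)]$); one should confirm that the hypothesis $m\le n$ is met with equality and that nothing degenerates. There is no substantive obstacle here — the lemma is a clean packaging of the interaction between the two Laplacians, and the main content has already been externalized to Lemma \ref{lemma:inertia}.
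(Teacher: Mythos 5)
Your proof is correct and follows essentially the same route as the paper: both arguments are a single application of Lemma \ref{lemma:inertia} to the congruence relating $L(G)$ and $\mathcal{L}(G)$, the only (immaterial) difference being that the paper takes $X=D(G)^{-1/2}$ acting on $A=L(G)$ to get $\theta_i\in[1/\Delta,1/\delta]$, while you take $X=D(G)^{1/2}$ acting on $A=\mathcal{L}(G)$ to get $\theta_i\in[\delta,\Delta]$ directly.
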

\begin{proof}
The claim follows directly from Lemma \ref{lemma:inertia}, since $\mathcal{L}(G)=D(G)^{-\frac{1}{2}} L(G) D(G)^{-\frac{1}{2}}$, and \[(D(G)^{-\frac{1}{2}})^T D(G)^{-\frac{1}{2}}=D(G)^{-1}\] has minimal eigenvalue $\frac{1}{\Delta}$ and maximal eigenvalue $\frac{1}{\delta}$. 
\end{proof}

\begin{proposition}\label{prop:iterated_subdivision}
Let $G$ be a connected graph with minimum degree at least $2$ and maximum degree $\Delta$. Then,
\[
a(s^k(G))\geq \frac{\min\left\{\frac{2}{\Delta}a(G),8\right\}}{4^k}.
\]
\end{proposition}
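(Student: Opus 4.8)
The plan is to prove Proposition~\ref{prop:iterated_subdivision} by induction on $k$, using Lemma~\ref{lemma:normalized_subdivision} to control the normalized Laplacian under a single subdivision step, and Lemma~\ref{lemma:normalized_vs_unnormalized} to translate between the normalized and unnormalized spectral gaps at the beginning and end. The key observation is that $s^k(G)$ has minimum degree $2$ and maximum degree $\Delta$ (subdivision vertices have degree $2$, and the original vertices retain their degrees), so the degree bounds needed for Lemma~\ref{lemma:normalized_vs_unnormalized} are uniform in $k$.

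First I would establish the single-step recursion for the normalized spectral gap. Let $\alpha(H)$ denote the smallest nonzero eigenvalue of $\mathcal{L}(H)$. Lemma~\ref{lemma:normalized_subdivision} says that if $\lambda\neq 1$ is an eigenvalue of $\mathcal{L}(s(G))$, then $2\lambda(2-\lambda)$ is an eigenvalue of $\mathcal{L}(G)$. I want to run this in reverse to lower-bound $\alpha(s(G))$ in terms of $\alpha(G)$. The map $\lambda\mapsto 2\lambda(2-\lambda)$ is increasing on $[0,1]$, so for the relevant small eigenvalues, $2\alpha(s(G))(2-\alpha(s(G)))\geq \alpha(G)$, and since $2-\alpha(s(G))\leq 2$, this gives $\alpha(s(G))\geq \tfrac14\alpha(G)$. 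Iterating $k$ times yields $\alpha(s^k(G))\geq 4^{-k}\alpha(G)$. The mild subtlety I expect to watch for here is justifying that the second-smallest eigenvalue of $\mathcal{L}(s(G))$ genuinely corresponds (under the quadratic map) to the second-smallest eigenvalue of $\mathcal{L}(G)$, rather than to some larger eigenvalue or to the excluded value $\lambda=1$; this requires checking the multiplicity bookkeeping in Lemma~\ref{lemma:normalized_subdivision} and confirming $\alpha(s(G))<1$.

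Next I would convert between normalized and unnormalized gaps. By Lemma~\ref{lemma:normalized_vs_unnormalized} applied to $G$ (with $i=2$), $\alpha(G)=\lambda_2(\mathcal{L}(G))\geq \tfrac1\Delta\lambda_2(L(G))=\tfrac1\Delta a(G)$. Applying the same lemma to $s^k(G)$, whose minimum degree is $2$, gives $a(s^k(G))=\lambda_2(L(s^k(G)))\geq 2\,\lambda_2(\mathcal{L}(s^k(G)))=2\,\alpha(s^k(G))$. Chaining these with the iterated bound $\alpha(s^k(G))\geq 4^{-k}\alpha(G)$ produces
\[
a(s^k(G))\geq 2\cdot 4^{-k}\alpha(G)\geq 2\cdot 4^{-k}\cdot\tfrac1\Delta a(G)=\frac{2a(G)/\Delta}{4^k}.
\]

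Finally I would account for the minimum appearing in the statement. The displayed chain above gives the $\tfrac2\Delta a(G)$ branch, but the single-step bound $\alpha(s(G))\geq\tfrac14\alpha(G)$ is only the correct estimate while the eigenvalue stays in the regime where the quadratic map behaves well; once $\alpha(G)$ is large, the true bound saturates because eigenvalues of $\mathcal{L}(s^k(G))$ cannot exceed $2$. Concretely, when $\tfrac1\Delta a(G)$ is large, one instead uses the unconditional lower bound on the normalized gap of a connected graph to cap the quantity, producing the constant $8/4^k$ term. The main obstacle I anticipate is precisely this saturation analysis: I must argue carefully that at each subdivision step the relevant small eigenvalue of $\mathcal{L}(s(G))$ stays below the threshold ($\lambda<1$) where $\lambda\mapsto 2\lambda(2-\lambda)$ is monotone increasing, so that the recursion $\alpha(s(G))\geq\tfrac14\min\{\alpha(G),4\}$ holds, and then verify that iterating this capped recursion yields exactly $4^{-k}\min\{\alpha(G),4\}$, which after the factor-$2$ unnormalization and the $\tfrac1\Delta$ conversion gives the stated $\min\{\tfrac2\Delta a(G),8\}/4^k$.
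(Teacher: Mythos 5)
Your plan is correct and follows essentially the same route as the paper: an induction on $k$ establishing $\lambda_2(\mathcal{L}(s^k(G)))\ge 4^{-k}\min\{\lambda_2(\mathcal{L}(G)),4\}$ via Lemma~\ref{lemma:normalized_subdivision} (with exactly the case split you anticipate to ensure $\lambda<1$ before applying the lemma), followed by the degree-based conversion of Lemma~\ref{lemma:normalized_vs_unnormalized} at both ends. The only simplification worth noting is that no ``multiplicity bookkeeping'' or monotonicity of $\lambda\mapsto 2\lambda(2-\lambda)$ is needed: once $0<\lambda<1$, the lemma says $2\lambda(2-\lambda)$ is a \emph{nonzero} eigenvalue of $\mathcal{L}(s^{k-1}(G))$, hence automatically at least the smallest nonzero eigenvalue, which is all the recursion requires.
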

\begin{proof}
First, we will show that
\[
    \lambda_2(\mathcal{L}(s^k(G)))\geq \frac{\min\{\lambda_2(\mathcal{L}(G)),4\}}{4^k}.
\]
We argue by induction on $k$. For $k=0$ the claim is trivial. Let $k\geq 1$, and let $\lambda=\lambda_2(\mathcal{L}(s^{k}(G)))$.
If $\lambda\geq (1/4)^{k-1}$, we are done. Otherwise, assume $\lambda<(1/4)^{k-1}$. In particular, $\lambda< 1$. Thus, by Lemma \ref{lemma:normalized_subdivision}, $2\lambda (2-\lambda)$ is an eigenvalue of $\mathcal{L}(s^{k-1}(G))$,
and it is nonzero as $0<\lambda<1$ ($\lambda$ is positive since $G$ is connected).
Therefore, we find that
\[
    \lambda_2(\mathcal{L}(s^{k-1}(G))) \leq 2\lambda(2-\lambda).
\]  
By the induction hypothesis, we obtain
\begin{align*}
\lambda &\geq \frac{\lambda_2(\mathcal{L}(s^{k-1}(G)))}{2(2-\lambda)} \geq  \frac{\lambda_2(\mathcal{L}(s^{k-1}(G)))}{4} \\
&\geq \frac{1}{4}\frac{\min\{\lambda_2(\mathcal{L}(G)),4\}}{4^{k-1}}= \frac{\min\{\lambda_2(\mathcal{L}(G)),4\}}{4^k}.
\end{align*}
Finally, by Lemma \ref{lemma:normalized_vs_unnormalized}, we obtain
\begin{multline*}
a(s^k(G))=\lambda_2(L(s^k(G)))\geq 2 \lambda_2(\mathcal{L}(s^k(G))) \geq 2\frac{\min\{\lambda_2(\mathcal{L}(G)),4\}}{4^k}
\\
\geq \frac{\min\left\{\frac{2}{\Delta}\lambda_2(L(G)),8\right\}}{4^k} = \frac{\min\left\{\frac{2}{\Delta}a(G),8\right\}}{4^k}.
\end{multline*}
\end{proof}

The next lemma establishes that algebraic connectivity is monotone with respect to edge subdivision.

\begin{lemma}\label{lemma:subdivision_second_direction}
Suppose that $G'=(V',E')$ is obtained from $G=(V,E)$ by a subdivision of an edge $e=uv$ of $G$ with $1$ new vertex $w$. Then,
$
    a(G')\leq a(G).
$
\end{lemma}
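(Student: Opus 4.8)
The plan is to use the variational (Rayleigh-quotient) characterization of the algebraic connectivity. Recall that for a connected graph $H=(V_H,E_H)$,
\[
a(H)=\min_{\substack{y\perp \mathbf{1},\, y\neq 0}}\frac{\sum_{\{i,j\}\in E_H}(y_i-y_j)^2}{\sum_i y_i^2},
\]
and the minimum is attained at a Fiedler vector. I would fix such a vector $x\in\Rea^{V}$ for $G$, so that $x\perp\mathbf{1}$, $x\neq 0$, and $x^T L(G) x=\sum_{\{i,j\}\in E}(x_i-x_j)^2=a(G)\,\|x\|^2$, and then exhibit a good test vector on $V'=V\cup\{w\}$ to bound $a(G')$ from above.

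The natural test vector keeps $x$ on the old vertices and places the new vertex $w$ at the midpoint of the edge it subdivides: set $x'_v=x_v$ for $v\in V$ and $x'_w=\tfrac12(x_u+x_v)$. The point of this choice is that the two new edges $uw$ and $wv$ together contribute exactly half of what the original edge $uv$ contributed: a direct computation gives $(x'_u-x'_w)^2+(x'_w-x'_v)^2=\tfrac12(x_u-x_v)^2$, while every other edge contributes the same as in $G$. Hence the Laplacian quadratic form can only decrease, namely $(x')^T L(G')x'=x^T L(G)x-\tfrac12(x_u-x_v)^2\le a(G)\,\|x\|^2$.

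The one subtlety is that $x'$ need not be orthogonal to $\mathbf{1}$ on $V'$, since $\sum_{v\in V'}x'_v=\tfrac12(x_u+x_v)$ may be nonzero. I would correct this by subtracting the mean, setting $\tilde{x}'=x'-\bar{x}'\,\mathbf{1}$ with $\bar{x}'=\tfrac{1}{|V'|}\sum_{v\in V'}x'_v$; this leaves the Laplacian quadratic form unchanged, as it depends only on coordinate differences, so $(\tilde{x}')^T L(G')\tilde{x}'=(x')^T L(G')x'$. The point that needs checking is that the denominator does not drop below $\|x\|^2$: writing $n=|V|$, an explicit computation gives
\[
\|\tilde{x}'\|^2=\|x'\|^2-|V'|\,\bar{x}'^2=\|x\|^2+\frac{n}{n+1}\cdot\frac{(x_u+x_v)^2}{4}\ \ge\ \|x\|^2 ,
\]
so in particular $\tilde{x}'\neq 0$ is a valid test vector.

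Combining these, since $(\tilde{x}')^T L(G')\tilde{x}'\ge 0$ and $\|\tilde{x}'\|^2\ge\|x\|^2>0$, the Rayleigh quotient of $\tilde{x}'$ satisfies
\[
a(G')\ \le\ \frac{(\tilde{x}')^T L(G')\tilde{x}'}{\|\tilde{x}'\|^2}\ \le\ \frac{x^T L(G)x}{\|x\|^2}\ =\ a(G),
\]
which is the desired inequality. The only step requiring care is the orthogonalization bookkeeping in the third paragraph: choosing the midpoint value for $x'_w$ is precisely what forces both the numerator to decrease and the corrected denominator to stay at least $\|x\|^2$, and I expect verifying this balance to be the main (though modest) obstacle.
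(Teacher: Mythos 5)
Your proof is correct, and it takes a genuinely different route from the paper's. You use the Rayleigh-quotient characterization of $a(\cdot)$ directly: extend a Fiedler vector $x$ of $G$ to $G'$ by placing $w$ at the midpoint value $\tfrac12(x_u+x_v)$, observe that the two new edges contribute $\tfrac12(x_u-x_v)^2$ in place of $(x_u-x_v)^2$ so the quadratic form does not increase, and then check that re-centering to restore orthogonality to $\mathbf{1}$ can only increase the denominator (your computation $\|\tilde{x}'\|^2=\|x\|^2+\tfrac{n}{n+1}\cdot\tfrac{(x_u+x_v)^2}{4}$ is right). The paper instead works at the matrix level: it writes the $1$-dimensional rigidity matrices as $R(G,p)=R(G',p')X$, where $X$ merges the two columns corresponding to the subdivided edge, deduces $L^{-}(G,p)=X^TL^{-}(G',p')X$, and applies the Higham--Cheng inertia lemma with $\lambda_1(X^TX)\ge 1$ to conclude $\lambda_i(L^{-}(G,p))\ge\lambda_i(L^{-}(G',p'))$ for \emph{every} index $i$, translating back to $a(G)\ge a(G')$ via the index shift $|E|-|V|+2=|E'|-|V'|+2$. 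Your argument is more elementary and self-contained (one explicit test vector, no stiffness-matrix machinery), whereas the paper's yields a comparison of the entire spectra of the lower stiffness matrices rather than just the second eigenvalue, and stays within the rigidity-matrix formalism used throughout the rest of the paper. One cosmetic point: you state the Rayleigh characterization ``for a connected graph,'' but in fact $\lambda_2(L(H))=\min\{y^TL(H)y/\|y\|^2 : y\perp\mathbf{1},\,y\neq 0\}$ holds for any graph, so your argument covers the disconnected case too (where the claim is anyway trivial since both sides vanish); it is worth either noting this or disposing of the disconnected case separately, as the paper does.
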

\begin{proof}
If $G$ is not connected, then $a(G)=a(G')=0$, and in particular $a(G')\leq a(G)$. Therefore, we may assume that $G$ is connected.

Consider the $1$-dimensional rigidity matrices $R(G',p')$ with respect to some embedding $p':V'\to\Rea$ satisfying $p'(u)<p'(w)<p'(v)$, and $R(G,p)$ with respect to the restriction of $p'$ on $V=V'\setminus\{w\}$. Suppose without loss of generality that the last two columns in $R(G',p')$ are $\bv_{uw},\bv_{wv}$, the last column in $R(G,p)$ is $\bv_{uv}$, and that all the remaining columns are equal. Namely, $R(G,p)$ is obtained from $R(G',p')$ by summing up the last two columns into one. In other words,
\[
R(G,p) = R(G',p')X
\]
where $X\in\Rea^{E'\times E}$ is a matrix whose rows are the $|E|$ standard unit row vectors, having the last unit row vector repeated twice.
Therefore,
\[
L^{-}(G,p)= X^T L^{-}(G',p') X.
\]
Using Lemma \ref{lemma:inertia} and $\lambda_1(X^TX) \ge 1$ we find that for every $1\le i\le |E|$
\[
\lambda_i(L^{-}(G,p)) \ge \lambda_i(L^{-}(G',p')).
\]
By letting $i=|E|-|V|+2=|E'|-|V'|+2$ we find using \eqref{eqn:lam_k} that
\[
a(G) = \lambda_2(L(G,p)) = \lambda_i(L^{-}(G,p)) \ge \lambda_i(L^{-}(G',p')) = \lambda_i(L^{-}(G',p')) = a(G'),
\]
as claimed.
\end{proof}

\begin{proof}
[Proof of Theorem~\ref{thm:subdivided_spectral_gap}]
Let $k=\lceil\log_2(m+1)\rceil$. Note that $s^k(G)$ is obtained from $G$ by adding $\sum_{j=1}^k 2^{j-1}= 2^k-1$ vertices on each edge of $G$. Since $2^k-1\geq m$, $s^k(G)$ can be obtained from $G'$ by subdividing some of its edges. Hence, by the monotonicity established in Lemma \ref{lemma:subdivision_second_direction}, we have that
\[
a(G')\geq a(s^k(G)).
\]
Finally, by Proposition \ref{prop:iterated_subdivision}, we obtain
\[
a(G')\geq a(s^k(G))\geq \frac{\min\left\{\frac{2}{\Delta}a(G),8\right\}}{4^k}
\geq \frac{\min\left\{\frac{2}{\Delta}a(G),8\right\}}{4^{\log_2(m+1)+1}}
=\frac{\min\left\{\frac{1}{\Delta}a(G),4\right\}}{2 (m+1)^2}.
\]
\end{proof}

Next, we apply Theorem~\ref{thm:subdivided_spectral_gap} to show the existence of ($1$-dimensional) expander graphs with a desired degree sequence that are used subsequently as building blocks in the construction of $d$-dimensional rigidity expanders in the proof of Theorem~\ref{thm:rigidity_expanders}.  
\begin{corollary}\label{cor:expanders_with_many_degree_2_vertices}
For every $d\geq 1$ there exists $c=c(d)>0$ and an infinite family of  $2dn$-vertex bipartite graphs $(H_n)_{n=3}^{\infty}$ such that $a(H_n)\geq c$ and each part consists of $n$ vertices of degree $3$ and $(d-1)n$ vertices of degree $2$.    
\end{corollary}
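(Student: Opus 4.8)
The plan is to realise each $H_n$ as a subdivision of a $3$-regular bipartite expander, choosing the number of inserted vertices on each edge so as to hit the prescribed degree sequence, and then to invoke Theorem~\ref{thm:subdivided_spectral_gap} to control the algebraic connectivity uniformly. To begin, I would fix an infinite family of $3$-regular bipartite graphs $\{G_n\}$ with colour classes $L_n,R_n$ of size $n$ each (``black'' and ``white'', say) and with $a(G_n)\ge a_0$ for some absolute constant $a_0>0$; such families exist by standard constructions, e.g.\ random $3$-regular bipartite graphs or the bipartite Ramanujan graphs of \cite{MSS}, for which the second-largest adjacency eigenvalue is bounded away from $3$ and hence $a(G_n)=3-\mu_2$ is bounded away from $0$. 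Each $G_n$ is connected, has minimum degree $3\ge 2$, maximum degree $\Delta=3$, and exactly $3n$ edges.

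The combinatorial heart of the argument is the choice of subdivision. The key observation is that if an edge joining a black vertex to a white vertex is subdivided with an \emph{even} number $2t$ of internal vertices, then its two endpoints keep their colours and the inserted path contributes exactly $t$ new black and $t$ new white vertices, all of degree $2$. Writing $q=\lfloor(d-1)/3\rfloor$ and $r=(d-1)-3q\in\{0,1,2\}$, I would assign to each of the $3n$ edges $e$ a value $t_e\in\{q,q+1\}$, giving $t_e=q+1$ to exactly $rn\le 2n$ of the edges and $t_e=q$ to the rest, and then subdivide edge $e$ with $2t_e$ vertices. This forces $\sum_e t_e=3qn+rn=(d-1)n$, and no edge receives more than $m:=2q+2=2\lfloor(d-1)/3\rfloor+2$ internal vertices, a quantity depending only on $d$. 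Let $H_n$ denote the resulting graph. Since subdividing an edge leaves the degrees of its endpoints unchanged, the degree-$3$ vertices of $H_n$ are exactly $L_n\cup R_n$; $H_n$ is bipartite because the core is bipartite and all subdivisions are even; and each colour class consists of its $n$ inherited degree-$3$ vertices together with $\sum_e t_e=(d-1)n$ degree-$2$ subdivision vertices. Thus $H_n$ has $2dn$ vertices with exactly the required degree sequence.

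Finally, because $H_n$ is obtained from $G_n$ by subdividing each edge with at most $m=m(d)$ vertices, Theorem~\ref{thm:subdivided_spectral_gap} gives
\[
a(H_n)\ \ge\ \frac{\min\{\tfrac13 a(G_n),4\}}{2(m+1)^2}\ \ge\ \frac{\min\{\tfrac13 a_0,4\}}{2(m(d)+1)^2}\ =:\ c(d)\ >\ 0,
\]
which is the claimed uniform lower bound; the family $(H_n)$ is infinite since $2dn\to\infty$. (For $d=1$ no subdivision is needed and $H_n=G_n$.)

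The step I expect to require the most care is the second one. One must verify simultaneously that even subdivisions preserve bipartiteness---here it is essential that the core be \emph{bipartite}, since even subdivisions leave the parities of all cycle lengths unchanged---that they split the inserted degree-$2$ vertices evenly between the two colour classes, and that the exact per-side target $(d-1)n$ can be met with per-edge subdivision numbers bounded purely in terms of $d$. This last boundedness is precisely what keeps $m(d)$, and hence the constant $c(d)$ coming out of Theorem~\ref{thm:subdivided_spectral_gap}, independent of $n$; once the even-subdivision device is in place, the remaining verification of counts is routine bookkeeping.
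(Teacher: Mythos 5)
Your proposal is correct and follows essentially the same route as the paper: start from a family of $2n$-vertex $3$-regular bipartite expanders, subdivide each edge with an even number of new vertices spread as evenly as possible (totalling $2(d-1)n$, at most $O_d(1)$ per edge), and invoke Theorem~\ref{thm:subdivided_spectral_gap} with $\Delta=3$. Your bookkeeping with $t_e\in\{q,q+1\}$ and the observation that an even subdivision deposits $t$ new vertices in each colour class just makes explicit what the paper states more briefly, and the resulting constant $c(d)$ matches the paper's up to the choice of the per-edge bound.
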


\begin{proof}
We start with an infinite family $(\tilde{H}_n)_{n\ge 3}$ of $2n$-vertex $3$-regular bipartite graphs with $a(\tilde H_n)  \geq \epsilon$ for some $\epsilon>0$. For example, random $2n$-vertex $3$-regular bipartite graphs  have this property with high probability and with a nearly optimal algebraic connectivity of $3-2\sqrt{2}-o(1)$ as $n\to\infty$ ~\cite{brito2022spectral}.

To construct $H_n$ we subdivide the $3n$ edges of $\tilde H_n$ with a total number of $2(d-1)n$ new vertices such that (i) every edge is subdivided with an even number of new vertices and (ii) the new vertices are spread (in pairs) as uniformly as possible among the edges. In particular, no edge is subdivided with more than $2\cdot \lceil (d-1)n /(3n)\rceil=2\lceil (d-1)/3 \rceil$ new vertices. Note that $H_n$ is bipartite with $n$ old vertices of degree $3$ and $(d-1)n$ new vertices of degree $2$ in each part, since all the edges were subdivided with an even number of vertices. In addition, by Theorem \ref{thm:subdivided_spectral_gap}, we have that 
\[
a(H_n) \ge  \frac{\epsilon}{6\left(2 \left\lceil\frac{(d-1)}{3}\right\rceil+1\right)^2}.
\]
\end{proof}

\section{Existence of rigidity expander graphs}\label{sec:rigidity_expanders}
We turn to combine the results in the previous sections to establish the existence of a family of $k$-regular $d$-rigidity expanders for every $k\ge 2d+1$.
\begin{proof}
[Proof of Theorem~\ref{thm:rigidity_expanders}]
We will construct, for all $k\geq 2d+1$ and sufficiently large $n$, a $k$-regular graph $G_n$ on $2d^2 n$ vertices satisfying $a(G_n)\geq c(d)/2$ (where $c(d)>0$ is the constant from Corollary \ref{cor:expanders_with_many_degree_2_vertices}). 

We start with the main case $k=2d+1$. The vertex set $V$ of the graph $G=G_n$ is a disjoint union of $d$ sets $V=A_1\cup\cdots\cup A_d,$ where each $A_i,~i\in[d]$, is itself partitioned into $d$ disjoint subsets $A_i = B_{i,1}\cup\cdots\cup B_{i,d}$. All the $d^2$ ``atomic" vertex subsets $B_{i,j}, ~i,j\in[d]$ are of size $2n$. In particular, $|V|=2d^2n$.

We can and will describe the edge set of $G$ by specifying all the $\binom{d+1}2$ subgraphs  $G[A_i],~i\in[d]$ and $G(A_i,A_j),~1\le i<j\le d$. 
The graph $G(A_i,A_j)$, for $1\le i < j \le d$, is constructed such that it is isomorphic to the $4dn$-vertex bipartite graph $H_{2n}$ from Corollary \ref{cor:expanders_with_many_degree_2_vertices}. We do not specify the isomorphism map between $H_{2n}$ and $G(A_i,A_j)$ precisely but we do require that (i) one part of $H_{2n}$ will be mapped to $A_i$ and the other part to $A_j$, and (ii) the $4n$ vertices of degree $3$ in $H_{2n}$ will be mapped to $B_{i,j}$ and $B_{j,i}$.

Similarly, the graph $G[A_i],~i\in[d]$, is isomorphic to the $2dn$-vertex graph $H_n$ from Corollary \ref{cor:expanders_with_many_degree_2_vertices}. We require that in the isomorphism map from $H_n$ to $G[A_i]$, the $2n$ vertices of degree $3$ in $H_n$ are mapped to $B_{i,i}$ (here we do not use the bipartite structure of $H_n$).

Note that Corollary \ref{cor:expanders_with_many_degree_2_vertices} asserts that $a(G[A_i])\ge c(d),~\forall i\in[d],$ and $a(G(A_i,A_j))\ge c(d),~\forall 1\le i<j\le d.$ By Theorem~\ref{thm:lower_bound_general_d}, using the partition $A_1,...,A_d$, we find that
\[a_d(G)\ge \frac{c(d)}{2},
\] (and twice this lower bound for $d=2$ by Remark \ref{rem:2d}).

We conclude by showing that $G$ is $(2d+1)$-regular. Indeed, let $v\in B_{i,j}$ be some vertex of $G$. If $i=j$, then $v$ is contained in $3$  edges from $G[A_i]$, and $2$ edges from each of the $d-1$ graphs $G(A_i,A_{i'}),~\forall i'\ne i$. Here we slightly abuse the notation by letting $G(A_i,A_{i'})=G(A_{i'},A_i)$. Similarly, if $i\ne j$ then $v$ is contained in $3$ edges from $G(A_i,A_j)$, $2$ edges from $G[A_i]$ and $2$ from each of the $d-2$ graphs $G(A_i,A_{i'}),~\forall i'\ne i,j$.

To derive the claim for $k>2d+1$, recall that Dirac famously proved that every $m$-vertex graph with minimum degree at least $m/2$ is Hamiltonian. In addition, if $m$ is even then the edges in the odd places of Dirac's Hamiltonian path form a perfect matching. 
Therefore, if $n$ is sufficiently large then the complement $\bar G$ of the $(2d+1)$-regular $2d^2 n$-vertex graph $G=G_n$ contains $k-(2d+1)$ edge-disjoint perfect matchings. We can add these matchings to $G$, and obtain a $k$-regular graph $G'_n$ which satisfies $a_d(G'_n) \ge a_d(G_n)\geq c(d)/2$ by the monotonicity of $a_d$ with respect to adding edges. 
\end{proof}

\section{An upper bound on $a_d(G)$}\label{sec:upper_bound}

In this section we give a proof of Theorem \ref{thm:upper_bound}, stating that for every graph $G$, $a_d(G)\leq a(G)$. In Section \ref{sec:concluding} below we discuss whether this theorem is tight.

We will need the following simple result about stiffness matrices.
\begin{lemma}[Jord\'an-Tanigawa {\cite[3.2]{jordan2022rigidity}}]
\label{lemma:quadratic_form}
Let $G=(V,E)$ be a graph, and let $p:V\to \Rea^d$ and $x\in \Rea^{d|V|}$. Then
\[
x^T L(G,p) x = \sum_{\{u,v\}\in E} \left\langle x(u)-x(v), d_{uv}\right\rangle^2,
\]
where $x(u)\in \Rea^d$ consists of the $d$ coordinates of $x$ corresponding to the vertex $u$. 
\end{lemma}

We will also need the following lemma from \cite{lew2022d}, that states that when computing $a_d(G)$, it is enough to consider maps $p:V\to \Rea^d$ that are embeddings (that is, injective).

\begin{lemma}[{\cite[Lemma 2.4]{lew2022d}}]\label{lemma:a_d_equivalent}
Let $G=(V,E)$ be a graph, and $d\geq 1$. Then
\[
a_d(G)=\sup\left\{ \lambda_{\binom{d+1}{2}+1}(L(G,p)) \middle| \, p: V\to \Rea^d, \, \text{ $p$ is injective} \right\}.
\]
\end{lemma}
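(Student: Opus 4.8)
The inequality $\sup\{\lambda_{\binom{d+1}{2}+1}(L(G,p)):p\text{ injective}\}\le a_d(G)$ is immediate, since injective maps form a subset of all maps $p\colon V\to\Rea^d$. The plan is therefore to establish the reverse inequality, and for this it suffices to show that for every $p\colon V\to\Rea^d$ and every $\epsilon>0$ there is an \emph{injective} $p'$ with $\lambda_{\binom{d+1}{2}+1}(L(G,p'))\ge \lambda_{\binom{d+1}{2}+1}(L(G,p))-\epsilon$; taking the supremum over $p$ then yields $a_d(G)\le \sup\{\cdots:p\text{ injective}\}$.

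The one genuine difficulty is that $p\mapsto L(G,p)$ is \emph{not} continuous: the normalized direction $d_{uv}$ jumps as $p(u)$ passes through $p(v)$, so a naive perturbation argument fails exactly at the degenerate maps one wants to approximate. I would sidestep this by first discarding the offending edges. Let $E_0=\{\{u,v\}\in E: p(u)=p(v)\}$ and set $G'=(V,E\setminus E_0)$. For every edge in $E_0$ the column $\bv_{u,v}$ vanishes, so such edges contribute nothing to $R(G,p)R(G,p)^T$; hence $L(G,p)=L(G',p)$. Crucially, $p$ now has no coincident edges \emph{of $G'$}, and the set of maps with this property, namely $\{q:q(u)\neq q(v)\text{ for all }\{u,v\}\in E\setminus E_0\}$, is open, so $q\mapsto L(G',q)$ is continuous on a neighborhood of $p$.

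Within that neighborhood I would invoke density of injective maps: the non-injective maps lie in a finite union of proper linear subspaces (one for each vertex pair), so injective maps are dense in $\Rea^{d|V|}$. I can thus pick an injective $p'$ as close to $p$ as desired while remaining in the continuity domain of $G'$ (injectivity already guarantees no coincidences of any pair, in particular none of $G'$). Since eigenvalues of symmetric matrices are $1$-Lipschitz in the operator norm by Weyl's inequality, choosing $p'$ close enough gives $\lambda_{\binom{d+1}{2}+1}(L(G',p'))\ge \lambda_{\binom{d+1}{2}+1}(L(G',p))-\epsilon=\lambda_{\binom{d+1}{2}+1}(L(G,p))-\epsilon$.

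The final step restores the discarded edges. Writing $L(G,p')=L(G',p')+\sum_{\{u,v\}\in E_0}\bv_{u,v}\bv_{u,v}^{T}$, where each rank-one summand is formed with respect to $p'$ and is positive semidefinite, we get $L(G,p')\succeq L(G',p')$. Eigenvalue monotonicity then yields $\lambda_{\binom{d+1}{2}+1}(L(G,p'))\ge \lambda_{\binom{d+1}{2}+1}(L(G',p'))$, and chaining the two inequalities completes the bound. The main obstacle is precisely the discontinuity at edge coincidences; the device of deleting coincident edges and reinstating them through positive semidefiniteness is what circumvents it, and it also clarifies why $p'$ need only be injective rather than generic.
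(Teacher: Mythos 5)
Your argument is correct. Note that the paper does not prove this lemma itself --- it is quoted from \cite[Lemma 2.4]{lew2022d} --- so there is no in-paper proof to compare against; judged on its own, your proof is sound and self-contained. You correctly identify the only real obstacle (the discontinuity of $p\mapsto L(G,p)$ at maps with coincident adjacent vertices), and your device of deleting the edges of $E_0$ (whose columns of $R(G,p)$ vanish, so $L(G,p)=L(G',p)$), perturbing to a nearby injective $p'$ inside the open continuity domain of $L(G',\cdot)$, and then reinstating the deleted edges via $L(G,p')\succeq L(G',p')$ and eigenvalue monotonicity, gives exactly the chain $\lambda_{\binom{d+1}{2}+1}(L(G,p'))\ge\lambda_{\binom{d+1}{2}+1}(L(G',p'))\ge\lambda_{\binom{d+1}{2}+1}(L(G,p))-\epsilon$ needed for the nontrivial inequality.
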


\begin{proof}[Proof of Theorem \ref{thm:upper_bound}]

Let $p:V\to \Rea^d$ be an injective map.  We want to show that $\lambda_{\binom{d+1}{2}+1}(L(G,p))\leq a(G)$.
If $(G,p)$ is not infinitesimally rigid, then $\lambda_{\binom{d+1}{2}+1}(L(G,p))=0\leq a(G)$, as wanted. Therefore, we may assume that $(G,p)$ is infinitesimally rigid. For convenience, assume $V=[n]$. Let $y\in \Rea^n$ be an eigenvector of $L(G)$ corresponding to eigenvalue $\lambda_2(L(G))=a(G)$. Assume $\|y\|=1$.

For $z\in S^{d-1}$, let $x_z\in \Rea^{dn}$ be defined by
\[
    x_z(i)= y_i z\in \Rea^d
\]
for $i\in[n]$. (Intuitively, all vertices move in direction $z$, vertex $i$ moves at speed $y_i$). 
Then, by Lemma \ref{lemma:quadratic_form},
\[
    x_z^T L(G,p) x_z= \sum_{ij\in E} (y_i-y_j)^2 \left\langle z,d_{ij}\right\rangle^2.
\]
Let $z$ be uniform in $S^{d-1}$. Clearly, if $y\in S^{d-1}$ then $\mathbb{E}_z[\left\langle z,y\right\rangle^2]$ does not depend on $y$ since $z$ is spherically symmetric. Therefore,
\[
\mathbb{E}_z[\left\langle z,y\right\rangle^2] = \frac{1}{d} \mathbb{E}_z \left[\sum_{i=1}^{d}\left\langle z,u_i\right\rangle^2\right] = \frac {\mathbb E[\|z\|^2]}d = \frac 1d,
\]
where $u_1,...,u_d$ is some orthonormal basis for $\Rea^d.$

Since $p$ is injective, we have $\|d_{ij}\|=1$ for all $i<j$. That is, $d_{ij}\in S^{d-1}$, and therefore $\mathbb{E}_z[\left\langle z,d_{ij}\right\rangle^2]=1/d$ for all $i<j$. By linearity of expectation, 
\[
\mathbb{E}_z[x_z^T L(G,p) x_z]=\frac 1d \sum_{ij\in E} (y_i-y_j)^2 = a(G)/d.
\]
Similarly,
\[
    x_z^T L(K_n,p) x_z= \sum_{i<j} (y_i-y_j)^2 \left\langle z,d_{ij}\right\rangle^2.
\]
Note that, since $y\perp \textbf{1}$ and $\|y\|=1$, 
we have $\sum_{i<j} (y_i-y_j)^2 = y^T L(K_n)y  = n$. Thus, we obtain

\[
\mathbb{E}_z[x_z^T L(K_n,p) x_z]=\frac 1d \sum_{i<j} (y_i-y_j)^2=  n/d.
\]
Let 
\[
    Z(z)= n x_z^T L(G,p) x_z - a(G) x_z^T L(K_n,p) x_z,
\]
and note that $\mathbb E_z[Z(z)]=0$. In addition, for any  $i,j$ satisfying $y_i\ne y_j$ there holds 
$$\mathbb P(x_z^T L(K_n,p) x_z=0) \le \mathbb P(\langle z,d_{ij}\rangle ^2 =0)=0.$$
Thus, a vector $z\in S^{d-1}$ satisfies with probability $1$ that $x_{z}\notin\ker L(K_n,p)$, and with a positive probability that $Z(z)\leq 0$. In particular, there is some $z_0\in S^{d-1}$ such that $Z(z_0)\leq 0$ and $x_{z_0}\notin\ker L(K_n,p)$. 
Let $x=x_{z_0}$, and $0\ne q\in \Rea^{dn}$ be the orthogonal projection of $x$ on the image of $L(K_n,p)$, so $x-q$ is the orthogonal projection of $x$ on the space of trivial infinitesimal motions. 
Thus, it holds that
\begin{align}
\nonumber q^T L(G,p) q &=~ x^T L(G,p) x \\ 
\label{eq:Z}  &\le~ \frac {a(G)}{n} x^TL(K_n,p)x\\
\nonumber  &= ~\frac {a(G)}{n} q^TL(K_n,p)q \\
\label{eq:n} &\le~ a(G)\|q\|^2. 
\end{align}
The first equality holds as $(G,p)$ is infinitesimally rigid (and so $\ker(L(G,p))=\ker(L(K_n,p))$).
The inequality \eqref{eq:Z} follows from $Z(z_0)\le 0$, and the transition to \eqref{eq:n} follows since the largest eigenvalue of $L(K_n,p)$ is $n$ ~\cite[Lemma 4.3]{jordan2022rigidity}. 

Since $q$ is orthogonal to the $\binom {d+1}2$-dimensional kernel of $L(G,p)$
, we find using the variational characterization of eigenvalues that
\[
    \lambda_{\binom{d+1}{2}+1}(L(G,p)) \leq \frac{q^T L(G,p) q}{\|q\|^2} = a(G).
    \]
Therefore, by Lemma \ref{lemma:a_d_equivalent}, $a_d(G)\leq a(G)$.
\end{proof}

\section{Minimally rigid graphs}\label{sec:minimally_rigid}

In this section we study the extremal values of the $d$-dimensional algebraic connectivity of minimally $d$-rigid graphs.   In Proposition \ref{prop:upper_bound_minimmaly_rigid} we prove the upper bound $a_d(T)\leq 1$ for minimally $d$-rigid graphs, and in 
Proposition \ref{prop:lower_bound_star_graph} we show that the upper bound is attained for ``generalized star" graphs. This establishes the proof of Theorem \ref{thm:upper_bound_minimally_rigid}. The section is concluded with a discussion about generalized path graphs and their algebraic connectivity.

\begin{proposition}\label{prop:upper_bound_minimmaly_rigid}
Let $d\geq 1$, and let $T\neq K_2, K_3$ be a minimally $d$-rigid graph. Then,
\[
    a_d(T)\leq 1.
\]
\end{proposition}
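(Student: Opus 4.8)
The plan is to bound $a_d(T)$ by exhibiting, for an \emph{arbitrary} injective map $p:V\to\Rea^d$, an explicit nonzero test vector $x\in\Rea^{d|V|}$ orthogonal to the kernel of $L(T,p)$ whose Rayleigh quotient $x^TL(T,p)x/\|x\|^2$ is at most $1$; by the variational characterization of eigenvalues and Lemma~\ref{lemma:a_d_equivalent}, this forces $\lambda_{\binom{d+1}{2}+1}(L(T,p))\le 1$ and hence $a_d(T)\le 1$. The structural input I expect to use is minimality: since $T$ is minimally $d$-rigid with $T\neq K_2,K_3$, it has a vertex $w$ of degree exactly $d$ (every minimally $d$-rigid graph has minimum degree between $d$ and $2d-1$, and some low-degree vertex must exist; for a $d$-degree vertex the count $|E|=d|V|-\binom{d+1}{2}$ together with the degree bounds guarantees such a vertex whenever the graph is large enough). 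Let $N(w)=\{v_1,\dots,v_d\}$ be its neighbors.

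The key idea is to build the test vector supported essentially at $w$. First I would note that since $T$ is minimally $d$-rigid, deleting any edge destroys rigidity, so the $d$ edge-vectors $d_{wv_1},\dots,d_{wv_d}$ incident to $w$ are linearly independent in $\Rea^d$ (otherwise $(T,p)$ could not be infinitesimally rigid near $w$) — in fact they form a basis of $\Rea^d$. This lets me choose a unit vector $z\in\Rea^d$ and define $x$ by $x(w)=z$ and $x(u)=0$ for all $u\neq w$. Then by Lemma~\ref{lemma:quadratic_form},
\[
x^TL(T,p)x=\sum_{\{w,v_i\}\in E}\langle z,d_{wv_i}\rangle^2=\sum_{i=1}^d\langle z,d_{wv_i}\rangle^2,
\]
since only edges incident to $w$ contribute. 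Because $\|x\|^2=\|z\|^2=1$, the Rayleigh quotient equals $\sum_{i=1}^d\langle z,d_{wv_i}\rangle^2$. The remaining task is to choose $z$ (equivalently $p$, since we get to optimize over all injective $p$ and the bound must hold for the supremum) so that this sum is controlled, and to handle orthogonality to the kernel.

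The heart of the argument is therefore the linear-algebraic estimate $\sum_{i=1}^d\langle z,d_{wv_i}\rangle^2\le 1$ for a suitable unit vector $z$. I would argue: let $D$ be the $d\times d$ matrix with columns $d_{wv_i}$; then $\sum_i\langle z,d_{wv_i}\rangle^2=z^T(DD^T)z$, so I want a unit $z$ with $z^T(DD^T)z\le 1$, i.e. I want the smallest eigenvalue of $DD^T=\sum_i d_{wv_i}d_{wv_i}^T$ to be at most $1$. Since each $d_{wv_i}$ is a unit vector, $\mathrm{tr}(DD^T)=\sum_i\|d_{wv_i}\|^2=d$, so the average eigenvalue of $DD^T$ is $1$, whence its smallest eigenvalue is $\le 1$, and choosing $z$ to be the corresponding unit eigenvector gives exactly $z^T(DD^T)z=\lambda_{\min}(DD^T)\le 1$. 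This is where I expect the main subtlety: the Rayleigh-quotient bound only controls $\lambda_{\binom{d+1}{2}+1}$ correctly if $x$ is orthogonal to the kernel of $L(T,p)$ (the trivial infinitesimal motions), which a naive $x$ supported at $w$ need not be. The clean fix is to replace $x$ by its projection $q$ off the kernel: since the kernel consists of trivial motions of dimension $\binom{d+1}{2}$, projecting can only decrease the Rayleigh quotient when the numerator is computed via $L(T,p)$ (as $L(T,p)x=L(T,p)q$), exactly as in the proof of Theorem~\ref{thm:upper_bound}; I must only verify $q\neq 0$, which holds because a localized single-vertex displacement $x$ cannot be a global trivial motion (those are spanned by translations and infinitesimal rotations, all of which are ``spread out''), so $x$ is not itself in the kernel. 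Combining, $\lambda_{\binom{d+1}{2}+1}(L(T,p))\le q^TL(T,p)q/\|q\|^2\le x^TL(T,p)x/\|x\|^2\le 1$, and taking the supremum over $p$ yields $a_d(T)\le 1$; the equality case $T=S_{n,d}$ is then a separate direct computation handled in Proposition~\ref{prop:lower_bound_star_graph}.
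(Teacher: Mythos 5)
Your proposal takes a different route from the paper, but it has two genuine gaps. The first is the structural input: a minimally $d$-rigid graph need not contain a vertex of degree exactly $d$. The edge count $|E|=d|V|-\binom{d+1}{2}$ only forces the minimum degree to lie between $d$ and $2d-1$; for instance $K_{3,3}$ is a $3$-regular minimally $2$-rigid graph with no vertex of degree $2$. With a vertex $w$ of degree up to $2d-1$, your trace argument only yields $\lambda_{1}\bigl(\sum_i d_{wv_i}d_{wv_i}^T\bigr)\le (2d-1)/d$, so the method gives a bound close to $2$ rather than $1$.

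The second, more serious, gap is that the projection step runs in the wrong direction. If $q$ is the projection of $x$ off the kernel of $L(T,p)$, then indeed $q^TL(T,p)q=x^TL(T,p)x$, but $\|q\|^2\le\|x\|^2$, so $q^TL(T,p)q/\|q\|^2\ge x^TL(T,p)x/\|x\|^2$: projecting can only \emph{increase} the Rayleigh quotient, and your chain of inequalities breaks at its middle link. To repair it you would need a lower bound on $\|q\|$, i.e.\ an upper bound on the overlap of the localized displacement $x$ with the trivial motions, and no such bound holds uniformly over injective $p$ (which you must handle, since $a_d$ is a supremum over $p$ --- you do not get to choose $p$): if $p(w)$ is placed very far from the remaining points, a displacement supported at $w$ and orthogonal to the direction toward them is nearly an infinitesimal rotation, so $\|q\|$ can be made arbitrarily small and $q^TL(T,p)q/\|q\|^2$ arbitrarily large. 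The paper sidesteps all of this with a global argument: by the average-degree count there is a vertex $v$ of degree at least $d+1$, whose removal leaves too few edges for $d$-rigidity, so $a_d(T\setminus v)=0$ and Lemma~\ref{lemma:vertex_removal} gives $a_d(T)\le a_d(T\setminus v)+1=1$, with the base case $T=K_{d+1}$ ($d\ge 3$) covered by Theorem~\ref{thm:simplex}.
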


For the proof of Proposition \ref{prop:upper_bound_minimmaly_rigid} we will need the following results.

\begin{lemma}[Jord\'an-Tanigawa {\cite[Lemma 4.5]{jordan2022rigidity}}]\label{lemma:vertex_removal}
Let $d\geq 1$. Let $G=(V,E)$ and $v\in V$. Then,
\[
    a_d(G\setminus v)\geq a_d(G)-1.
\]
\end{lemma}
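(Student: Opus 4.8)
The plan is to fix an arbitrary map $p\colon V\to\Rea^d$, restrict it to $V\setminus\{v\}$, and compare the stiffness matrix of the restricted framework with the principal submatrix of $L(G,p)$ obtained by deleting the $d$ coordinates belonging to $v$. Write $p'=p|_{V\setminus\{v\}}$ and let $B$ denote the principal submatrix of $L(G,p)$ indexed by the coordinate blocks of $V\setminus\{v\}$. The first step is to establish the decomposition
\[
B = L(G\setminus v,\,p') + D_v,
\]
where $D_v$ is the block-diagonal, positive semidefinite matrix whose $u$-block equals $d_{uv}d_{uv}^T$ for each neighbor $u$ of $v$, and is zero on all other blocks. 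This follows directly from $L(G,p)=\sum_{\{u,w\}\in E}\bv_{u,w}\bv_{u,w}^T$: each edge of $G\setminus v$ keeps its full contribution after restriction, and these sum to $L(G\setminus v,p')$ since the directions $d_{uw}$ depend only on $p(u),p(w)$; meanwhile each edge $\{u,v\}$ incident to $v$ contributes, after deleting the $v$-block, only its surviving $u$-block $d_{uv}d_{uv}^T$. Because every $d_{uv}$ is either a unit vector or zero, each block of $D_v$ has operator norm at most $1$, and as the nonzero blocks occupy distinct coordinate groups one gets $0\preceq D_v\preceq I$.

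With this in hand, the argument combines two standard spectral inequalities in complementary directions. First, Cauchy interlacing for principal submatrices: deleting the $d$ rows and columns of $v$ yields $\lambda_k(B)\ge\lambda_k(L(G,p))$ for every admissible index $k$, and in particular
\[
\lambda_{\binom{d+1}{2}+1}(B)\ \ge\ \lambda_{\binom{d+1}{2}+1}(L(G,p)).
\]
Second, Weyl's inequality applied to $L(G\setminus v,p')=B-D_v$, using $\lambda_{\max}(D_v)\le 1$, gives
\[
\lambda_{\binom{d+1}{2}+1}\bigl(L(G\setminus v,p')\bigr)\ \ge\ \lambda_{\binom{d+1}{2}+1}(B)-1.
\]
Chaining these bounds produces $\lambda_{\binom{d+1}{2}+1}(L(G\setminus v,p'))\ge \lambda_{\binom{d+1}{2}+1}(L(G,p))-1$. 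Since $p'$ is an admissible map for $G\setminus v$, the left-hand side is at most $a_d(G\setminus v)$, so this inequality holds with $a_d(G\setminus v)$ on the left for every choice of $p$. Taking the supremum over all $p$ on the right then yields $a_d(G\setminus v)\ge a_d(G)-1$.

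I expect the main obstacle to be bookkeeping the directions of the two eigenvalue inequalities correctly: interlacing must be oriented so that passing to the principal submatrix $B$ can only raise the relevant eigenvalue, while Weyl must be oriented to bound the loss from subtracting the perturbation $D_v$, and it is precisely the bound $D_v\preceq I$ that makes this loss exactly the ``$-1$'' of the statement. A secondary point is the handling of degenerate cases: if $a_d(G)=0$ the claim is immediate from $a_d\ge 0$, so one may assume $G$ is $d$-rigid; for $d\ge 2$ this guarantees $|V|\ge d+1$, hence $L(G\setminus v,p')$ has size at least $d^2\ge\binom{d+1}{2}+1$ and the index is meaningful, while the only remaining small case (a single leftover vertex, possible when $d=1$) is absorbed by the convention $a(\cdot)=\infty$ for a one-vertex graph.
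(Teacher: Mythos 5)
Your proof is correct. Note that the paper itself gives no proof of this lemma---it is quoted directly from Jord\'an and Tanigawa---so there is nothing internal to compare against; your argument (restrict $p$ to $V\setminus\{v\}$, write the principal submatrix as $L(G\setminus v,p')+D_v$ with $0\preceq D_v\preceq I$, then chain Cauchy interlacing with Weyl's inequality) is a valid, self-contained derivation, and it is in essence the same computation as the original one, which applies Courant--Fischer to zero-extensions of test vectors and absorbs the extra term $\sum_{u\sim v}\langle x(u),d_{uv}\rangle^2\le\|x\|^2$ --- exactly your bound $D_v\preceq I$ in quadratic-form language.
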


\begin{theorem}[{\cite[Theorem 1.2]{lew2022d}}]\label{thm:simplex}
Let $d\geq 3$. Then
\[
    a_d(K_{d+1})=1.
\]
\end{theorem}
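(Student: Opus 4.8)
The plan is to use the lower/upper stiffness correspondence to recast $a_d(K_{d+1})$ as an extremal smallest‑eigenvalue problem, then prove the matching lower and upper bounds separately, isolating the upper bound as the main difficulty. First I would observe that $K_{d+1}$ is isostatic: it has exactly $\binom{d+1}{2}=d(d+1)-\binom{d+1}{2}$ edges, so for any injective $p$ whose image spans $\Rea^d$ the rigidity matrix $R(K_{d+1},p)\in\Rea^{d(d+1)\times\binom{d+1}{2}}$ has full column rank and $L^-(K_{d+1},p)=R^TR$ is a nonsingular $\binom{d+1}{2}\times\binom{d+1}{2}$ matrix. Hence by \eqref{eqn:lam_k}, $\lambda_{\binom{d+1}{2}+1}(L(K_{d+1},p))=\lambda_1(L^-(K_{d+1},p))$, and by Lemma \ref{lemma:a_d_equivalent},
\[
a_d(K_{d+1})=\sup_{p}\lambda_1\!\big(L^-(K_{d+1},p)\big),
\]
the supremum taken over injective, full‑dimensional $p$.

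For the lower bound I would plug in the regular simplex with unit edge lengths. As in Claim \ref{clm:Lminus}, any two edges sharing a vertex then meet at angle $\pi/3$, so $\cos\theta=\tfrac12$, and Lemma \ref{lemma:down_laplacian} gives $L^-=2I+\tfrac12 A$, where $A$ is the adjacency matrix of the line graph of $K_{d+1}$, i.e.\ the triangular (Johnson) graph $T(d+1)$. This graph is strongly regular with eigenvalues $2(d-1)$, $d-3$, $-2$ of multiplicities $1$, $d$, $\binom{d+1}{2}-d-1$, so $L^-$ has eigenvalues $d+1$, $\tfrac{d+1}{2}$, $1$. The eigenvalue $1$ occurs with positive multiplicity precisely when $d\ge 3$, in which case it is the smallest; thus $a_d(K_{d+1})\ge\lambda_1(L^-)=1$. (For $d=2$ the smallest eigenvalue is instead $\tfrac32=a_2(K_3)$, which is exactly why the hypothesis $d\ge3$ is needed.)

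The upper bound $\lambda_1(L^-(K_{d+1},p))\le 1$ for \emph{every} admissible $p$ is the crux. Here I would exploit that, since the $d+1$ points are affinely independent, every velocity field on $K_{d+1}$ is affine; modding out the trivial (isometric) motions, the nontrivial motion space is parametrized by symmetric matrices $S\in\Rea^{d\times d}$ through $x_S(i)=Sp(i)$, and Lemma \ref{lemma:quadratic_form} turns the energy into
\[
x_S^{T}L(K_{d+1},p)\,x_S=\sum_{u<v}\frac{\big(e_{uv}^{T}Se_{uv}\big)^2}{\|e_{uv}\|^2},\qquad e_{uv}=p(u)-p(v).
\]
Writing $\hat x_S$ for the component of $x_S$ orthogonal to the trivial motions, this recasts the target as $\min_{S\ne0} x_S^{T}Lx_S/\|\hat x_S\|^2\le 1$, and it suffices to produce a single symmetric $S$ — for instance by averaging the Rayleigh quotient over a suitable ensemble of trace‑free $S$, using that $S=I$ already realizes the largest eigenvalue $d+1$ — for which the quotient is at most $1$.

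I expect the main obstacle to be controlling the denominator $\|\hat x_S\|^2$, equivalently the projection onto the rotation subspace, uniformly over all configurations, since the pairwise angles, and hence the entire quadratic form, vary with $p$. To overcome this I would try to reduce the general case to the regular simplex by a symmetrization or convexity argument showing that $\lambda_1(L^-(p))$ is maximized at the most symmetric configuration, and would fall back on an explicit near‑extremal choice of $S$ (read off from the $M$‑eigenbasis of the inertia matrix $M=\sum_i p(i)p(i)^T$) should such a reduction prove elusive.
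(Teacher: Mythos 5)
First, a point of reference: this paper never proves Theorem~\ref{thm:simplex} --- it is quoted from \cite[Theorem 1.2]{lew2022d} and used as a black box (in the proof of Proposition~\ref{prop:upper_bound_minimmaly_rigid}) --- so there is no internal proof to compare against, and your attempt must stand on its own. Its first half does: the reduction $a_d(K_{d+1})=\sup_p\lambda_1(L^-(K_{d+1},p))$ via \eqref{eqn:lam_k} and Lemma~\ref{lemma:a_d_equivalent} is valid, and the lower bound is complete and correct. For the unit regular simplex, Lemma~\ref{lemma:down_laplacian} gives $L^-=2I+\tfrac12 A$ with $A$ the adjacency matrix of the triangular graph $T(d+1)$, whose spectrum $2(d-1)$, $d-3$, $-2$ (with multiplicities $1$, $d$, $\binom{d+1}{2}-d-1$) yields $\lambda_1(L^-)=1$ precisely when $d\ge 3$; your parenthetical explaining why the hypothesis $d\ge 3$ enters (for $d=2$ the eigenvalue $-2$ has multiplicity zero and the minimum becomes $3/2$) is also correct.

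The genuine gap is the upper bound $\lambda_{\binom{d+1}{2}+1}(L(K_{d+1},p))\le 1$ for \emph{every} injective $p$, which you correctly identify as the crux but do not prove. What you have is a correct reformulation --- every velocity field of an affinely independent point set is affine, nontrivial motions are parametrized by nonzero symmetric matrices $S$, and the target is $\min_{S\ne 0} x_S^TLx_S/\|\hat x_S\|^2\le 1$ --- followed by three unexecuted, explicitly hedged strategies, none of which is a proof and none of which obviously closes. (i) Averaging the Rayleigh quotient over an ensemble of trace-free $S$ requires a configuration-independent lower bound on the expected denominator, and the rotational component of $x_S$, governed by $\mathrm{tr}(SWM)$ with $M=\sum_i p(i)p(i)^T$ and $W$ skew, varies with $p$ --- exactly the obstacle you name without resolving. (ii) ``Reduce to the regular simplex by symmetrization or convexity'' is not a reduction but a restatement of the theorem (that the regular simplex maximizes $\lambda_1(L^-)$), and no mechanism for such monotonicity of stiffness eigenvalues is offered. (iii) Taking $S$ diagonal in the eigenbasis of $M$ is a genuinely good observation --- for centered $p$ it makes $x_S$ orthogonal to all trivial motions, so $\|\hat x_S\|^2=\mathrm{tr}(S^2M)$ --- but it restricts the minimization to a $d$-dimensional subspace of symmetric matrices, and you give no argument (nor is it clear) that the minimum over this restricted subspace is still at most $1$ for every configuration; that inequality is the entire difficulty, and it is where essentially all the work in \cite{lew2022d} lies. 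As it stands, your proposal establishes only $a_d(K_{d+1})\ge 1$.
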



\begin{proof}[Proof of Proposition \ref{prop:upper_bound_minimmaly_rigid}]
Denote $T=(V,E)$. 
First, if $|V|=d+1$ then $T=K_{d+1}$ and $d\geq 3$ since we assume that $T\neq K_2, K_3$. Therefore, $a_d(T)=1$ by Theorem \ref{thm:simplex}.

Otherwise, we claim that there is a vertex $v$ in $T$ such that $T\setminus v$ is not $d$-rigid. Indeed, $|E|=d|V|-\binom{d+1}{2}$, so the average degree in $T$ is
\[
    \frac{2|E|}{|V|}= 2d- \frac{d(d+1)}{|V|} >d.
\]
Let $v\in V $ be a vertex of degree at least $d+1$ in $T$. Then, 
\[
|E(T\setminus v)| < |E| - d = d|V(T\setminus v)| - \binom{d+1}{2},
\]
hence $T\setminus v$ is not $d$-rigid. In particular, $a_d(T\setminus v)=0$ and by Lemma \ref{lemma:vertex_removal},
\[
    a_d(T) \le a_d(T\setminus v)+1 =1,
\]
as claimed.
\end{proof}

Let $d\geq 1$ and $n\geq d+1$. Let $S_{n,d}$ be the graph on vertex set $[n]$ with edge set
\[
    E(S_{n,d})=\left\{ \{i,j\}:\, i\in[d],\,  j\in[n]\setminus\{i\}\right\}.
\]
It is easy to check that $S_{n,d}$ is minimally $d$-rigid. 

We consider the following mapping of the vertices of $S_{n,d}$ to $\Rea^d$:
Let $e_1,\ldots,e_d\in \Rea^d$ be the standard basis vectors. We define $\px:[n]\to \Rea^d$ by
\[
    \px(i)=\begin{cases}
        e_i & \text{ if } 1\leq i\leq d,\\
        0 & \text{ if } d<i\leq n.
    \end{cases}
\]

\begin{proposition}\label{claim:star_spectrum}
The spectrum of $L(S_{n,d},\px)$ is
\[
\left\{ 0^{\left(\binom{d+1}{2}\right)}, 1^{\left(dn-\binom{d+1}{2}-d\right)}, (n-d/2)^{(d-1)}, n^{(1)} \right\}
\]
(where the superscript $(m)$ indicates multiplicity $m$ of the corresponding eigenvalue).
In particular, $\lambda_{{\binom{d+1}{2}+1}}(L(S_{n,d},\px))= 1$.
\end{proposition}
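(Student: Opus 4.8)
The plan is to diagonalize $L:=L(S_{n,d},\px)$ directly, exploiting the rich symmetry of the framework $(S_{n,d},\px)$ to peel off explicit eigenspaces until only a small, fully symmetric piece remains. First I would record the three edge types: for an edge $\{i,j\}$ with $i\in[d]$ and $j>d$ one has $d_{ij}=e_i$, while for an edge $\{i,i'\}$ with $i,i'\in[d]$ one has $d_{ii'}=(e_i-e_{i'})/\sqrt2$. Viewing a displacement as a map $x:[n]\to\Rea^d$ and using the coordinate-wise action $(Lx)(w)=\sum_{v\sim w}\langle x(w)-x(v),d_{wv}\rangle\, d_{wv}$ (immediate from the definition of $L$ and Lemma~\ref{lemma:quadratic_form}), I would obtain closed formulas for $(Lx)$ at an ``outer'' vertex $j>d$ and at an ``inner'' vertex $i\le d$, carefully separating, at an inner vertex, the diagonal coordinate $e_i$ from the off-diagonal coordinates $e_\ell$, $\ell\neq i$. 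Here I write $a_{i,\ell}=\langle x(i),e_\ell\rangle$ for the displacement coordinates of an inner vertex.

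Two symmetries then organize the spectrum: the group $S_{n-d}$ permuting the outer vertices (which all map to $0$), and the group $S_d$ acting simultaneously on the $d$ inner vertices and on the $d$ coordinates. I would first exhibit the ``easy'' eigenspaces. The translations, together with the infinitesimal rotations realized as the \emph{antisymmetric} off-diagonal inner motions $a_{i,\ell}=-a_{\ell,i}$, lie in $\ker L$, accounting for eigenvalue $0$ with total multiplicity $d+\binom d2=\binom{d+1}2$. Next, for each coordinate $\ell$, any motion of the outer vertices alone that is balanced in that coordinate (i.e.\ $\sum_{j>d}\langle x(j),e_\ell\rangle=0$, with all inner displacements zero) satisfies $Lx=x$; this yields eigenvalue $1$ with multiplicity $d(n-d-1)$.

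What remains is the orthogonal complement of these spaces, an $L$-invariant ``coupled'' subspace of dimension $\tfrac{d(d+3)}2$ spanned by the inner diagonal displacements $a_{i,i}$, the \emph{symmetric} off-diagonal inner displacements $a_{i,\ell}=a_{\ell,i}$, and the per-coordinate averages of the outer displacements. I would decompose it under $S_d$. The trivial isotypic component is $3$-dimensional, and in the basis (diagonal value, symmetric off-diagonal value, outer average) $L$ acts by an explicit $3\times3$ matrix whose eigenvalues are $0,1,n$. The standard-representation component has multiplicity $d-1$, and by Schur's lemma $L$ again acts through a single $3\times3$ matrix, whose eigenvalues I expect to be $0,1,n-\tfrac d2$. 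Finally, the symmetric off-diagonal motions carry the permutation representation of $S_d$ on $2$-element subsets; its residual constituent (dimension $\tfrac{d(d-3)}2$, present only for $d\ge4$) does not couple to the diagonal or outer modes, and $L$ fixes it, contributing eigenvalue $1$. Summing multiplicities — the coupled subspace gives $1+(d-1)+\tfrac{d(d-3)}2=\binom d2$ further $1$'s on top of the $d(n-d-1)$ already found, exactly one eigenvalue $n$, and $d-1$ copies of $n-\tfrac d2$ — reproduces the claimed spectrum, and a dimension count ($\binom{d+1}2+(dn-\binom{d+1}2-d)+(d-1)+1=dn$) confirms completeness.

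The main obstacle I anticipate is the bookkeeping in the coupled subspace: deriving the two $3\times3$ matrices correctly, since the diagonal and off-diagonal coordinates of $(Lx)(i)$ genuinely mix the diagonal, symmetric off-diagonal, and outer parameters. The other delicate point is isolating the $2$-subset representation of $S_d$ and verifying that its residual constituent is annihilated by the $S_d$-equivariant map $\gamma\mapsto\bigl(\sum_{i'\neq i}\gamma_{i,i'}\bigr)_i$ into $\Rea^{[d]}$ (which, carrying only trivial and standard constituents, must kill the residual one by Schur); this is exactly what keeps the residual component inside the eigenvalue-$1$ space. A final point to state, rather than an obstacle, is that the natural parameter bases above are not orthonormal, so the reduced $3\times3$ matrices are not symmetric; this is harmless because the eigenvalues of $L$ restricted to an invariant subspace are intrinsic, but it should be noted to justify reading the eigenvalues directly off these matrices.
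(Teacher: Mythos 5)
Your proposal is correct, and it takes a genuinely different route from the paper. The paper passes to the lower stiffness matrix $L^{-}(S_{n,d},\px)$ on the edge space, writes down its entries explicitly via Lemma~\ref{lemma:down_laplacian}, and then guesses and verifies an explicit eigenbasis ($\psi$ for eigenvalue $n$; vectors $\varphi_A$ resp.\ $\phi_{k,B}$ indexed by balanced subsets of $[d]$ for eigenvalue $n-d/2$; and two families $\Phi_{i,j,k}$, $\Psi_{i,j}$ for eigenvalue $1$), with the multiplicity counts reduced to elementary linear-independence arguments. You instead work with $L$ on the displacement space and block-diagonalize it using the $S_d\times S_{n-d}$ symmetry; I checked that your two reduced $3\times 3$ matrices have characteristic polynomials $\lambda(\lambda-1)(\lambda-n)$ and $\lambda(\lambda-1)(\lambda-(n-\tfrac d2))$ respectively, and that the residual constituent of the $2$-subset representation is indeed fixed by $L$, so all multiplicities come out right. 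Your approach is more conceptual --- it explains where $n-d/2$ comes from (the standard representation) rather than producing its eigenvectors by inspection --- at the cost of the representation-theoretic bookkeeping; the paper's is longer but entirely elementary. Two small points to fix in a write-up: the ``coupled'' subspace of dimension $\tfrac{d(d+3)}2$ is not the orthogonal complement of the kernel and the balanced-outer eigenspace (that complement has dimension $\binom{d+1}2$); it is the orthogonal complement of the antisymmetric inner motions and the balanced outer motions, and it contains only the \emph{projections} of the translations (a translation also has a nonzero antisymmetric off-diagonal component), which is why each $3\times3$ block carries one zero eigenvalue --- your final tally already accounts for this correctly, but the phrasing should match. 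Also the degenerate cases $d\le 3$ (no residual constituent, and for $d=1$ no off-diagonal coordinates at all) should be noted explicitly.
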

\begin{proof}
It will be more convenient to work with the lower stiffness matrix $L^{-}(S_{n,d},\px)$. Let $E=E(S_{n,d})$. Since $|E|=d n-\binom{d+1}{2}$, and $L(S_{n,d},\px)$ and $L^{-}(S_{n,d},\px)$ have the same non-zero eigenvalues, the claim is equivalent to showing that the spectrum of $L^{-}(S_{n,d},\px)$ is
\[
\left\{ 1^{\left(dn-\binom{d+1}{2}-d\right)}, (n-d/2)^{(d-1)}, n^{(1)} \right\}
\]
Denote $L^{-}=L^{-}(S_{n,d},\px)$. Let $i,j,k\in [n]$ be distinct. Let $e=\{i,j\}$ and $e'=\{i,k\}$. By Lemma \ref{lemma:down_laplacian}, we have
\[
L^{-}_{e,e'}=
\begin{cases}
 \frac{1}{2} & \text{ if } i,j,k\in [d],\\
 \frac{\sqrt{2}}{2} & \text{ if } i,j\in[d], k\in[n]\setminus[d],\\
1 & \text{ if } i\in[d], j,k\notin[d],\\
0 & \text{ otherwise,}
\end{cases}
\]
and
\[
    L^{-}_{e,e}=2.
\]

From this description of $L^{-}$, the following characterization of the eigenvectors of $L^{-}$ is immediate:
\begin{claim}\label{claim:star_eigenvector_condition}
Let $x\in\Rea^E$. Then $x$ is an eigenvector of $L^{-}$ with eigenvalue $\lambda$ if and only if it satisfies the following conditions:
\begin{enumerate}
    \item For all $1\leq i<j\leq d$, 
    \begin{multline*}
    (\lambda-2)x(\{i,j\}) = \frac{1}{2}\sum_{k\in[d]\setminus\{i,j\}}(x(\{i,k\})+x(\{j,k\})) \\+\frac{\sqrt{2}}{2}\sum_{k\in[n]\setminus[d]}(x(\{i,k\})+x(\{j,k\})),
    \end{multline*}
    \item For all $i\in[d]$, $j\in[n]\setminus[d]$,
    \[
    (\lambda-2)x(\{i,j\}) = \sum_{k\in[n]\setminus([d]\cup\{j\})} x(\{i,k\}) +\frac{\sqrt{2}}{2}\sum_{k\in[d]\setminus\{i\}} x(\{i,k\}).
   \]
\end{enumerate}
\end{claim}

We will find a basis of $\Rea^E$ consisting of eigenvectors of $L^{-}$.

\textbf{Eigenvalue $\bm{n}$:}
Let $\psi\in\Rea^E$ be defined by
\[
    \psi(e)=\begin{cases}
     \sqrt{2} & \text{ if } i,j\in[d],\\
     1 & \text{ if } i\in[d], j\in[n]\setminus[d]
    \end{cases}
\]
for every $e=\{i,j\}\in E$ such that $i<j$. It is easy to check using Claim \ref{claim:star_eigenvector_condition} that $\psi$ is an eigenvector of $L^{-}$ with eigenvalue $n$.

\textbf{Eigenvalue $\bm{n-d/2}$:} 
We divide into two cases, depending on the parity of $d$:

First, assume that $d$ is even. Let $A$ be a subset of $[d]$ of size $d/2$, and let $\bar{A}$ be its complement in $[d]$. Define $\varphi_A\in \Rea^E$ as
\[
    \varphi_A(e)=\begin{cases}
                    \sqrt{2} & \text{ if } e\subset A,\\
                    -\sqrt{2} & \text{ if } e\subset \bar{A},\\
                    1 & \text{ if } e=\{i,j\} \text{ where } i\in A \text{ and } j\in[n]\setminus[d],\\
                    -1 & \text{ if } e=\{i,j\} \text{ where } i\in \bar{A} \text{ and } j\in[n]\setminus[d],\\
                    0 &\text{ otherwise.}
    \end{cases}
\]
It is not hard to check using Claim \ref{claim:star_eigenvector_condition} that $\varphi_A$ is an eigenvector of $L^-$ with eigenvalue $n-d/2$.

We will show that the span of the set $\{\varphi_A:\, A\subset[d],\, |A|=d/2\}$ has dimension at least $d-1$.

For $A\subset [d]$, $|A|=d/2$, let $\tilde{\varphi}_A$ be the restriction of $\varphi_A$ on the edges of the form $\{i,d+1\}$, for $i\in[d]$. It is enough to show that the dimension of the span of $X=\{\tilde{\varphi}_A:\, A\subset[d],\, |A|=d/2\}$ is at least $d-1$. 

Note that $X$ consists exactly of all the vectors in $\Rea^d$ with half of their coordinates equal to $1$ and half of them equal to $-1$.  We will show that $X$ spans all the vectors in $\Rea^d$ whose sum of coordinates is $0$. Indeed, let $e_1,\ldots,e_d$ be the standard basis vectors of $\Rea^d$. Then, since the vectors $e_i-e_j$, $i<j\in[d]$, span the subspace of vectors with sum $0$, it is enough to show that we can write $e_i-e_j$ as a linear combination of vectors in $X$. Indeed, we can choose $u$ to be any vector in $X$ whose $i$-th coordinate is $1$ and whose $j$-th coordinate is $-1$. Let $v$ be obtained by flipping the $i$-th and $j$-th coordinate of $u$. It is clear that $v$ also belongs to $X$. We can write
\[
e_i-e_j=\frac{1}{2}u-\frac{1}{2}v.
\]  
Therefore, the span of $X$ consists of all zero-sum vectors in $\Rea^d$. Hence, its dimension is $d-1$.

So the dimension of $\text{span}(\{\varphi_A:\, A\subset[d],\, |A|=d/2\})$ is also at least $d-1$, meaning there is a set of $d-1$ linearly independent eigenvectors of $L^-$ with eigenvalue $n-d/2$.

Now, assume that $d$ is odd. For $d=1$ there is nothing to prove, so assume $d\ge 3$. Let $k\in[d]$, let $B$ be a subset of $[d]\setminus\{k\}$ of size $(d-1)/2$, and let $\bar{B}$ be its complement in $[d]\setminus\{k\}$. We define $\phi_{k,B}\in \Rea^E$ by
\[
\phi_{k,B}(e)=\begin{cases}
            \sqrt{2} & \text { if } e\subset B,\\
            -\sqrt{2} & \text{ if } e\subset \bar{B},\\
     \frac{\sqrt{2}}{2} & \text{ if } e=\{i,k\} \text{ for } i\in B,\\
 -\frac{\sqrt{2}}{2} & \text{ if } e=\{i,k\} \text{ for } i\in\bar{B},\\
 1 & \text{ if } e=\{i,j\} \text{ for } i\in B,\, j\in [n]\setminus[d],\\
             -1 & \text{ if } e=\{i,j\} \text{ for } i\in \bar{B},\, j\in [n]\setminus[d],\\
           0 & \text{ otherwise.}  
\end{cases}
\]
It is not hard to check using Claim \ref{claim:star_eigenvector_condition} that $\phi_{k,B}$ is an eigenvector of $L^-$ with eigenvalue $n-d/2$.

We will show that the span of the set $\{\phi_{k,B}:\, k\in[d],\,  B\subset[d]\setminus\{k\},\, |B|=(d-1)/2\}$ has dimension at least $d-1$.

For $k\in [d]$ and $B\subset [d]\setminus\{k\}$ with  $|B|=(d-1)/2$, let $\tilde{\phi}_{k,B}$ be the restriction of $\phi_{k,B}$ on the edges of the form $\{i,d+1\}$, for $i\in[d]$. It is enough to show that the dimension of the span of $Y=\{\tilde{\phi}_{k,B}:\, k\in [d], \, B\subset [d]\setminus\{k\}, \,  |B|=(d-1)/2\}$ is at least $d-1$. 

Note that $Y$ consists exactly of all the vectors in $\Rea^d$ with one coordinate equal to $0$, $(d-1)/2$ coordinates equal to $1$ and $(d-1)/2$ coordinates equal to $-1$.
 We will show that $Y$ spans all the vectors in $\Rea^d$ whose sum of coordinates is $0$. This follows from essentially the same argument we used for the case of even $d$:
 
 Let $e_1,\ldots,e_d$ be the standard basis vectors of $\Rea^d$. Then, since the vectors $e_i-e_j$, $i<j\in[d]$, span the subspace of vectors with sum $0$, it is enough to show that we can write $e_i-e_j$ as a linear combination of vectors in $Y$. Indeed, we can choose $u$ to be any vector in $Y$ whose $i$-th coordinate is $1$ and whose $j$-th coordinate is $-1$ (here we use that $d\geq 3$). Let $v$ be obtained by flipping the $i$-th and $j$-th coordinate of $u$. It is clear that $v$ also belongs to $Y$. We can write
\[
e_i-e_j=\frac{1}{2}u-\frac{1}{2}v.
\]  
Therefore, the span of $Y$ consists of all zero-sum vectors in $\Rea^d$. Hence, its dimension is $d-1$. Therefore, there is a set of $d-1$ linearly independent eigenvectors of $L^-$ with eigenvalue $n-d/2$.

\textbf{Eigenvalue $\bm{1}$:} Let $i\in[d]$ and $j,k\in[n]\setminus[d]$. Define $\Phi_{i,j,k}\in \Rea^E$ as
\[
    \Phi_{i,j,k}(e)=\begin{cases}
                        1 & \text{ if } e=\{i,j\},\\
                        -1 & \text{ if } e=\{i,k\},\\
                        0 & \text{ otherwise.}
    \end{cases}
\]
It is easy to check using Claim \ref{claim:star_eigenvector_condition} that $\Phi_{i,j,k}$ is an eigenvector of $L^{-}$ with eigenvalue $1$.

Let $I=\{\Phi_{i,d+1,k}:\, i\in[d], k\in[n]\setminus[d+1]\}$. Since each vector $\Phi_{i,d+1,k}$ in $I$ has an edge that is unique to its support (the edge $\{i,k\}$), $I$ is linearly independent. Note that $|I|=d(n-d-1)$ .

Now, let $i,j\in[d]$. Define $\Psi_{i,j}\in \Rea^E$ as 
\[
    \Psi_{i,j}(e)=\begin{cases}
                    \sqrt{2}(n-d) & \text{ if } e=\{i,j\},\\
                    -1 & \text{ if } e=\{i,k\} \text{ or } e=\{j,k\} \text{ for } k\in[n]\setminus [d],\\
                    0 & \text{ otherwise.}
    \end{cases}
\]
It is not hard to check using Claim \ref{claim:star_eigenvector_condition} that $\Psi_{i,j}$ is an eigenvector of $L^{-}$ with eigenvalue $1$. 
Let
\[
    J=\{ \Psi_{i,j} :\,  i,j\in[d], i<j \}.
\]
We have $|J|=\binom{d}{2}$. Note that $J$ is a linearly independent set, since each vector $\Psi_{i,j}$ in $J$ has an edge in its support that is unique to it (the edge $\{i,j\}$). Moreover, its easy to check that the vectors in $I$ are orthogonal to the vectors in $J$. Therefore, $I\cup J$ is a set of 
\[
    d(n-d-1)+\binom{d}{2}= dn-\binom{d+1}{2}-d
\]
linearly independent eigenvectors of $L^-$ with eigenvalue $1$.
\end{proof}

\begin{proposition}\label{prop:lower_bound_star_graph}
Let $d\geq 1$ and $n\geq d+1$. Then, unless $d=2$ and $n=3$, we have
\[
    a_d(S_{n,d})=1.
\]
\end{proposition}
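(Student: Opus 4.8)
The plan is to prove that $a_d(S_{n,d})=1$ by combining the upper bound from Proposition~\ref{prop:upper_bound_minimmaly_rigid} with a matching lower bound supplied by the explicit embedding $\px$ analyzed in Proposition~\ref{claim:star_spectrum}. First I would observe that since $S_{n,d}$ is minimally $d$-rigid and (under the stated exception) $S_{n,d}\neq K_2,K_3$, Proposition~\ref{prop:upper_bound_minimmaly_rigid} gives $a_d(S_{n,d})\leq 1$. (The small cases where $S_{n,d}=K_{d+1}$, i.e.\ $n=d+1$, are covered either by Theorem~\ref{thm:simplex} for $d\geq 3$ or by direct verification for $d=1$; note $d=2,n=3$ is exactly the excluded case $K_3$.) It therefore remains to produce a map $p$ witnessing $\lambda_{\binom{d+1}{2}+1}(L(S_{n,d},p))\geq 1$.

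For the lower bound I would invoke Proposition~\ref{claim:star_spectrum} directly: it computes the entire spectrum of $L(S_{n,d},\px)$ for the specific embedding $\px$ sending the clique vertices $1,\dots,d$ to the standard basis vectors $e_1,\dots,e_d$ and all remaining vertices to the origin. The proposition asserts in particular that $\lambda_{\binom{d+1}{2}+1}(L(S_{n,d},\px))=1$, so by the definition of $a_d$ as a supremum over embeddings we immediately get $a_d(S_{n,d})\geq \lambda_{\binom{d+1}{2}+1}(L(S_{n,d},\px))=1$. Combining the two bounds yields $a_d(S_{n,d})=1$.

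The main obstacle is really relegated to Proposition~\ref{claim:star_spectrum}, whose proof must be carried out first. There the work is to exhibit a full orthogonal-type basis of eigenvectors of the lower stiffness matrix $L^-=L^-(S_{n,d},\px)$: an eigenvector $\psi$ with eigenvalue $n$, a family $\{\varphi_A\}$ or $\{\phi_{k,B}\}$ (depending on the parity of $d$) spanning a $(d-1)$-dimensional eigenspace with eigenvalue $n-d/2$, and the families $I=\{\Phi_{i,d+1,k}\}$ and $J=\{\Psi_{i,j}\}$ spanning the eigenvalue-$1$ space of dimension $dn-\binom{d+1}{2}-d$. The delicate points are verifying, via the eigenvector conditions in Claim~\ref{claim:star_eigenvector_condition}, that each proposed vector is genuinely an eigenvector, and confirming the dimension counts add up to $|E|=dn-\binom{d+1}{2}$ so that we have located the complete nonzero spectrum. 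The identification of the $(n-d/2)$-eigenspace dimension reduces to the linear-algebra fact that the zero-sum vectors in $\Rea^d$ are spanned by the $\pm1$ (respectively $\{0,\pm1\}$) patterns, which is the one genuinely nonroutine step.

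Once Proposition~\ref{claim:star_spectrum} is in hand, the proof of Proposition~\ref{prop:lower_bound_star_graph} is immediate, as sketched above; no further computation is needed beyond correctly handling the boundary cases $n=d+1$ and the excluded pair $(d,n)=(2,3)$.
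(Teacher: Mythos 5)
Your proposal is correct and follows exactly the paper's own argument: the upper bound $a_d(S_{n,d})\leq 1$ comes from Proposition~\ref{prop:upper_bound_minimmaly_rigid}, and the matching lower bound comes from the explicit spectrum computation for the embedding $\px$ in Proposition~\ref{claim:star_spectrum}, which gives $\lambda_{\binom{d+1}{2}+1}(L(S_{n,d},\px))=1$. Your outline of the eigenvector-basis verification needed inside Proposition~\ref{claim:star_spectrum} also matches the paper's strategy, so there is nothing further to add.
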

\begin{proof}
By Proposition \ref{prop:upper_bound_minimmaly_rigid}, we have $a_d(S_{n,d})\leq 1$ (note that $S_{n,d}\neq K_3$ unless $d=2$ and $n=3$).
On the other hand, by Proposition \ref{claim:star_spectrum}, $\lambda_{\binom{d+1}{2}+1}(L(S_{n,d},p^*))=1$, so $a_d(S_{n,d})= 1$.
\end{proof}

It would be interesting to determine whether the graphs $S_{n,d}$ are the only extremal cases in Theorem \ref{thm:upper_bound_minimally_rigid} (for $d=1$ this is a result of Merris, \cite[Cor. 2]{merris1987characteristic}).

\subsection{Generalized path graphs}\label{subsec:GeneralizedPathGraphs}

Let $n\geq d+1$. Let $P_{n,d}$ be the graph on vertex set $[n]$ with edges \[\left\{ \{i,j\} :\,1\leq i<j\leq n,\,  j-i\leq d\right\}.\] Note that, for $d=1$, $P_n=P_{n,1}$ is just the path with $n$ vertices.
It is not hard to check that, for $n\geq d+1$, $P_{n,d}$ is minimally rigid in $\Rea^d$.

As mentioned in the introduction, Fiedler~\cite{fiedler1973algebraic} showed that $a_1(G)\geq a_1(P_n)=2(1-\cos(\pi/n))$ for every connected graph $G$.
For $d>1$, we do not know the exact value of $a_d(P_{n,d})$, but the following result gives us its order of magnitude:

\begin{proposition}\label{prop:generalized_path_graph}
Let $d\geq 2$ and $n\geq d+1$. Then
\[
    1-\cos\left(\frac{\pi}{2} \ufrac{n}{d}^{-1}\right)
    \leq 
    a_d(P_{n,d})
    \leq
    2d-2\sum_{k=1}^d \cos(2k\pi/n).
\]
For $d=2$ we have a slightly better lower bound, $a_2(P_{n,2})\geq 2(1-\cos(\pi/n))$.
\end{proposition}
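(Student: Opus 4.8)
\textbf{Proof proposal for Proposition \ref{prop:generalized_path_graph}.}
The plan is to prove the two bounds separately, using the interplay between $a_d(P_{n,d})$ and one-dimensional algebraic connectivities that the earlier results make available. For the lower bound, I would apply Theorem \ref{thm:lower_bound_general_d} to a suitable non-trivial partition $V=A_1\cup\cdots\cup A_d$ of $[n]$. The natural choice is the ``residue" partition $A_i=\{j\in[n]:\, j\equiv i\pmod d\}$, since for this partition the induced graphs $G[A_i]$ and the bipartite graphs $G(A_i,A_j)$ inherit a clean path-like structure from the band-width-$d$ edge set of $P_{n,d}$: two vertices $j<j'$ in a fixed residue class are adjacent exactly when $j'-j\le d$, i.e.\ when they are consecutive in the class, so each $G[A_i]$ is a path on $\lceil n/d\rceil$ or $\lfloor n/d\rfloor$ vertices; similarly each $G(A_i,A_j)$ is connected. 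Invoking Fiedler's formula $a(P_m)=2(1-\cos(\pi/m))$ for each block and plugging the worst (largest) block size $\lceil n/d\rceil$ into Theorem \ref{thm:lower_bound_general_d} should yield a lower bound of the form $\tfrac12 a(P_{\lceil n/d\rceil})=1-\cos(\pi\lceil n/d\rceil^{-1})$, matching the claim once one checks that the $G(A_i,A_j)$ contributions are no smaller.

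For the upper bound I would use the relation $a_d(G)\le a(G)$ from Theorem \ref{thm:upper_bound}, which reduces the task to estimating the ordinary algebraic connectivity $a(P_{n,d})$ from above. The graph $P_{n,d}$ is a \emph{circulant-like} band graph, and the cleanest route is to exhibit an explicit test vector orthogonal to the all-ones vector and apply the variational (Rayleigh quotient) characterization $\lambda_2(L)\le x^TL x/\|x\|^2$. I would take $x_j=\cos(2\pi j/n)$ (or the analogous sine vector), which is orthogonal to $\mathbf 1$; for this vector the Rayleigh quotient of $L(P_{n,d})$ evaluates, by the band structure $|j-i|\le d$ together with the standard sum-to-product identity $\sum_j(\cos(2\pi j/n)-\cos(2\pi(j+k)/n))^2$, to exactly $2d-2\sum_{k=1}^d\cos(2k\pi/n)$ after normalization, giving the stated upper bound. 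The boundary terms (the $d$ vertices near each end, whose degrees are smaller than the interior degree $2d$) need to be handled, but since we only need an upper bound via a single test vector, any mild boundary discrepancy can be absorbed or shown to go in the favorable direction.

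The main obstacle I anticipate is the lower bound, specifically verifying that every one of the $\binom{d+1}{2}$ subgraphs in the partition has algebraic connectivity at least the claimed value $2-2\cos(\pi\lceil n/d\rceil^{-1})$ (before the factor $\tfrac12$), and in particular controlling the bipartite pieces $G(A_i,A_j)$. The induced paths $G[A_i]$ are straightforward via Fiedler, but the graphs $G(A_i,A_j)$ are paths or near-paths on up to $2\lceil n/d\rceil$ vertices whose connectivity I must show is not the binding constraint; the factor $\tfrac12$ in Theorem \ref{thm:lower_bound_general_d} is what makes this plausible, since $\tfrac12 a(P_{2m})$ and $a(P_m)$ are of comparable magnitude. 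For the improved $d=2$ bound I would instead invoke Remark \ref{rem:2d}, which removes the $\tfrac12$, so that the binding term becomes $a(G[A_i])=a(P_{\lceil n/2\rceil})$ directly; a short separate computation with the residue partition $A_1=$ odds, $A_2=$ evens then gives $a_2(P_{n,2})\ge 2(1-\cos(\pi/n))$, and I would check the edge cases of small $n$ by hand.
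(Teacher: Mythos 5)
Your proposal follows essentially the same route as the paper: the residue partition $A_i=\{k\in[n]:k\equiv i\bmod d\}$, Fiedler's formula for the resulting path blocks, Theorem \ref{thm:lower_bound_general_d} (and Remark \ref{rem:2d} for $d=2$) for the lower bound, and $a_d\le a$ followed by a circulant-type eigenvalue computation for the upper bound (your test vector $x_j=\cos(2\pi j/n)$ is exactly the relevant eigenvector of the circulant $C_{n,d}\supseteq P_{n,d}$ that the paper uses, and dropping edges only helps, so the boundary issue you worry about is automatically favorable). One correction to your accounting: the binding term in the minimum is the \emph{bipartite} one, not the induced path $G[A_i]$ --- each $G(A_i,A_j)$ is itself a path on up to $2\lceil n/d\rceil$ vertices, so $\tfrac12 a(G(A_i,A_j))\ge 1-\cos\bigl(\tfrac{\pi}{2}\lceil n/d\rceil^{-1}\bigr)$, which is smaller than $a(G[A_i])\ge 2(1-\cos(\pi\lceil n/d\rceil^{-1}))$ and is precisely the stated lower bound (note the $\tfrac{\pi}{2}$, not $\pi$, in the proposition); likewise for $d=2$ the binding term is $a(G(A_1,A_2))=a(P_n)=2(1-\cos(\pi/n))$, since $G(A_1,A_2)$ is a path on all $n$ vertices.
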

For large $n$, we have $1-\cos\left(\frac{\pi}{2}\ufrac{n}{d}^{-1}\right) \approx 1-\cos\left(\frac{d\pi}{2n}\right)\approx \frac{d^2 \pi^2}{8n^2}$.
Moreover, $2d-2\sum_{k=1}^d \cos(2k\pi/n)\leq \frac{2\pi^2 d(d+1)(2d+1)}{3 n^2}$.
Therefore, $a_d(P_{n,d})=\Theta_d(1/n^2)$.

\begin{proof}[Proof of Proposition \ref{prop:generalized_path_graph}]
We begin with the lower bound. Let $A_i= \{k\in[n]:\, k\equiv i \mod d\}$ for $i=1,\ldots,d$. For $i\in[d]$, let $n_i=|A_i|$ and, for $1\leq i<j\leq d$, let $n_{ij}=|A_i|+|A_j|$. Note that $n_i\leq \ufrac{n}{d}$ and $n_{ij}\leq 2\ufrac{n}{d}$ (and, for $d=2$, $n_{12}=n$).

It is easy to check that $G[A_i]\cong P_{n_i}$ for all $i\in[d]$, and $G(A_i,A_j)\cong P_{n_{ij}}$ for all $1\leq i<j\leq d$. Since, for $n\leq n'$, $a(P_n)=2(1-\cos(\pi/n))\geq 2(1-\cos(\pi/n'))=a(P_{n'})$, we obtain
\[
    a(G[A_i])\geq 2\left(1-\cos\left(\pi\ufrac{n}{d}^{-1}\right)\right)
\]
for all $i\in[d]$, and
\[
    a(G(A_i,A_j))\geq 2\left(1-\cos\left(\frac{\pi}{2}\ufrac{n}{d}^{-1}\right)\right)
\]
for $1\leq i<j\leq d$. For $d=2$ we can give the slightly better bound $a(G(A_1,A_2))\geq 2(1-\cos(\pi/n))$. 
For $d\geq 3$ we obtain by Theorem \ref{thm:lower_bound_general_d}
\[
    a_d(P_{n,d})\geq 1-\cos\left(\frac{\pi}{2}\ufrac{n}{d}^{-1}\right),
\]
and for $d=2$, we obtain by Remark \ref{rem:2d}
\[
    a_2(P_{n,2})\geq 2(1-\cos(\pi/n)).
\]

Finally, we prove the upper bound: by Theorem \ref{thm:upper_bound} we have $a_d(P_{n,d})\leq a(P_{n,d})$. Let $C_{n,d}$ be the graph on vertex set $[n]$ with edges \[\{\{i,j\}:\, 1\leq i<j\leq n,\, j-i\leq d \text{ or } n-j+i\leq d\}.\]
We can think of $C_{n,d}$ as a generalized cycle (and indeed, $C_{n,1}$ is the cycle graph $C_n$). Note that $P_{n,d}$ is a subgraph of $C_{n,d}$, and therefore $a(P_{n,d})\leq a(C_{n,d})$. The Laplacian matrix of $C_{n,d}$ has entries
\[
    L(C_{n,d})_{i,j}=\begin{cases}
                        2d & \text{ if } i=j,\\
                        -1 & \text{ if } |j-i|\leq d \text{ or } n-|j-i|\leq d,\\
                        0 & \text{ otherwise.}
    \end{cases}
\]
This is a circulant matrix, and therefore its eigenvalues are (see e.g. \cite{gray2006toeplitz})
\[
   \left\{ 2d- 2\sum_{i=1}^d \cos(2\pi m k /n):\, m=0,\ldots,n-1\right\}.
\]
We obtain
\[
a_d(P_{n,d})\leq a(C_{n,d})=2d- 2\sum_{i=1}^d \cos(2\pi k /n).
\]
\end{proof}

\section{Concluding remarks}\label{sec:concluding}
Many fascinating open problems suggest themselves. In this paper, we showed that families of $k$-regular $d$-rigidity expanders exist for $k>2d$, and it is natural to seek for the best possible construction.
\begin{problem}
Let $d \ge 2$ and $k>2d$ be integers. What is 
$$
c_d(k) := \sup_{(G_n)_{n\in\mathbb N}}\liminf_n a_d(G_n),
$$
where $(G_n)_{n\in\mathbb N}$ runs over families of $k$-regular graphs of increasing size?
\end{problem}
The $1$-dimensional case of this problem is perhaps the most important question in the theory of expander graphs. The Alon-Boppana Theorem asserts an upper bound of $c_1(k)\le k-2\sqrt{k-1}$, and constructions attaining this bound are known as (one-sided) Ramanujan graphs~\cite{LPS,MSS}. 
For $d\ge 2$, the proof of Theorem \ref{thm:rigidity_expanders} gives a lower bound for $c_d(k)$ which applies to all $k\ge 2d+1$, and whose rate of decay is in the order of $1/d^2$ as $d\to\infty$. If $k\ge td$ for some $t\geq 3$, one can easily adapt our methods and attain a lower bound for $c_d(k) \ge c_1(t)/2$ that is independent of $d$. That is, by using $t$-regular bipartite Ramanujan graphs in Section~\ref{sec:rigidity_expanders} instead of the subdivided graphs from Corollary \ref{cor:expanders_with_many_degree_2_vertices}. The question whether $c_d(2d+1)\to 0$ as $d\to\infty$ remains open.

It is known that $a(T) = O(1/n)$ if $T$ is a bounded-degree tree ~\cite{kolokolnikov2015maximizing}, and we conjecture that this phenomenon extends to higher dimensions.
\begin{conjecture}~\label{conj:percent-ev}
Fix integers $d,b\ge 1$. Then,
\[
\max_{G_n} a_d(G_n) \to 0~~~\mbox{as }n\to\infty,
\]
where $G_n$ runs over all minimally $d$-rigid $n$-vertex graphs of max-degree $b$.
\end{conjecture}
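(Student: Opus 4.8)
The plan is to first use the \emph{minimality} of $G$ to collapse the definition of $a_d$ into a statement about a single extremal eigenvalue of the rigidity matrix, and then to attack that statement by exhibiting, for every realization $p$, an explicit near-self-stress of vanishing energy.

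\medskip

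\noindent\emph{Reduction.} For a minimally $d$-rigid graph $G$ on $n$ vertices we have exactly $|E|=dn-\binom{d+1}{2}$ edges, so \eqref{eqn:lam_k} with $k=\binom{d+1}{2}+1$ collapses to $\lambda_{\binom{d+1}{2}+1}(L(G,p))=\lambda_1(L^-(G,p))$. By Lemma~\ref{lemma:a_d_equivalent} it suffices to range over injective $p$, for which $L^-(G,p)=R(G,p)^TR(G,p)$ has all diagonal entries $2$ (Lemma~\ref{lemma:down_laplacian}) and $\lambda_1(L^-(G,p))=\sigma_{\min}(R(G,p))^2$. Hence
\[
a_d(G)=\sup_{p \text{ injective}}\sigma_{\min}(R(G,p))^2,
\]
and the conjecture becomes the assertion that, over all max-degree-$b$ minimally $d$-rigid $n$-vertex graphs, $\sup_p\sigma_{\min}(R(G,p))^2\to 0$. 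Since the columns $\bv_e$ all lie in the $(dn-\binom{d+1}{2})$-dimensional space orthogonal to the trivial infinitesimal motions, and there are exactly $dn-\binom{d+1}{2}$ of them, they form a basis of that space; the task is to show this basis is ill-conditioned, i.e.\ to produce a unit $\omega\in\Rea^E$ (a near-self-stress) with $\|R(G,p)\omega\|^2=o_n(1)$, uniformly in $p$.

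\medskip

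\noindent\emph{Local dependencies.} I would extract raw material from bounded degree together with minimality. The minimum degree is at least $d$ (a vertex with fewer than $d$ edges flexes), and the degree count $\sum_v(\deg v-d)=2|E|-dn=d(n-d-1)$ forces at least $d(n-d-1)/(b-d)=\Omega_{d,b}(n)$ vertices of degree $\ge d+1$. At each such vertex $u$ the $\deg u>d$ unit directions $\{d_{uv}\}_{v\sim u}$ are linearly dependent in $\Rea^d$, yielding a unit vector $c^{(u)}$ supported on the edges at $u$ with $\sum_{v\sim u}c^{(u)}_{uv}\,d_{uv}=0$, that is $(Rc^{(u)})(u)=0$. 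Each $c^{(u)}$ is a \emph{local} stress whose residual $Rc^{(u)}$ is supported only on $N(u)$ and satisfies $\|Rc^{(u)}\|^2=\sum_{v\sim u}(c^{(u)}_{uv})^2=1$; thus no single gadget is itself a good near-stress, since its residual is comparable to its norm.

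\medskip

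\noindent\emph{Global assembly (the hard part).} The remaining step is to combine $\Omega(n)$ local stresses into $\omega=\sum_u\alpha_u c^{(u)}$ so that residuals cancel: the residual at a vertex $w$ is $(R\omega)(w)=\sum_{u\sim w}\alpha_u c^{(u)}_{uw}\,d_{wu}$, a sum over the high-degree neighbors $u$ of $w$, and one wants to choose the scalars $\alpha_u$ (and signs inside each $c^{(u)}$) so that these telescope to a vector of norm $o(\sqrt n)=o(\|\omega\|)$. In the case $d=1$ (trees) this is exactly the path-type cancellation behind $a(P_n)=\Theta(1/n^2)$, and the generalized paths $P_{n,d}$ show that the analogous chaining can succeed with energy $\Theta_d(1/n^2)$; the content of the conjecture is that some such chaining exists for \emph{every} bounded-degree minimally $d$-rigid $G$ and \emph{every} $p$. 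The main obstacle is precisely this uniformity over $p$ intertwined with the combinatorics of $G$: when $G$ is combinatorially expander-like, the local residuals must be routed and cancelled through a dense structure, and guaranteeing cancellation for an adversarial realization $p$ is the crux. I expect the natural framework is to phrase the assembly as a telescoping/flow problem on an auxiliary graph whose nodes are the high-degree sites and whose edges record shared residual vertices, reducing the construction to finding a nontrivial ``almost-harmonic'' vector of small energy on that object; a successful resolution should also illuminate the conjectured non-existence of $2d$-regular $d$-rigidity expanders, to which this question is closely tied.
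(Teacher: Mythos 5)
First, a point of orientation: the statement you were asked about is Conjecture~\ref{conj:percent-ev}, which the paper does \emph{not} prove --- it is posed as an open problem (the paper only remarks that a stronger variant would rule out $2d$-regular $d$-rigidity expanders via eigenvalue interlacing). So there is no ``paper proof'' to match; the only question is whether your argument closes the conjecture, and it does not.

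Your reduction is sound as far as it goes: for a minimally $d$-rigid graph, $|E|=dn-\binom{d+1}{2}$, so \eqref{eqn:lam_k} together with Lemma~\ref{lemma:a_d_equivalent} indeed turns the conjecture into a uniform upper bound on $\sigma_{\min}(R(G,p))^2$ over injective $p$ (with the minor caveat that the columns of $R(G,p)$ form a basis of the orthocomplement of the trivial motions only when $(G,p)$ is infinitesimally rigid; otherwise $\sigma_{\min}=0$ and there is nothing to prove). The degree count giving $\Omega_{d,b}(n)$ vertices of degree at least $d+1$, and the local stress gadgets $c^{(u)}$ at those vertices, are also correct. But the ``global assembly'' step is not a step of a proof --- it is a restatement of the conjecture, as you yourself acknowledge. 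Concretely, here is why nothing has been gained at that point: if the gadgets $c^{(u)}$ have pairwise disjoint edge supports and pairwise disjoint residual supports (which happens, e.g., when the high-degree vertices are pairwise at distance at least $3$), then for any coefficients $\alpha_u$ one gets $\|R\omega\|^2=\sum_u\alpha_u^2=\|\omega\|^2$, i.e.\ zero cancellation and no decay whatsoever. So the entire burden lies in producing cancellation, which requires (i) the high-degree sites to interact combinatorially, and (ii) the directions $d_{wu}$ at shared vertices $w$ to admit a cancelling combination --- a geometric condition that an adversarial $p$ is precisely trying to destroy. You propose to phrase this as a flow problem on an auxiliary graph but give neither the formulation nor any argument that the flow exists; this is exactly the open content of the conjecture, not a missing routine verification. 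In short: correct preprocessing, but the theorem-shaped part of the proposal is absent, and the proposal should be regarded as a (reasonable) research plan rather than a proof.
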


A  stronger but still plausible conjecture --- that $$\sup_{(G_n,p_n)}\lambda_{d(d+1)+1}(L(G_n,p_n))\to 0~~~\mbox{as }n\to\infty,$$ where 
$(G_n,p_n)$ runs over all $d$-frameworks of minimally $d$-rigid $n$-vertex graphs of maximum degree $b$ --- would imply that no $2d$-regular $d$-rigidity expanders exist, via interlacing of spectra under adding an edge to a graph (see \cite[Theorem 2.3]{lew2022d}).

Regarding the relations between the values $a_d(G)$, for $G$ fixed and $d$ that varies, we propose the following strengthening of Theorem~\ref{thm:upper_bound}:

\begin{conjecture}(Monotonicity)
Let $1\le d'<d$ be integers and $G$ a graph on $n$ vertices. Then,
$a_d(G) \le a_{d'}(G)$.
\end{conjecture}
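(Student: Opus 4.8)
The plan is to prove the one-step inequality $a_d(G)\le a_{d-1}(G)$ and then chain it to obtain $a_d(G)\le a_{d'}(G)$ for all $d'<d$. Fix $d\ge 2$; by Lemma~\ref{lemma:a_d_equivalent} it suffices to bound $\lambda_{\binom{d+1}{2}+1}(L(G,p))$ by $a_{d-1}(G)$ for every injective $p:V\to\Rea^d$. If $(G,p)$ is not infinitesimally rigid the left-hand side is $0$, so assume rigidity; then $K:=\ker L(G,p)$ is exactly the $\binom{d+1}{2}$-dimensional space of trivial motions. By the variational characterization of eigenvalues it is enough to produce a single vector $q\notin K$ with $q^{T}L(G,p)q\le a_{d-1}(G)\,\|q_{K^\perp}\|^{2}$, where $q_{K^\perp}$ is the component of $q$ orthogonal to $K$: indeed, on $W:=K\oplus\Rea q$ (dimension $\binom{d+1}{2}+1$) the Rayleigh quotient is then at most $q^{T}L(G,p)q/\|q_{K^\perp}\|^{2}\le a_{d-1}(G)$, which forces $\lambda_{\binom{d+1}{2}+1}(L(G,p))\le a_{d-1}(G)$. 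Taking the supremum over $p$ gives the claim.

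I build the candidate $q$ by \emph{projection and lifting}. Fix a unit vector $z\in S^{d-1}$, let $H=z^{\perp}$, let $\pi:\Rea^d\to H$ be the orthogonal projection, and set $p'=\pi\circ p:V\to H\cong\Rea^{d-1}$. Let $w'$ be a unit eigenvector of $L(G,p')$ for the eigenvalue $\lambda_{\binom{d}{2}+1}(L(G,p'))\le a_{d-1}(G)$, chosen orthogonal to $\ker L(G,p')$, and lift it to $\tilde w\in\Rea^{dn}$ by regarding each $w'(i)\in H\subset\Rea^d$. Writing $d'_{ab}$ for the normalized edge directions of $p'$ and $c_{ab}=\|p'(a)-p'(b)\|/\|p(a)-p(b)\|\in[0,1]$, one has $\pi(d_{ab})=c_{ab}\,d'_{ab}$, so Lemma~\ref{lemma:quadratic_form} gives
\[
\tilde w^{T}L(G,p)\,\tilde w=\sum_{\{a,b\}\in E}c_{ab}^{2}\,\langle w'(a)-w'(b),\,d'_{ab}\rangle^{2}\ \le\ w'^{T}L(G,p')\,w'\ \le\ a_{d-1}(G),
\]
while $\|\tilde w\|=\|w'\|=1$. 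Moreover $\tilde w$ is automatically orthogonal to every translation (since $w'\perp\ker L(G,p')$ forces $\sum_i w'(i)=0$) and to every infinitesimal rotation of $(G,p)$ preserving $H$ (each such rotation restricts to an infinitesimal rotation of $(G,p')$, to which $w'$ is orthogonal). Hence $\tilde w\perp K_0$, where $K_0\subset K$ is the subspace spanned by these trivial motions, of codimension $d-1$ in $K$.

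The decisive difficulty is the component of $\tilde w$ along the complementary $(d-1)$-dimensional space $K_{\mathrm{mix}}\subset K$ of \emph{mixing} rotations coupling $H$ with the axis $\Rea z$. A direct calculation identifies this obstruction with the single tangent vector $g(z)=\sum_i\langle p(i),z\rangle\,w'(i)\in H=T_zS^{d-1}$: one checks that $\tilde w\perp K_{\mathrm{mix}}$ exactly when $g(z)=0$, in which case $q=\tilde w$ satisfies $\|q_{K^\perp}\|^2=1$ and the argument closes. Thus $g$ is an antipodally odd tangent vector field on $S^{d-1}$ (one may take $w'_{-z}=w'_z$, giving $g(-z)=-g(z)$), and the task becomes to find a zero of it. For \emph{odd} $d$ the Poincar\'e--Hopf (hairy-ball) theorem forces a zero of any continuous tangent field on $S^{d-1}$, so there the remaining work is to cope with the discontinuity of $w'$ at eigenvalue crossings, e.g.\ by a perturbation or measurable-selection argument. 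For \emph{even} $d$, tangent fields on $S^{d-1}$ can be nowhere vanishing and this topological route fails; in that case I would instead aim at the pointwise \emph{compensation inequality}
\[
\sum_{\{a,b\}\in E}\langle d_{ab},z\rangle^{2}\,\langle w'(a)-w'(b),\,d'_{ab}\rangle^{2}\ \ge\ a_{d-1}(G)\,\|k_{\mathrm{mix}}(z)\|^{2},
\]
whose left-hand side is precisely the edge-shrinkage term $\sum_{\{a,b\}}(1-c_{ab}^{2})\langle w'(a)-w'(b),d'_{ab}\rangle^{2}$ discarded in the bound above (using $1-c_{ab}^{2}=\langle d_{ab},z\rangle^{2}$). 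Establishing this inequality, most plausibly in an averaged form over uniformly random $z\in S^{d-1}$ in the spirit of the proof of Theorem~\ref{thm:upper_bound}, would show that the length lost to projection exactly pays for the spurious mixing component, yielding $q^{T}L(G,p)q\le a_{d-1}(G)\|q_{K^\perp}\|^2$. I expect this compensation step to be the main obstacle, and its delicacy is presumably why the inequality appears here only as a conjecture.
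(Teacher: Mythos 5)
First, a point of calibration: the statement you were asked to prove is stated in the paper as an \emph{open conjecture}; the paper proves only the case $d'=1$ (Theorem~\ref{thm:upper_bound}), so there is no proof in the paper to compare yours against, and a complete argument from you would have resolved an open problem. Your attempt does not do so, and you say as much yourself. The sound part of your proposal: the min--max reduction (it suffices to exhibit $q\notin K$ with $q^{T}L(G,p)q\le a_{d-1}(G)\|q_{K^{\perp}}\|^{2}$), the projection $p'=\pi\circ p$, the estimate $\tilde w^{T}L(G,p)\tilde w=\sum_{ab}c_{ab}^{2}\langle w'(a)-w'(b),d'_{ab}\rangle^{2}\le a_{d-1}(G)$, and the identification of the obstruction with the tangent vector $g(z)=\sum_{i}\langle p(i),z\rangle w'(i)$ are all correct computations (modulo a small fix: when $\lambda_{\binom d2+1}(L(G,p'))=0$ you cannot take $w'$ orthogonal to the full kernel of $L(G,p')$; you must take it orthogonal only to the trivial motions of $(G,p')$). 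This is a natural strategy in the spirit of the paper's proof of Theorem~\ref{thm:upper_bound}, which also averages or optimizes over a direction $z\in S^{d-1}$.

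The gaps, however, are exactly where the content of the conjecture lies, and they are genuine. (1) Even in the favorable case of odd $d$ (so that $S^{d-1}$ is even-dimensional and the hairy-ball theorem applies), your field $g$ is not a well-defined continuous tangent field: $w'$ is an eigenvector of $L(G,p'_{z})$ whose eigenspace can have multiplicity on a positive-codimension (or worse) set of directions $z$, and across such crossings no continuous selection of $w'$ exists in general. Poincar\'e--Hopf requires continuity; a ``measurable selection'' is topologically useless here, and the perturbation argument you wave at is not supplied and is not routine (one must perturb $p$, $z$, or the eigenvector simultaneously while preserving the inequality and the orthogonality structure). (2) For even $d$ you concede the topological route fails outright, and you replace it by a ``compensation inequality'' bounding the discarded shrinkage term $\sum_{ab}(1-c_{ab}^{2})\langle w'(a)-w'(b),d'_{ab}\rangle^{2}$ below by $a_{d-1}(G)$ times the squared norm of the mixing component. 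You give no proof and no evidence for this; note also that the mixing component of $\tilde w$ is related to $g(z)$ only through the Gram matrix of the rotational trivial motions of $(G,p)$, which depends on $p$ and can be arbitrarily ill-conditioned, so even formulating the inequality correctly requires care. As it stands, your proposal is a reasonable research plan that reduces the conjecture to two unproven claims, not a proof; the fact that these claims resist elementary attacks is presumably why the monotonicity statement remains a conjecture in the paper.
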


In addition, the fact that we do not know if the bound in Theorem \ref{thm:upper_bound} is tight raises the following problem:
\begin{problem}
    What is $\sup_G (a_d(G) / a(G))$ over all connected graphs $G$?
\end{problem}
The best lower bound that we currently have for this problem that applies to every $d$ is $1/d$ which is given by the generalized star graph $S_{n,d}$. Indeed,  $a_d(S_{n,d})=1$ and $a(S_{n,d})=d$.
For the special case $d=2$ we obtained by computer calculations $a_2(P_{12,2})\geq 0.667\cdot a(P_{12,2})$ (where $P_{n,d}$ is the generalized path graph). It remains a possibility that Theorem \ref{thm:upper_bound} is tight, and we suspect that generalized paths might be the extremal examples. 


\section*{Acknowledgements}
Part of this research was done while A.L. was a postdoctoral researcher at the Einstein Institute of Mathematics at the Hebrew University.

\bibliographystyle{abbrv}
\bibliography{biblio}

\end{document}